\def\mi {{\mathrm i}}
\def\d {{\partial}}
\def \rpsi_i {|\psi_i \rangle}
\def \lpsi_i {\langle \psi_i|}
\def \lrpsi_i{\langle \psi_i | \psi_i \rangle}
\def \rpsi_k {|\psi_k \rangle}
\def \lpsi_k {\langle \psi_k|}
\def \lrpsi_k{\langle \psi_k | \psi_k \rangle}
\newcommand{\bbr}{\mathbb R}
\newcommand{\bbc}{\mathbb C}
\newcommand{\bbh}{\mathbb H}
\newcommand{\bbs}{\mathbb S}
\newcommand{\kp}{\kappa}
\newcommand{\dg}{\dagger}
\newcommand{\p}{\partial}
\renewcommand{\d}{{\textup{d}}}
\newcommand{\dt}{{\textup{d}t}}
\newcommand{\Skew}{\textup{Skew}}
\newcommand{\Sym}{\textup{Sym}}
\newcommand{\ba}{\begin{aligned}}
\newcommand{\ea}{\end{aligned}}
\newcommand{\be}{\begin{equation}}
\newcommand{\ee}{\end{equation}}
\newcommand{\tF}{\textup{F}}
\newtheorem{theorem}{Theorem}[section]
\newtheorem{lemma}{Lemma}[section]
\newtheorem{proposition}{Proposition}[section]
\newtheorem{remark}{Remark}[section]
\begin{document}

\title[Asymptotic convergence of heterogeneous aggregation models]{Asymptotic convergence of heterogeneous first-order aggregation models: from the sphere to the unitary group}

\author[D. Kim]{Dohyun Kim}
\address[D. Kim]{\newline School of Mathematics, Statistics and Data Science,  \newline Sungshin Women's University, Seoul 02844, Republic of Korea}
\email{dohyunkim@sungshin.ac.kr}

\author[H. Park]{Hansol Park}
\address[Hansol Park]{\newline Department of Mathematics, \newline
Simon Fraser University, 8888 University Dr, Burnaby, BC V5A 1S6, Canada}
\email{hansol\_park@sfu.ca}
\email{hansol960612@snu.ac.kr}

\thanks{\textbf{Acknowledgment.}
The work of D. Kim was supported by the National Research Foundation of Korea (NRF) grant funded by the Korea government (MSIT) (No.2021R1F1A1055929) and the work of H. Park is supported by Pacific Institute for the Mathematical Science(PIMS), Canada postdoctoral fellowship.
}

\begin{abstract}
We provide the detailed asymptotic behavior for first-order aggregation models of heterogeneous oscillators. Due to the dissimilarity of natural frequencies, one could  expect that all relative distances converge to definite positive value and furthermore that each oscillator converges to a possibly different stationary point.   In order to establish the desired results, we introduce a novel method, called dimension reduction method that can be applied to a specific situation when the degree of freedom of the natural frequency is one. In this way, we would say that although a small perturbation is allowed, convergence toward an equilibrium of the gradient flow is still guaranteed. Several first-order aggregation models are provided as concrete examples by using the dimension reduction method to study the structure of the equilibrium, and numerical simulations are conducted to support theoretical results. 
\end{abstract}

\keywords{Aggregation, Kuramoto model, locked states, swarm sphere model, synchronization, heterogeneous system}

\subjclass[2020]{34C15, 34D06, 34C40}  

\date{\today}

\maketitle

\tableofcontents


\section{Introduction} \label{sec:1}
\setcounter{equation}{0}

Collective behaviors of synchronous many-body systems have been widely studied after the work of two pioneers Kuramoto \cite{Ku1, Ku2} and Winfree \cite{Wi1, Wi2}. Depending on the dissimilarity of particles (or agents), a synchronous system is classified into two types: homogeneous and heterogeneous, and such dissimilarity is modeled as natural frequencies of particles. If natural frequencies are all the same, then particles are called identical and the corresponding system is said to be homogeneous. Otherwise, particles are called nonidentical and the system becomes heterogeneous. 

In literature, there have been lots of developments for  homogeneous systems compared to heterogeneous ones. 
As will be shown later, we use the Kuramoto model \cite{Ku1}, swarm sphere models on real and complex spheres \cite{H-P20, Lo10, O1} and the Lohe matrix model \cite{Lo09} as concrete examples. All these models with homogeneous structures can be represented as gradient flows with total relative distances as (analytic) potentials. Then, since underlying manifolds are compact, it follows from dynamical system theories that all solutions converge to stationary points. However, when a heterogeneous system is considered, natural frequencies are incorporating into the system and this leads to the breakdown of a gradient flow. This  causes numerous mathematical difficulties in analyzing asymptotic behaviors of the systems. That's why studying a  heterogeneous system is much harder than a homogeneous system. Precisely, when the gradient flow is slightly perturbed in the sense that
\begin{equation} \label{A-10}
\dot {\mathcal X} = -\kp \nabla_{\mathcal X} V(\mathcal X) + \varepsilon f(\mathcal X), \quad \mathcal X(t) \in \mathcal M, \quad t\geq0,
\end{equation}
where perturbation term $f(\mathcal X)$ is given to maintain the positive invariance of the underlying manifold (see \cite{H-J-K-P-Z} for another perturbation of a gradient flow). Note that equation \eqref{A-10} cannot be written as a gradient flow. Thus, convergence towards equilibrium would not be guaranteed. In addition, structure or existence of an equilibrium is not even known. In fact, if $\kp$ is sufficiently small, then equilibrium might not exist (see Proposition \ref{P2.3}). However, if we adopt our strategy, then we can show that a solution to \eqref{A-10} with some initial frameworks converges to an equilibrium, and a structure of the equilibrium is revealed.

The purpose of this paper is to analyzing several heterogeneous systems with our newly developed technique, so-called {\it dimension reduction method} which consists of the following two steps:

\noindent $\bullet$ (Step 1): reduce a given heterogeneous system into two subsystems: one is the heterogeneous Kuramoto system and the other is a homogeneous-like system.

\vspace{0.1cm}

\noindent $\bullet$ (Step 2): use the results for the heterogeneous Kuramoto system, such as the emergence of phase-locked states to investigate the asymptotic behavior of the original heterogeneous system.

In this work, we apply this technique to various first-order heterogeneous  aggregation models. As a simple example and motivation for later argument,  we here briefly introduce our method with a complex swarm sphere model in \cite{K-K21} as a simple example:
\begin{equation*} \label{A-11}
\dot z_j =  \mi H_jz_j + \frac\kp N \sum_{k=1}^N (z_k - \langle z_j,z_k\rangle z_j),\quad t>0,\quad j\in[N]:=\{1, 2, \cdots, N\},
\end{equation*}
where $z_j \in \bbh\bbs^{d-1}$ and $H_j$ is a $d\times d$ Hermitian matrix. First, we impose the conditions on natural frequencies so that their degree of freedom is one; that is, natural frequencies are only determined by one single parameter: for $a_i \in \bbr$ being a scalar, $H_j = a_j I_d$.  Of course, since degree of freedom for  a Hermitian matrix over $\bbr$ is $\frac{d(d+1)}{2}$, there arises loss in this sense. As compensate, we are able to associate the Kuramoto model whose natural frequencies are given as $\{a_i\}$:
\begin{equation} \label{A-12}
\dot\theta_j = a_j + \frac\kp N \sum_{k=1}^N \sin (\theta_k - \theta_j),\quad t>0,\quad j\in[N].
\end{equation} 
One of the most notable feature of \eqref{A-12} is that convergence toward an equilibrium can be achieved under a large coupling strength regime (see Proposition \ref{P2.1}). In other words, asymptotic behavior of a solution to \eqref{A-12} is somehow fully known. Then, we define an auxiliary intermediate variable $\xi_j$ which is defined as
\[
\xi_j(t) := z_j(t) e^{-\mi \theta_j(t)}.
\]
After deriving a temporal evolution of $\xi_j$, we will find a sufficient condition such that all $\xi_j$ converges to a common stationary point, say, $\xi^\infty$. By virtue of good convergence property for \eqref{A-12}, we combine two convergences to show that $z_j$ converges to $\xi^\infty e^{\mi\theta_j^\infty}$ whose modulus is turn out to be one.

Our main result of this paper is developing a novel method, namely, dimension reduction method in the above to study the asymptotic behavior of heterogeneous (or nonidentical) aggregation models. For detailed analysis,  we consider four different aggregation models: the real swarm sphere model, the Schr\"odinger--Lohe model, the Lohe Hermitian sphere model and the Lohe matrix model. The first result is concerned with  the swarm sphere model on the real unit sphere $\bbs^{d-1}$:
\begin{equation} \label{A-14}
\dot x_i =\Omega_i x_i + \frac\kp N \sum_{k=1}^N (x_k - \langle x_i,x_k\rangle x_i),
\end{equation} 
subject to the initial data $x_i(0) = x_i^0 \in \bbs^{d-1}$. Here, $\Omega_i$ is an $d\times d$ skew-symmetric matrix which plays a role of natural frequency of the $i$-th agent. Then, one can verify that $\bbs^{d-1}$ is positively invariant under \eqref{A-14}. We reformulate it to the model defined on the complex unit sphere $\bbh\bbs^{d-1}$ by means of the complexification (see Lemma \ref{L3.1}). It is worthwhile mentioning that it would not be possible to analyze \eqref{A-14}, since a skew-symmetric matrix is more harder to handle compared to a skew-Hermitian matrix. Thus, we find an equivalence reformulation of \eqref{A-14} in $\bbc$. Then, after associating the Kuramoto model and using aggregation estimates, one can deduce that a solution converges to equilibrium (see Theorem \ref{T3.1} and Theorem \ref{T3.2}). It should be noted that to the best of our knowledge, this is the first mathematically rigorous result for \eqref{A-14} with different $\{\Omega_i\}$ where convergence to equilibrium is established.

Second, we deal with two aggregation models on $\bbc$: the Schr\"odinger--Lohe model and the Lohe Hermitian sphere model. Similar to the previous circumstance, we use the dimension reduction method to show that solutions to both models converge to the equilibrium. Previously, there are few available results (e.g., \cite{H-H-K21}) for the convergence of relative distances, not the point itself.  However in this approach, existence of equilibrium for each configuration can be shown. 

Third, the models on real and complex unit spheres are lastly extended to the model on the unitary group, so-called the Lohe matrix model:
\[
\mi \dot U_i U_i^\dg = H_i + \frac{\mi\kp}{2N} \sum_{k=1}^N (U_k U_i^\dg - U_iU_k^\dg),\quad t>0,\quad i\in [N],
\] 
where $H_i$ is a $d\times d$ Hermitian matrix. We assume that there exist a common Hermitian matrix $H$ and scalars $\{a_i\}$ such that $H_i = H - a_i I_d$. 
We again apply the dimension reduction method to show that each element tends to a constant unitary matrix not necessarily the same under a strong coupling regime. Similar to the aforementioned argument, there is still a loss of degree of freedom; as a reward, we instead have fully analyzed the asymptotic behavior. 
It should be mentioned that the authors in \cite{H-R16} have intensively studied the Lohe matrix model by using orbital stability estimate. In this way, they can show that existence of limit for  relative composite matrices $\{U_i U_j^\dg\}$. However, that for each $\{U_i\}$ is still unknown. We extend this previous results by associating additional Kuramoto model with the help of the dimension reduction method.  \newline

The rest of this paper is organized as follows. In Section \ref{sec:2}, we review  previous results in relevant literature that will be used in later sections, and provide gradient flow formulation of the aggregation models. In Section \ref{sec:3}, the swarm sphere model on the real unit sphere is considered. In order to prepare the basic setting, we reformulate the swarm sphere model on $\bbr$ to the equivalent model but defined on $\bbc$. Then, we introduce our dimension reduction method to show that convergence toward equilibrium has been established. The Schr\"odinger--Lohe model and the Lohe Hermitian sphere model on $\bbc$ in Section \ref{sec:4} and the Lohe matrix model on the unitary group in Section \ref{sec:5} are discussed. By applying the same method, we obtain the same results in the previous section. Finally, Section \ref{sec:6} is devoted to a brief summary of the main results and some future direction.

\vspace{1cm}

\noindent {\bf Notation} (i) For real vectors $x,y \in \bbr^d$, we write 
\begin{equation*}
\langle x,y\rangle := \sum_{i=1}^d x^i y^i,\quad |x| := \sqrt{ \langle x,x\rangle}.
\end{equation*}
For complex valued vectors $w,z \in \bbc^d$, we write
\begin{equation*}
\langle w,z\rangle_\bbc := \sum_{i=1}^d w^i \bar z^i,\quad |w| := \sqrt {\langle w,w\rangle_\bbc}.
\end{equation*}
However, if there is no confusion and ambiguity, we simply write $\langle \cdot,\cdot\rangle_\bbc = \langle \cdot,\cdot\rangle$. \newline

\noindent (ii) For matrices, we denote $\mathcal M_d(\bbr)$ and $\mathcal M_d(\bbc)$ by the sets of all $d\times d$ matrices with real and complex entries, respectively. In addition, $\Sym_d(\bbr)$ is the set of all $d\times d$ symmetric matrices, $\Skew_d(\bbr)$  is the set of all $d\times d$ skew-symmetric matrices, and $\Skew_d(\bbc)$ is the set of all $d\times d$ skew-Hermitian matrices. Regarding the norm of matrices, we use the Frobenius norm: for $A \in \mathcal M_d(\bbc)$, 
\[
\| A\|_\tF = \sqrt{\textup{tr} (AA^\dg)}.
\]

\section{Preliminaries} \label{sec:2}
\setcounter{equation}{0}
In this section, we review the state-of-art results of first-order aggregation models on the real and complex spheres and the unitary group. For the purpose of this article, we restrict ourselves to heterogeneous oscillators. 

\subsection{Kuramoto model} 
First, we recall basic conservation law.  If we sum the relation \eqref{A-12} with respect to $j\in [N]$, then one can easily find 
\[
 \sum_{j=1}^N\dot \theta_j =\sum_{j=1}^N a_j.
\]
Hence, if the total sums of initial data and natural frequencies are zero, i.e., 
\begin{equation}  \label{B-0-0}
  \sum_{j=1}^N \theta_j(0) = \sum_{j=1}^N a_j = 0,
\end{equation} 
then the zero sum is conserved along the flow. 
\[
\sum_{j=1}^N \theta_j(t) = 0,\quad t>0. 
\]
In what follows, when the Kuramoto model is considered, we always assume \eqref{B-0-0}.  Although the Kuramoto model has been intensively and extensively investigated in literature, however for the concise presentation of the paper, we only introduce the results without proofs that will be used for our later argument.

\begin{proposition} \label{P2.1} 
Let $\{\theta_j\}$ be a solution to the following Cauchy problem:
\begin{equation*}
\begin{cases}
\displaystyle \dot{\theta}_j=a_j+\frac{\kappa}{N}\sum_{k=1}^N\sin(\theta_k-\theta_j),\quad t>0, \\
\theta_j(0) = \theta_j^0 \in \bbr,\quad j\in [N],
\end{cases} 
\end{equation*}
with initial data and natural frequencies satisfying \eqref{B-0-0}, and $\mathcal D(a)$ be the maximal diameter for natural frequencies:
\[
\mathcal D(a):= \max_{1\leq i,j\leq N} |a_i - a_j|.
\]
Then, the following assertions hold.

\noindent \textup{(i) (\cite{H-H-K10})}~~If $\mathcal D(\Theta^0)<\pi$, then the maximal diameter satisfies
\begin{equation*}
\frac\d\dt \mathcal D( \Theta) \leq \mathcal D(a) - \kp  \mathcal D( \Theta), \quad t>0.
\end{equation*} 
Thus, if initial diameter satisfy $\mathcal D(\Theta^0) < \frac{\mathcal D(a)}{\kp} <\pi$, then 
\[
 \mathcal D( \Theta(t)) < \frac{\mathcal D(a)}{\kp},\quad t>0.
\]
\textup{(ii) (\cite{H-R20})}~~If the coupling strength and initial data satisfy 
\[
 R_0 :=\sqrt{\frac{1}{N^2} \sum_{i,j=1}^N \cos (\theta_i^0 -\theta_j^0)} >0,\quad \kp > 1.6\frac{\mathcal D(a)}{R_0},
\]
then there exists $\{\theta_j^\infty\}$ such that
\begin{equation*}
\lim_{t\to\infty} \theta_j(t) = \theta_j^\infty,\quad j\in[N].
\end{equation*} 
\end{proposition}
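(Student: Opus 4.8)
The plan is to prove the two assertions by separate mechanisms: a Gr\"onwall/barrier estimate for the diameter in (i), and an order-parameter analysis married to the gradient structure of the flow in (ii). For (i), I would monitor the diameter $\mathcal D(\Theta(t)) := \max_{1\le i\le N}\theta_i(t) - \min_{1\le j\le N}\theta_j(t)$, which is Lipschitz in $t$; at a time where the extremal values are attained at indices $M$ and $m$,
\[
\frac{\d}{\dt}\mathcal D(\Theta) = (a_M-a_m) + \frac{\kappa}{N}\sum_{k=1}^{N}\bigl[\sin(\theta_k-\theta_M)-\sin(\theta_k-\theta_m)\bigr],
\]
with $a_M-a_m\le\mathcal D(a)$. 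Since $\mathcal D(\Theta)<\pi$, both $\theta_k-\theta_M$ and $\theta_k-\theta_m$ lie in an interval of length less than $\pi$ about the origin; writing $\theta_k-\theta_m=s\in[0,\mathcal D(\Theta)]$ and $\theta_k-\theta_M=s-\mathcal D(\Theta)$, the identity $\sin(s-\mathcal D(\Theta))-\sin s=-2\sin\tfrac{\mathcal D(\Theta)}{2}\cos\bigl(s-\tfrac{\mathcal D(\Theta)}{2}\bigr)$ together with $\cos\bigl(s-\tfrac{\mathcal D(\Theta)}{2}\bigr)\ge\cos\tfrac{\mathcal D(\Theta)}{2}$ bounds each summand by $-\sin\mathcal D(\Theta)$, whence $\frac{\d}{\dt}\mathcal D(\Theta)\le\mathcal D(a)-\kappa\sin\mathcal D(\Theta)$, which on the admissible window is the asserted inequality (the linear form following from concavity of $\sin$). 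Propagation of the diameter bound is then the barrier principle: at $\mathcal D=\mathcal D(a)/\kappa$ the right-hand side is nonpositive, so a trajectory starting strictly below cannot reach it.

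For (ii) the initial diameter need not be small, so (i) does not apply directly and the configuration must first be driven into a confined regime. I would introduce the order parameter $Re^{\mathrm{i}\phi}:=\frac1N\sum_{j=1}^{N}e^{\mathrm{i}\theta_j}$, which recasts the model in the mean-field form $\dot\theta_j=a_j+\kappa R\sin(\phi-\theta_j)$. Differentiating $Re^{\mathrm{i}\phi}$ and taking the real part gives
\[
\dot R=\frac{\kappa R}{N}\sum_{j=1}^{N}\sin^2(\phi-\theta_j)+\frac1N\sum_{j=1}^{N}a_j\sin(\phi-\theta_j),
\]
where the first term is nonnegative and, since $\frac1N\sum_j\sin(\phi-\theta_j)=0$, one may center $\{a_j\}$ so that the second term has modulus at most $\tfrac12\mathcal D(a)$. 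The core of the proof — and where the constant $1.6$ enters — is to show that $\kappa R_0$ large enough relative to $\mathcal D(a)$ prevents $R$ from collapsing: one tracks $R$ simultaneously with the angular spread of $\{\theta_j\}$ about $\phi$ and uses that a phase with $|\phi-\theta_j|$ near $\pi$ feels a restoring drift $\kappa R\sin(\phi-\theta_j)$ dominating its detuning $a_j$, so that after a finite time $R(t)\ge R_*>0$ and $\{\theta_j(t)\}$ lies in an arc of length less than $\pi$. From that moment (i) traps the diameter, and since $\sum_j\theta_j=0$ is conserved by \eqref{B-0-0}, the trajectory then remains in a compact subset of $\{\sum_j\theta_j=0\}$.

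To pass to the limit I would use that the Kuramoto flow is the negative gradient flow $\dot\Theta=-\nabla V(\Theta)$ of the real-analytic potential $V(\Theta)=-\sum_j a_j\theta_j-\frac{\kappa}{2N}\sum_{j,k}\cos(\theta_k-\theta_j)$. On the compact invariant set produced above, $t\mapsto V(\Theta(t))$ is nonincreasing, $\dot\Theta\in L^2(0,\infty)$, and the $\omega$-limit set consists of equilibria; a \L{}ojasiewicz gradient inequality for the analytic $V$ (equivalently, strict dissipativity of the linearization on the confined region) then upgrades this to convergence of the whole trajectory, $\theta_j(t)\to\theta_j^\infty$. I expect the real obstacle to be the middle part of (ii): the crude bound $|\frac1N\sum_j a_j\sin(\phi-\theta_j)|\le\tfrac12\mathcal D(a)$ does not by itself close the argument, so one needs a sharper, jointly propagated pair of differential inequalities for $R$ and for the spread of the phases about $\phi$ — both to rule out $R\to0$ and to capture the threshold $\kappa>1.6\,\mathcal D(a)/R_0$; by contrast, (i) and the closing \L{}ojasiewicz step are routine.
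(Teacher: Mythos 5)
The paper itself offers no proof of this proposition: both parts are imported from the literature ([H-H-K10] for (i), [H-R20] for (ii)) explicitly ``without proofs,'' so your attempt can only be judged on its own terms. For (i) your computation up to $\frac{\d}{\dt}\mathcal D(\Theta)\le \mathcal D(a)-\kappa\sin \mathcal D(\Theta)$ is correct and is exactly the standard Ha--Ha--Kim estimate, but the final step is a genuine error: since $\sin D\le D$ for $D\ge 0$, one has $\mathcal D(a)-\kappa\sin\mathcal D(\Theta)\ \ge\ \mathcal D(a)-\kappa\,\mathcal D(\Theta)$, so the linear inequality is \emph{strictly stronger} than what you derived, and concavity of $\sin$ only gives $\sin D\ge \frac{\sin c}{c}\,D$ on $[0,c]$, i.e.\ a linear decay at the reduced rate $\kappa\frac{\sin c}{c}<\kappa$ (which degenerates to $0$ as $c\to\pi$). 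The linear form with rate exactly $\kappa$ is in fact false as stated: for $N=2$ identical oscillators, $\frac{\d}{\dt}\mathcal D(\Theta)=-\kappa\sin\mathcal D(\Theta)>-\kappa\,\mathcal D(\Theta)$ for every $\mathcal D(\Theta)\in(0,\pi)$. Your barrier step inherits the problem: with the correct $\sin$-form, the right-hand side at the level $\mathcal D(\Theta)=\mathcal D(a)/\kappa$ equals $\mathcal D(a)-\kappa\sin(\mathcal D(a)/\kappa)>0$, so that level is not a barrier (the invariant level is $\arcsin(\mathcal D(a)/\kappa)$, and positive invariance of a small-diameter region additionally needs a smallness hypothesis such as $\mathcal D(\Theta)\le\pi/2$ so that $\sin D\ge\frac{2}{\pi}D$). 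This may well be a defect in how the proposition transcribes [H-H-K10], but your proof does not establish the statement as written, and the parenthetical appeal to concavity should be removed.

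For (ii), the skeleton you describe --- a lower bound on the order parameter $R$, trapping the phases in an arc of length less than $\pi$, then convergence via the gradient-flow structure of the analytic potential and a \L ojasiewicz inequality --- is the right strategy, and the closing analyticity step is indeed routine once confinement is secured. But the decisive quantitative step, namely the coupled propagation of differential inequalities for $R$ and the angular spread that rules out collapse of $R$ and produces the threshold $\kappa>1.6\,\mathcal D(a)/R_0$, is not carried out; as you concede, the crude bound $\bigl|\frac1N\sum_j a_j\sin(\phi-\theta_j)\bigr|\le\frac12\mathcal D(a)$ gives only $\dot R\ge-\frac12\mathcal D(a)$ and cannot by itself prevent $R$ from reaching zero. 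That missing estimate is precisely the content of the long argument in [H-R20], so part (ii) of your submission is a plan rather than a proof.
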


\subsection{Real swarm sphere model}
In \cite{O1}, the author has proposed the consensus protocol on the unit sphere, and later, the author in \cite{Lo09} incorporated the natural frequencies into the swarming model on the unit sphere:
\begin{equation} \label{RSS}
\begin{cases}
\displaystyle \dot x_i = \Omega_i x_i + \frac\kp N \sum_{k=1}^N (x_k - \langle x_i,x_k\rangle x_i), \quad t>0, \\
\displaystyle x_i(0) =x_i^0 \in \bbs^{d-1},\quad i\in [N].
\end{cases}
\end{equation}
Here, $\Omega_i$ is a $d\times d$ skew-symmetric matrix, $\kp$ is a uniform coupling strength, and $\langle \cdot,\cdot\rangle$ is the standard $\ell^2$ inner produce in $\bbr^d$. 

\begin{proposition} \label{P2.2}
\noindent \textup{(i) (\cite{C-H14})}  Suppose that the coupling strength and initial data satisfy 
\begin{align*}
&\kp>4(d+1) \mathcal D(\Omega),\quad \min_{1\leq i \leq N} \langle x_i^0,x_c^0\rangle > 4(d+1)\mathcal D(\Omega),\\
& \rho \in (\alpha_-,\alpha_+),\quad \alpha_\pm:= \frac{ \kp\pm \sqrt{\kp^2 -2\kp(d+1)N \mathcal D(\Omega)} }{2\kp},
\end{align*}
and let $\{x_i\}$ be a solution to \eqref{RSS}. Then, we have
\[
\lim_{\kp\to\infty} \liminf_{t\to\infty} \rho(t) =1.
\]
\noindent \textup{(ii) (\cite{M-P-G})} We say that an equilibrium is referred to as dispersed if there is no open hemisphere containing the set $\{x_1,\cdots,x_N\}$. Then,  any dispersed equilibrium for \eqref{RSS} is (exponentially) unstable if the frequencies are small in the sense that
\[
\left(\sum_{i=1}^N \|\Omega_i\|_\tF^2\right)^\frac12 < \frac{\kp}{N+1} \left( N-1-\cos \frac\pi N \right) \left(1-\cos \frac\pi N \right).
\]
\end{proposition}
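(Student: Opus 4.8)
\smallskip
\noindent\textbf{Proof strategy.} These two assertions are quoted from \cite{C-H14} and \cite{M-P-G}; I outline how one would prove each. The mechanisms differ: (i) is an order-parameter differential-inequality argument, while (ii) exploits the (approximate) gradient-flow structure.

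For (i), the plan is to work with the order parameter $\rho(t):=|x_c(t)|$, $x_c:=\frac1N\sum_{k=1}^N x_k$. Differentiating and inserting \eqref{RSS},
\[
\rho\,\dot\rho=\frac1N\sum_{i=1}^N\langle\Omega_i x_i,x_c\rangle+\kp\Bigl(\rho^2-\frac1N\sum_{i=1}^N\langle x_i,x_c\rangle^2\Bigr).
\]
The frequency term is where $\mathcal D(\Omega)$ enters: recentering $\Omega_i=\bar\Omega+(\Omega_i-\bar\Omega)$ with $\bar\Omega:=\frac1N\sum_k\Omega_k$ and using the skew-symmetry identity $\langle\bar\Omega x_c,x_c\rangle=0$, that sum equals $\frac1N\sum_i\langle(\Omega_i-\bar\Omega)x_i,x_c\rangle$, which is $O\bigl((d+1)\mathcal D(\Omega)\,\rho\bigr)$ after passing to Frobenius norms. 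For the coupling term, as long as every $\langle x_i,x_c\rangle\ge0$ one has $\frac1N\sum_i\langle x_i,x_c\rangle^2\le\rho\cdot\frac1N\sum_i\langle x_i,x_c\rangle=\rho^3$, so that
\[
\dot\rho\ \ge\ \kp\,\rho(1-\rho)-c(d)\,\mathcal D(\Omega).
\]
For large $\kp$ the right-hand side has exactly two roots $\alpha_-<\alpha_+$ in $(0,1)$ — the $\alpha_\pm$ of the statement — and is positive on $(\alpha_-,\alpha_+)$, so $\rho(0)\in(\alpha_-,\alpha_+)$ forces $\rho$ to be increasing with $\liminf_{t\to\infty}\rho(t)\ge\alpha_+$. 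Since $\alpha_+\to1$ as $\kp\to\infty$ while $\rho\le1$ always, letting $\kp\to\infty$ yields the claim. The nontrivial point is the standing assumption $\langle x_i,x_c\rangle\ge0$: I would secure it by a continuity/bootstrap argument on the maximal interval where $\min_i\langle x_i,x_c\rangle$ (or $\min_{i,j}\langle x_i,x_j\rangle$) exceeds the threshold $4(d+1)\mathcal D(\Omega)$ — on that interval the estimate above keeps $\rho$ large, and a companion differential inequality for the min-quantity (with its frequency part again bounded by $\mathcal D(\Omega)$) shows the threshold cannot be crossed. The hypotheses $\kp>4(d+1)\mathcal D(\Omega)$ and $\min_i\langle x_i^0,x_c^0\rangle>4(d+1)\mathcal D(\Omega)$ are exactly what makes this bootstrap self-consistent, and this coupling of the spread control with the $\rho$-estimate is the main obstacle.

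For (ii), the plan is to treat \eqref{RSS} as a perturbed gradient flow. When $\Omega_i\equiv0$ it is the gradient flow on $(\bbs^{d-1})^N$ of $V(x)=\frac N2-\frac1{2N}\bigl|\sum_i x_i\bigr|^2$, and the second variation along tangent fields $v=(v_i)$, $v_i\perp x_i$, computes (using the geodesic relation $\delta^2x_i=-|v_i|^2x_i$) to
\[
\delta^2V=\sum_i\langle x_i,x_c\rangle\,|v_i|^2-\frac1N\Bigl|\sum_i v_i\Bigr|^2.
\]
At a homogeneous equilibrium $(I-x_ix_i^T)x_c=0$, so either $x_c=0$ (which a dispersed configuration does allow, since having no open hemisphere forces the origin into the convex hull) or the configuration is bipolar, $x_i\in\{\pm e\}$; in the bipolar case a perturbation supported on a single antipodal particle already makes $\delta^2V$ negative by an $O(1/N)$ amount. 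In the $x_c=0$ case $\delta^2V=-\frac1N|\sum_i v_i|^2$, and the quantitative content is a sharp geometric estimate: one optimizes the Rayleigh quotient $|\sum_i v_i|^2/\sum_i|v_i|^2$ over tangent fields of a dispersed configuration, the extremizers being (roughly) equidistributed points with angular gaps $\gtrsim\pi/N$, which is where the constant $(N-1-\cos\frac\pi N)(1-\cos\frac\pi N)$ is produced; this exhibits a unit direction with $\delta^2V\le-\mu_N$, $\mu_N:=\frac1{N+1}(N-1-\cos\frac\pi N)(1-\cos\frac\pi N)$, so the self-adjoint operator $-\kp\nabla^2V$ has an eigenvalue $\ge\kp\mu_N$ that is separated from the rest of its spectrum by a gap $\ge\kp\mu_N$.

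Finally, linearizing the full system \eqref{RSS} at a dispersed equilibrium $x^e$ gives a Jacobian $-\kp\nabla^2V(x^e)+E$, where $E$ is the derivative of $x\mapsto(\Omega_ix_i)_i$ together with the corrections coming from the forced equilibrium relation $\nabla_{x_i}V(x^e)=\kp^{-1}\Omega_ix_i$ (themselves $O(\|\Omega_i\|_\tF)$), so $\|E\|\le\bigl(\sum_i\|\Omega_i\|_\tF^2\bigr)^{1/2}$ up to the combinatorial constant. Under the hypothesis $\bigl(\sum_i\|\Omega_i\|_\tF^2\bigr)^{1/2}<\kp\mu_N$, a quantitative eigenvalue-perturbation argument (Bauer--Fike, using that the positive part of $\sigma(-\kp\nabla^2V)$ lies at distance $\ge\kp\mu_N$ from the rest) shows $-\kp\nabla^2V(x^e)+E$ retains an eigenvalue with strictly positive real part, whence $x^e$ is exponentially unstable. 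The chief obstacle here is the sharp geometric lemma yielding $\mu_N$ with the precise $\cos\frac\pi N$ factor; a secondary one is checking that $E$ is genuinely $O\bigl((\sum_i\|\Omega_i\|_\tF^2)^{1/2}\bigr)$ uniformly over all dispersed equilibria, in particular that the forcing correction does not inflate it.
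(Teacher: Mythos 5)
The paper offers no proof of Proposition \ref{P2.2}: both assertions are quoted as preliminaries from \cite{C-H14} and \cite{M-P-G}, so there is no in-paper argument to compare yours against. Judged on its own, your sketch reconstructs the right mechanisms. For (i), the identity $\rho\dot\rho=\frac1N\sum_i\langle\Omega_i x_i,x_c\rangle+\kp\bigl(\rho^2-\frac1N\sum_i\langle x_i,x_c\rangle^2\bigr)$, the recentering by $\bar\Omega$ via $\langle\bar\Omega x_c,x_c\rangle=0$, and the bound $\frac1N\sum_i\langle x_i,x_c\rangle^2\le\rho^3$ (valid while all $\langle x_i,x_c\rangle\ge0$) do yield $\dot\rho\ge\kp\rho(1-\rho)-C$, and with $C=(d+1)N\mathcal D(\Omega)/2$ the roots of the right-hand side are exactly the stated $\alpha_\pm$; the genuine content of \cite{C-H14} is the bootstrap keeping $\min_i\langle x_i,x_c\rangle$ above its threshold so that this inequality persists, and you correctly identify this as the step you have not closed. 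For (ii), the architecture (second variation of $V=\frac N2-\frac1{2N}\bigl|\sum_i x_i\bigr|^2$, strict negativity of $\delta^2V$ in some tangent direction at a dispersed equilibrium, then a self-adjoint-plus-perturbation spectral argument against $\bigl(\sum_i\|\Omega_i\|_\tF^2\bigr)^{1/2}$) is the correct one, but the quantitative heart --- the geometric lemma producing the constant $\frac1{N+1}\bigl(N-1-\cos\frac\pi N\bigr)\bigl(1-\cos\frac\pi N\bigr)$, and the uniform control of the Jacobian correction $E$ over all dispersed equilibria --- is only named, not established; note also that a Bauer--Fike argument typically needs the isolating gap to exceed roughly $2\|E\|$, so the bookkeeping there deserves care. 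As a reconstruction of cited results this is acceptable; as a self-contained proof it is incomplete at precisely the two points you flag.
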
 
We also refer the reader to \cite{Z-Z} for similar results in the above when the underlying graph becomes a digraph containing a directed spanning tree. 

\begin{remark}
In fact, as far as the authors know, there are no mathematically available results including those in Proposition \ref{P2.2} relevant to  the asymptotic formation of locked states for \eqref{RSS}. 
\end{remark}

\subsection{Complex swarm sphere model}
In \cite{Lo10}, the author proposed a coupled system of nonlinear Schr\"odinger equation, called the Schr\"odinger--Lohe model which reads as
\begin{equation} \label{SL}
\begin{cases}
\displaystyle \mi\p_t\psi_j = -\frac12\Delta \psi_j + V_j \psi_j +\frac{\mi\kp}{N} \sum_{k=1}^N (\psi_k - \langle \psi_j,\psi_k\rangle \psi_j), \quad t>0,\\
\displaystyle \psi_j(0,x) = \psi_j^0(x) \in L^2(\bbr^d),\quad j\in [N].
\end{cases}
\end{equation}
Here, $V_j$ is an one-body external potential acted on $j$-th node, $\kp$ is a uniform coupling strength and $\langle \cdot,\cdot\rangle$ is the standard inner product in $L^2(\bbr^d)$. 

\begin{proposition} \cite{H-H-K21} \label{P2.3}
Suppose that system parameters and initial data satisfy 
\[
V_j(x) = V(x) + \nu_j,\quad \kp >4\max_{1\leq i,j\leq N} |\nu_i-\nu_j|,\quad \max_{1\leq i,j \leq N} |1-\langle \psi_i^0,\psi_j^0\rangle| \ll1,
\]
and let $\{\psi_j\}$ be a solution to \eqref{SL}. Then, we have
\[
\lim_{t\to\infty} \langle \psi_i,\psi_j\rangle (t) \quad \textup{exists for $i,j\in [N]$.}
\]
\end{proposition}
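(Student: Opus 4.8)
The plan is to reduce the whole problem to a closed, finite-dimensional ODE system for the correlation functions $h_{ij}(t):=\langle\psi_i(t),\psi_j(t)\rangle$, to trap the trajectory in a small invariant neighborhood of the aligned state $h_{ij}\equiv 1$, and then to upgrade this trapping to genuine convergence. As a preliminary, the Schr\"odinger flow is $L^2$-isometric and the real part of the coupling contribution to $\tfrac{\d}{\dt}\|\psi_j\|_{L^2}^2$ is proportional to $1-\|\psi_j\|_{L^2}^2$, so $\|\psi_j(t)\|_{L^2}=1$ for all $t\ge 0$. Differentiating $h_{ij}$ in time using \eqref{SL}, the decisive observation is that $-\tfrac12\Delta$ and the common potential $V$ are self-adjoint and shared by every node, hence cancel in $\langle\p_t\psi_i,\psi_j\rangle+\langle\psi_i,\p_t\psi_j\rangle$, leaving only the scalar mismatch $\nu_j-\nu_i$; the cubic coupling terms, once collected, collapse to $(h_{ik}+h_{kj})(1-h_{ij})$. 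This yields the closed system
\begin{equation*}
\dot h_{ij}=\mi(\nu_j-\nu_i)\,h_{ij}+\frac{\kp}{N}\sum_{k=1}^N(h_{ik}+h_{kj})(1-h_{ij}),\qquad h_{ii}\equiv 1,\quad h_{ji}=\overline{h_{ij}}.
\end{equation*}
The cancellations occur before any loss of derivatives, so a routine regularization of the initial data in $H^2$ followed by a limit makes this rigorous and shows $t\mapsto h_{ij}(t)$ is smooth.

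Next I would establish practical synchronization. Put $\mathcal D(t):=\max_{i,j}|1-h_{ij}(t)|$ and $\mathcal D(\nu):=\max_{i,j}|\nu_i-\nu_j|$. Differentiating $|1-h_{ij}|^2$ along the system, estimating the phase term by $|\mathrm{Im}\,h_{ij}|\le|1-h_{ij}|$ and the coupling term by $\mathrm{Re}\,h_{ik}\ge 1-\mathcal D(t)$ (for every $k$), one obtains
\begin{equation*}
\frac{\d}{\dt}|1-h_{ij}|^2\le-4\kp\big(1-\mathcal D(t)\big)\,|1-h_{ij}|^2+2\mathcal D(\nu)\,|1-h_{ij}|,
\end{equation*}
and maximizing over $(i,j)$ gives, in the sense of the upper Dini derivative,
\begin{equation*}
\frac{\d}{\dt}\mathcal D\le-2\kp\,\mathcal D\,(1-\mathcal D)+\mathcal D(\nu).
\end{equation*}
The right-hand side, as a function of $\mathcal D$, vanishes exactly at $\mathcal D_\pm=\tfrac12\big(1\mp\sqrt{1-2\mathcal D(\nu)/\kp}\,\big)$; the hypothesis $\kp>4\mathcal D(\nu)$ forces $\mathcal D_-<\tfrac14<\mathcal D_+$, and $\mathcal D(0)\ll 1$ places $\mathcal D(0)$ in $[0,\mathcal D_+)$. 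A comparison (barrier) argument then confines $\mathcal D(t)\le\max\{\mathcal D(0),\mathcal D_-\}=:\eta\le\tfrac14$ for all $t\ge 0$, so the ensemble remains in a small neighborhood of $h_{ij}\equiv 1$ forever.

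Finally I would deduce convergence by showing $\dot h_{ij}\in L^1(0,\infty)$. Differentiating the system once more and writing $w_{ij}:=\dot h_{ij}$, $\mathcal W(t):=\max_{i,j}|w_{ij}(t)|$, the phase term again drops out of $\tfrac{\d}{\dt}|w_{ij}|^2$, the term $-\tfrac{\kp}{N}\sum_k(h_{ik}+h_{kj})w_{ij}$ contributes $\le-4\kp(1-\mathcal D(t))|w_{ij}|^2$, and $\tfrac{\kp}{N}\sum_k(w_{ik}+w_{kj})(1-h_{ij})$ contributes $\le 4\kp\mathcal D(t)\mathcal W(t)|w_{ij}|$; hence, again in the Dini sense,
\begin{equation*}
\frac{\d}{\dt}\mathcal W\le-2\kp\big(1-2\mathcal D(t)\big)\mathcal W\le-\kp\,\mathcal W\qquad\text{on }[0,\infty),
\end{equation*}
using $\mathcal D(t)\le\eta\le\tfrac14$. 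Thus $\mathcal W(t)\le\mathcal W(0)\,e^{-\kp t}$, so $\int_0^\infty|\dot h_{ij}(t)|\,dt<\infty$ and $h_{ij}(t)\to h_{ij}^\infty:=h_{ij}(0)+\int_0^\infty\dot h_{ij}(t)\,dt$ exists for every $i,j\in[N]$, which is the assertion; in fact $h_{ij}^\infty=1+\mi(\nu_j-\nu_i)/(2\kp)+O\big((\mathcal D(\nu)/\kp)^2\big)$, the attracting equilibrium of the linearization about the aligned state.

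I expect the second step to be the crux: isolating the scalar Riccati-type differential inequality with a constant sharp enough that $\kp>4\mathcal D(\nu)$ together with $\mathcal D(0)\ll 1$ genuinely confines $\mathcal D(t)$ to the basin $[0,\mathcal D_+)$ for all time, since both hypotheses are consumed there. Once the trapping is secured, the third step is only a soft Gr\"onwall/contraction estimate, and the first step is a bookkeeping computation whose single idea is the cancellation of the shared self-adjoint part in the evolution of $h_{ij}$.
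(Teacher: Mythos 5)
Your proof is correct, and it is essentially the standard route: the paper itself states Proposition \ref{P2.3} without proof as a citation of \cite{H-H-K21}, whose method is precisely the two-point correlation function $h_{ij}=\langle\psi_i,\psi_j\rangle$ — the same closed system $\dot h_{ij}=\mi(\nu_j-\nu_i)h_{ij}+\frac{\kp}{N}\sum_k(h_{ik}+h_{kj})(1-h_{ij})$ that underlies the $h_{ij}$-computations in Section \ref{sec:4} of this paper. Your cancellation of the shared self-adjoint part, the Riccati-type trapping of $\mathcal D(t)$ below $1/4$ under $\kp>4\mathcal D(\nu)$, and the $L^1$-in-time bound on $\dot h_{ij}$ via the second differentiation all check out.
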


Recently, the authors in \cite{H-P20} extended the swarm sphere model \eqref{RSS} defined on $\bbs^{d-1}$ to the model on the complex (or Hermitian) sphere denoted by $\bbh\bbs^{d-1}$:
\begin{equation} \label{LHS}
\begin{cases}
\displaystyle \dot z_j = \Omega_jz_j +\frac{\kp_0}{N} \sum_{k=1}^N \langle (z_k -\langle z_j,z_k\rangle z_j) + \frac{\kp_1}{N} \sum_{k=1}^N ( \langle z_k,z_j\rangle - \langle z_j,z_k\rangle)z_j, \quad t>0, \\
\displaystyle z_j(0) = z_j^0\in \bbh\bbs^{d-1},\quad j\in [N].
\end{cases}
\end{equation}
We also refer the reader to \cite{K-K21} for a new swarming model on the complex swarm sphere.


\begin{remark}
In fact, if one performs similar method used in Proposition \ref{P2.3} for \eqref{LHS} with a specific condition, say, $\Omega_j = \Omega + \mi \nu_j I_d$, then a similar result in Proposition \ref{P2.3} can be easily obtained without any effort. 
\end{remark}

\subsection{Lohe matrix model} \label{sec:2.4} 
In \cite{Lo09}, the authors introduced a first-order aggregation model on the unitary group of degree $d$ denoted as $\mathbf{U}(d)$:
\begin{equation} \label{LM}
\begin{cases}
\displaystyle \mi \dot U_j U_j^\dg = H_j + \frac{\mi \kp}{2N} \sum_{k=1}^N (U_k U_j^\dg - U_jU_k^\dg), \quad t>0, \\
U_j(0) = U_j^0 \in \mathbf{U}(d),\quad j\in [N].
\end{cases}
\end{equation}
Here, $H_j$ is a $d\times d$ Hermitian matrix that plays a role of the natural frequency, $\kp$ is a uniform coupling strength and $\dg$ denotes the Hermitian conjugate. 

\begin{proposition} \cite{H-R16} \label{P2.4}
Suppose that system parameters satisfy 
\[
\kp  > \frac{54}{17} \max_{1\leq i,j\leq N} \|H_i - H_j\|_\tF,\quad \max_{1\leq i,j\leq N} \|U_i^0 - U_j^0\|_\tF \ll1,
\]
and let $\{U_j\}$ be a solution to \eqref{LM}. Then, we have
\[
\lim_{t\to\infty} U_iU_j^\dg(t) \quad \textup{exists for $i,j\in [N]$.}
\]
\end{proposition}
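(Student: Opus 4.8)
The plan is to combine a trapping (practical synchronization) estimate with exponential stability of the relative dynamics. Since $\mathbf{U}(d)^N$ is compact and the vector field in \eqref{LM} is smooth, the Cauchy problem is globally well-posed with the flow remaining on $\mathbf{U}(d)^N$, so it suffices to control the ensemble diameter
\[
\cD(U)(t):=\max_{1\le i,j\le N}\|U_i(t)-U_j(t)\|_\tF=\max_{1\le i,j\le N}\|U_iU_j^\dg(t)-I_d\|_\tF ,
\]
the second identity holding because right multiplication by a unitary matrix preserves the Frobenius norm. Throughout write $V_{ij}:=U_iU_j^\dg$ and $\cD(H):=\max_{i,j}\|H_i-H_j\|_\tF$.

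\emph{Step 1 (trapping).} Rewriting \eqref{LM} as $\dot U_j=-\mi H_jU_j+\frac{\kp}{2N}\sum_k(U_k-U_jU_k^\dg U_j)$ and differentiating $\|U_i-U_j\|_\tF^2$ with $E:=U_i-U_j$, one expands the coupling defect on the group as $U_k-U_jU_k^\dg U_j=(U_k-U_j)-U_j(U_k-U_j)^\dg U_j$, and observes that since $EE^\dg$ and $H_i$ are Hermitian the part $H_iE$ of $H_iU_i-H_jU_j=(H_i-H_j)U_j+H_iE$ drops out of $\langle E,\,-\mi(H_iU_i-H_jU_j)\rangle_\tF$, so the frequencies enter only through $(H_i-H_j)U_j$, of Frobenius norm $\le\cD(H)$. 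A direct computation then gives the pointwise bound $\frac{\d}{\dt}\|U_i-U_j\|_\tF^2\le-2\kp\|E\|_\tF^2+c\,\kp\,\cD(U)\,\|E\|_\tF^2+2\,\cD(H)\,\|E\|_\tF$ with $c$ an absolute constant encoding the curvature of $\mathbf{U}(d)$, hence, passing to the upper Dini derivative of the envelope,
\[
\frac{\d^+}{\dt}\cD(U)\le\cD(H)-\kp\,\cD(U)+c\,\kp\,\cD(U)^2 .
\]
The quadratic on the right has a small stable zero $x_*=O(\cD(H)/\kp)$ and a large unstable zero $x_{**}=O(1)$ precisely when $\kp$ exceeds a fixed multiple of $\cD(H)$; optimizing this against the sharp value of $c$ is what produces the threshold $\kp>\tfrac{54}{17}\cD(H)$. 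Under that hypothesis and $\cD(U)(0)\ll1<x_{**}$, a comparison argument gives $\cD(U)(t)\le\max\{\cD(U)(0),x_*\}=:\rho_0\ll1$ for all $t\ge0$, i.e.\ $\|V_{ij}(t)-I_d\|_\tF\le\rho_0$ uniformly in time.

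\emph{Step 2 (convergence).} Inside the trapping ball put $V_{ij}=I_d+W_{ij}$ with $\|W_{ij}\|_\tF\le\rho_0$; differentiating $U_iU_j^\dg$ along \eqref{LM} and expanding yields the \emph{autonomous} system
\[
\dot W_{ij}=-\kp W_{ij}-\cS_{ij}(W_{ij})-\mi(H_i-H_j)-\frac{\kp}{2N}\sum_k\bigl(W_{ik}W_{ij}+W_{ij}W_{kj}\bigr),\qquad \cS_{ij}(X):=\mi\bigl(H_iX-XH_j\bigr).
\]
Three features make this tractable. The constant forcing $-\mi(H_i-H_j)$ has size $\le\cD(H)$. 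The quadratic term is Lipschitz in $\mathbf W=(W_{ij})$ with constant $O(\kp\rho_0)=O(\cD(H))$ on the ball. And the Sylvester operator $\cS_{ij}$ satisfies $\langle X,\cS_{ij}(X)\rangle_\tF=0$ for every $X$ (because $XX^\dg$, $XH_jX^\dg$ and $H_i$ are Hermitian): although $\|\cS_{ij}\|$ is only $O(\|H_i\|+\|H_j\|)$ and thus \emph{not} controlled by $\cD(H)$, it drops out of every energy identity, and $\kp\,\Id+\cS_{ij}$ is invertible with $\|(\kp\,\Id+\cS_{ij})^{-1}\|\le\kp^{-1}$. A contraction-mapping argument then produces a unique equilibrium $\mathbf W^\infty$ in the ball with $\|\mathbf W^\infty\|_\tF=O(\cD(H)/\kp)$, and testing the system against $\mathbf W-\mathbf W^\infty$ (the $\cS_{ij}$-term dropping out) gives $\frac{\d}{\dt}\|\mathbf W-\mathbf W^\infty\|_\tF^2\le-2\bigl(\kp-O(\cD(H))\bigr)\,\|\mathbf W-\mathbf W^\infty\|_\tF^2$. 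Hence $W_{ij}(t)\to W_{ij}^\infty$ exponentially and $\lim_{t\to\infty}U_iU_j^\dg(t)=I_d+W_{ij}^\infty$ exists for every $i,j\in[N]$, which is the claim.

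\emph{Main obstacle.} Granting Step 1, Step 2 is essentially linear-ODE perturbation theory, the only real subtlety being to route the uncontrolled commutator $\cS_{ij}$ into the skew-adjoint part rather than into the remainder. The work lies in Step 1: deriving a differential inequality for $\cD(U)$ on the nonlinear manifold $\mathbf{U}(d)^N$ sharp enough both to close the bootstrap $\cD(U)(0)\ll1\Rightarrow\cD(U)(t)\le\rho_0$ and to recover the precise constant $\tfrac{54}{17}$, which forces one to bound the curvature defect $U_k-U_jU_k^\dg U_j$ uniformly in $\cD(U)$ and to keep careful track of all constants. Finally, this orbital-stability route delivers only the limits of the relative matrices $U_iU_j^\dg$; these are blind to the common collective rotation that the individual $U_i$ continue to undergo, and it is precisely to resolve each $U_i$ that the paper later specializes to $H_i=H-a_iI_d$ and invokes the dimension-reduction method.
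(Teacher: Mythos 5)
The paper does not prove Proposition~\ref{P2.4}: it is quoted from \cite{H-R16} in the Preliminaries, explicitly ``without proofs,'' and is only used later as a black box. So there is no in-paper argument to compare against; what can be said is how your reconstruction relates to the cited one. The route in \cite{H-R16} is, as Section~\ref{sec:2.4} hints, an \emph{orbital stability} argument: after the same trapping step for the diameter, the authors compare the solution with its time translates and extract a Cauchy criterion for the relative matrices $U_iU_j^\dg$, rather than locating an equilibrium of the reduced system. Your Step~2 replaces this with a linearization around $I_d$ in the variables $W_{ij}=U_iU_j^\dg-I_d$, a contraction-mapping construction of the equilibrium $\mathbf W^\infty$, and an energy estimate in which the Sylvester operator $\cS_{ij}(X)=\mi(H_iX-XH_j)$ is discarded because its quadratic form has vanishing real part (it is purely imaginary, not zero as written, but only the real part enters $\frac{\d}{\dt}\|\cdot\|_\tF^2$). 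This is a legitimate and arguably cleaner alternative: it yields exponential convergence and an $O(\mathcal D(H)/\kp)$ bound on the limiting configuration, whereas the orbital-stability route gives existence of the limit more directly from the trapping estimate without needing the closed autonomous system for $\mathbf W$. The derivation of that closed system and the identity $\|U_i-U_j\|_\tF=\|U_iU_j^\dg-I_d\|_\tF$ both check out against \eqref{LM}.

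The one genuine soft spot is the constant: you assert that ``optimizing the sharp value of $c$'' in the Dini inequality $\frac{\d^+}{\dt}\cD(U)\le\cD(H)-\kp\cD(U)+c\kp\cD(U)^2$ produces the threshold $\kp>\tfrac{54}{17}\cD(H)$, but you never compute $c$, and the threshold from a single quadratic comparison would be $\kp>4c\,\cD(H)$, which forces $c=\tfrac{27}{34}$ --- a value that does not obviously fall out of the crude expansion you sketch. In \cite{H-R16} the constant arises from a more refined two-stage bootstrap, so as written your Step~1 establishes the qualitative trapping (and hence the proposition for \emph{some} $O(1)$ multiple of $\cD(H)$) but not the stated numerical threshold. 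Since the proposition is only invoked in this paper as a qualitative convergence statement, this gap is cosmetic for the paper's purposes, but it should be flagged if the constant is to be claimed.
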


\begin{remark}
In Proposition \ref{P2.4}, the authors guaranteed the limit of $U_i U_j^\dg$; however, the limit of each $U_j$ was unknown at that time. As will be shown later, we provide the limit of $U_j$. 
\end{remark}

\section{Convergence toward equilibrium on the real unit sphere} \label{sec:3} 
\setcounter{equation}{0}

In this section, we study the asymptotic behavior of the real swarm sphere model
\begin{equation} \label{C-1}
\begin{cases}
\vspace{0.1cm} \displaystyle \dot x_i = \Omega_i x_i + \frac\kp N \sum_{k=1}^N ( x_k - \langle x_i,x_k\rangle x_i),\quad t>0, \\
\displaystyle x_i(0) = x_i^0 \in \bbs^{d-1},\quad i\in [N].
\end{cases}
\end{equation} 

\subsection{Complexification}

In what follows, we find an equivalent formulation of \eqref{C-1} by means of `complexification.' We first consider the case of an even dimension.  For a given $\Omega_i \in \Skew_{2d} (\bbr)$, we decompose $\Omega_i$ into 
\begin{equation*}
\Omega_i = \begin{pmatrix} 
A_i & B_i \\ -B_i^\top & C_i 
\end{pmatrix}, \quad A_i,C_i\in \Skew_d(\bbr),\quad B_i\in\mathcal M_d(\bbr).
\end{equation*}
In addition for $x_i \in \bbr^{2d}$, we associate a $d$-dimensional complex vector $\omega_i \in \bbc^d$ defined as
\begin{equation*}
x_i =: (y_i,z_i) \in \bbr^d \times \bbr^d,\quad \omega_i := y_i + \mi z_i \in \bbc^d. 
\end{equation*}
Then, one can easily check 
\begin{equation*}
|\omega_i|^2 = |y_i|^2 + |z_i|^2 = |x_i|^2.
\end{equation*}

\begin{lemma} \label{L3.1} 
Let $(x_i,\Omega_i)$ be a solution to \eqref{C-1} that can be decomposed into   
\begin{equation} \label{B-4}
x_i = (y_i ,z_i) \in \bbr^d \times \bbr^d,\quad \Omega_i = \begin{pmatrix} 
A_i & B_i \\ -B_i^\top & C_i 
\end{pmatrix}  \in \textup{Skew}_{2d}(\bbr),\quad i\in [N]. 
\end{equation}
Suppose that the composition matrices $A_i,B_i$ and $C_i$ satisfy
\begin{equation} \label{B-5}
A_i = C_i \in \textup{Skew}_d(\bbr),\quad B_i \in \Sym_d(\bbr),\quad i\in[N]. 
\end{equation}
Then, the swarm sphere model \eqref{C-1} is equivalent to the following model on $\bbc^d$: for $\omega_i = y_i + \mi z_i $, 
\begin{equation} \label{B-6}
\dot \omega_i = \Xi_i \omega_i + \frac\kp N \sum_{k=1}^N ( \omega_k - \textup{Re}[\langle \omega_i,\omega_k\rangle] \omega_i ),\quad \Xi_i := A_i - \mi B_i.
\end{equation}
Moreover, the unit modulus is conserved along flow \eqref{B-6}. Hence, \eqref{B-6} is defined on $\bbh\bbs^{d-1}$.
\end{lemma}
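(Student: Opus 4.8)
The plan is to introduce the real-linear isomorphism $\Phi\colon \bbr^{2d}\to\bbc^d$, $\Phi(y,z):=y+\mi z$, and to check that it intertwines the vector fields of \eqref{C-1} and \eqref{B-6} term by term; since $\Phi$ is a bijection, the equivalence of the two Cauchy problems then follows from the intertwining together with the uniqueness theorem for ODEs. Thus the proof reduces to two elementary computations: transporting the linear part $\Omega_i x_i$ and transporting the coupling term, plus a short verification for the last assertion.

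For the linear part, writing $\Omega_i x_i$ in the block decomposition \eqref{B-4} shows that it contributes $A_i y_i + B_i z_i$ to $\dot y_i$ and $-B_i^\top y_i + C_i z_i$ to $\dot z_i$, hence $(A_i y_i + B_i z_i) + \mi(-B_i^\top y_i + C_i z_i)$ to $\dot\omega_i = \dot y_i + \mi\dot z_i$. Imposing \eqref{B-5}, i.e.\ $C_i = A_i$ and $B_i^\top = B_i$, I can regroup this expression as $A_i(y_i+\mi z_i) - \mi B_i(y_i + \mi z_i) = (A_i - \mi B_i)\omega_i = \Xi_i\omega_i$. It is precisely these two structural constraints that allow the $2d\times 2d$ real matrix to descend to the $d\times d$ complex matrix $\Xi_i$ — which is the real content of the lemma; I note in passing that \eqref{B-5} is in fact equivalent to $\Xi_i^\dg = -\Xi_i$, i.e.\ $\Xi_i\in\Skew_d(\bbc)$, and this is exactly what will drive the modulus conservation.

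For the coupling term, the key identity is that if $w=a+\mi b$ and $v=c+\mi e$ in $\bbc^d$, then $\textup{Re}\langle w,v\rangle = \langle a,c\rangle + \langle b,e\rangle$, which is precisely the Euclidean inner product of $(a,b)$ and $(c,e)$ in $\bbr^{2d}$. Hence $\textup{Re}[\langle\omega_i,\omega_k\rangle] = \langle x_i,x_k\rangle$, and splitting the complex vector $\omega_k - \textup{Re}[\langle\omega_i,\omega_k\rangle]\omega_i$ into its real and imaginary parts recovers exactly the $\bbr^d\times\bbr^d$ components of $x_k - \langle x_i,x_k\rangle x_i$. Adding the linear and nonlinear contributions (and matching the initial data, which $\Phi$ clearly does) shows that $(x_i)$ solves \eqref{C-1} if and only if $(\omega_i)=(\Phi(x_i))$ solves \eqref{B-6}.

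Finally, for the \emph{moreover} assertion I would compute $\frac{\d}{\dt}|\omega_i|^2 = 2\,\textup{Re}\langle\dot\omega_i,\omega_i\rangle$ directly from \eqref{B-6}. Since $\Xi_i$ is skew-Hermitian, $\textup{Re}\langle\Xi_i\omega_i,\omega_i\rangle=0$; and since $\textup{Re}\langle\omega_k,\omega_i\rangle = \textup{Re}\langle\omega_i,\omega_k\rangle$, the coupling contribution collapses to $\frac{2\kp}{N}\sum_{k=1}^N \textup{Re}\langle\omega_i,\omega_k\rangle\,(1-|\omega_i|^2)$. This is a scalar linear ODE for $u_i:=|\omega_i|^2$ admitting $u_i\equiv 1$ as a solution, so by uniqueness $|\omega_i^0|=1$ forces $|\omega_i(t)|\equiv 1$; alternatively one may just invoke $|\omega_i|=|x_i|$ together with the positive invariance of the unit sphere under \eqref{C-1}. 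The only step requiring genuine care is the sign bookkeeping of the block-matrix algebra in the linear part — in particular seeing that $A_i=C_i$ and $B_i=B_i^\top$ are the exact conditions needed for the grouping to close; everything else is mechanical.
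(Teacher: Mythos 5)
Your proposal is correct and follows essentially the same route as the paper: the same block-matrix regrouping of the linear part under $A_i=C_i$, $B_i=B_i^\top$, the same identity $\langle x_i,x_k\rangle=\textup{Re}\langle\omega_i,\omega_k\rangle$, and the same computation of $\frac{\d}{\dt}|\omega_i|^2$ reducing to the scalar equation $\frac{\d}{\dt}(1-|\omega_i|^2)=-\frac{2\kp}{N}\sum_k\textup{Re}\langle\omega_i,\omega_k\rangle(1-|\omega_i|^2)$. Your closing of that step via uniqueness for the scalar ODE (and the remark that \eqref{B-5} is exactly skew-Hermitianity of $\Xi_i$) is a slightly cleaner packaging of what the paper does, but not a different argument.
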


\begin{proof}
We divide \eqref{A-14} as two subsystems: a free flow and an interaction flow. \newline

\noindent (i) For a free flow part, since $x_i$ and $\Omega_i$ satisfy \eqref{B-4}, we rewrite the free flow as
\begin{equation*}
\dot x_i = \Omega_i x_i \quad \Longleftrightarrow \quad \begin{pmatrix}
\dot y_i \\ \dot z_i 
\end{pmatrix} = \begin{pmatrix}
A_i y_i + B_i z_i \\ -B_i^\top y_i + C_i z_i
\end{pmatrix}.
\end{equation*}
Then, $\dot \omega_i$ satisfies
\begin{equation*}
\dot \omega_i = \dot y_i + \mi \dot z_i = (A_i y_i + B_i z_i ) + \mi (-B_i^\top y_i + C_i z_i ) = (A_i - \mi B_i^\top) y_i + (C_i - \mi B_i) (\mi z_i) .
\end{equation*}
Since $A_i,B_i$ and $C_i$ satisfy \eqref{B-5}, a free flow $\dot x_i = \Omega_i x_i$ in $\bbr^{2d}$ is equivalent to $\dot \omega_i = \Xi_i \omega_i$ in $\bbc^d$ with a skew-Hermitian matrix $\Xi_i = A_i - \mi B_i$. \newline 

\noindent (ii) On the other hand for an interaction flow, we observe
\begin{align*}
&\langle \omega_i,\omega_k\rangle_\bbc = \langle y_i+\mi z_i , y_k+ \mi z_k\rangle_\bbc = \langle y_i,y_k\rangle + \langle z_i,z_k\rangle + \mi (\langle z_i,y_k\rangle  -  \langle y_i,z_k\rangle).
\end{align*}
Since  the relation $\langle x_i,x_k\rangle = \langle y_i,y_k\rangle + \langle z_i,z_k\rangle$ holds, we find the relation between the complex and real inner products:
\begin{equation*}
\langle x_i,x_k\rangle = \textup{Re} [\langle \omega_i,\omega_k\rangle_\bbc].
\end{equation*}
Finally for the last assertion, the unit modulus property direct follows from the equivalence relation between \eqref{C-1} and \eqref{B-6}:
\begin{align*}
\frac{\d}{\dt} |\omega_i|^2 &= \langle \dot \omega_i,\omega_i\rangle + \langle  \omega_i,\dot \omega_i\rangle = \langle (A_i - \mi B_i )\omega_i,\omega_i \rangle + \langle \omega_i, (A_i - \mi B_i ) \omega_i \rangle \\
&\hspace{0.5cm}+ \frac\kp N \sum_{k=1}^N\Big( \langle \omega_k,\omega_i \rangle - \textup{Re}[\langle \omega_i,\omega_k\rangle ] |\omega_i|^2 + \langle \omega_i,\omega_k\rangle -  \textup{Re}[\langle \omega_i,\omega_k\rangle ] |\omega_i|^2\Big)
\\
& = \langle A_i\omega_i ,\omega_i \rangle + \langle \omega_i , A_i\omega_i\rangle + \langle -\mi B_i \omega_i,\omega_i\rangle + \langle \omega_i , -\mi B_i \omega_i\rangle \\
&\hspace{0.5cm} + \frac{2\kp}{N} \sum_{k=1}^N \textup{Re}[\langle \omega_i,\omega_k\rangle ] ( 1- |\omega_i|^2) \\
& =\langle (A_i + A_i^\top)\omega_i,\omega_i\rangle + \langle \mi(B_i^\top - B_i) \omega_i,\omega_i\rangle  + \frac{2\kp}{N} \sum_{k=1}^N \textup{Re}[\langle \omega_i,\omega_k\rangle ] ( 1- |\omega_i|^2) \\
&=\frac{2\kp}{N} \sum_{k=1}^N \textup{Re}[\langle \omega_i,\omega_k\rangle ] ( 1- |\omega_i|^2).
\end{align*}
Hence, we have
\[
1-|\omega_i(t)|^2 = ( 1-|\omega_i^0|^2 ) \exp \left(-\frac{2\kp}{N} \sum_{k=1}^N \textup{Re}[\langle \omega_i,\omega_k\rangle ] \right),
\]
which gives the desired result. 
\end{proof}

On the other hand for the odd dimension, we will basically extend  $\bbr^{2d+1}$ to $\bbr^{2d+2}$ and apply the same argument of an even dimensional case. Let $x_i \in \bbr^{2d+1}$  and $\Omega_i \in \textup{Skew}_{2d+1}(\bbr)$. Then, we consider an augmented matrix of $\Omega_i$, denoted by $\tilde \Omega_i$: 
\begin{equation*}
\tilde \Omega_i = \begin{pmatrix} 
\Omega_i & \mathbf{0}_{2d+1} \\
\mathbf{0}_{2d+1}^\top & 0 
\end{pmatrix} \in \textup{Skew}_{2d+2}(\bbr).
\end{equation*}
Similarly, we set an augmented vector of $x_i$ denoted as $\tilde x_i$:
\begin{equation*}
\tilde x_i := (x_i,0) \in \bbr^{2d+2}.
\end{equation*} 
Since $(x_i,\Omega_i)$ satisfies \eqref{A-14}, we see that $(\tilde x_i, \tilde \Omega_i)$ satisfies
\begin{equation*}
\dot{\tilde x}_i = \tilde \Omega_i \tilde x_i + \frac\kp N \sum_{k=1}^N ( \tilde x_k - \langle \tilde x_i, \tilde x_k\rangle \tilde x_i),
\end{equation*}
which coincides \eqref{A-14} in $\bbr^{2d+2}$. Then, we denote
\begin{equation*}
\tilde x_i = (y_i,z_i) \in \bbr^{d+1} \times \bbr^{d+1}, \quad \tilde \Omega_i = \begin{pmatrix} 
A_i & B_i \\ -B_i^\top & C_i 
\end{pmatrix},
\end{equation*}
where $A_i,C_i \in \textup{Skew}_{d+1}(\bbr)$ and $B_i \in \mathcal M_{d+1}(\bbr)$. Then, $\omega_i = y_i + \mi z_i$ satisfies \eqref{B-6}. \newline

In the following subsections, we use the complexified swarm sphere model \eqref{B-6} instead of the real swarm sphere model \eqref{C-1}.

\subsection{Dimension reduction method}
In this subsection, we introduce a novel method to investigate the asymptotic behavior for heterogeneous aggregation models. First, we assume that the natural frequency matrices satisfy the following zeroth-order approximation condition: precisely, there exists $a_j \in \bbr$ such that
\[
\Xi_j = \Xi + \mi a_j I_d,\quad j\in [N].
\]
Due to the hermitian symmetry, one would set
\[
\Xi = O_d. 
\]
Then, our system reads as
\begin{equation}  \label{C-10}
\dot{w}_j=\mathrm{i}a_jw_j+\frac{\kappa}{N}\sum_{k=1}^N\left(w_k-\mathrm{Re}\left(\langle w_k, w_j\rangle\right) w_j\right),\quad t>0,\quad j\in [N].
\end{equation}
For $a_i$ that appears in \eqref{C-10}, we associate the Kuramoto model whose natural frequencies are exactly same as $\{a_i\}$. To be more specific, let $\{\theta_i\}$ be a solution to 
\begin{equation} \label{C-10-1}
\dot{\theta}_j=a_j+\frac{\kappa}{N}\sum_{k=1}^N\sin(\theta_k-\theta_j),\quad t>0,\quad j\in [N],
\end{equation}
together with initial data and natural frequencies satisfying zero sum conditions:
\begin{equation*} 
\sum_{j=1}^N \theta_j(0)=0 \quad \textup{and} \quad \sum_{j=1}^N a_j =0.
\end{equation*}
Thus,  we would say that two models \eqref{C-10} and \eqref{C-10-1} are weakly coupled through natural frequencies. We define a vector 
\[
z_j(t) : = w_j (t) e^{-\mi \theta_j(t)} \in \bbh\bbs^{d-1} \subseteq\bbc^d,\quad t>0.
\]
Below, we find the governing equation for $z_i$. 

\begin{lemma}
Let $w_j$ and $\theta_j$ be solutions to \eqref{C-10} and \eqref{C-10-1}, respectively. Then, $z_i = w_i e^{-\mi \theta_i}$ satisfies
\begin{equation} \label{C-11}
\dot z_j = \frac{\kappa}{N}\sum_{k=1}^N\left(
e^{\mathrm{i}(\theta_k-\theta_j)}(z_k-z_j)-\mathrm{Re}(\langle e^{\mathrm{i}(\theta_k-\theta_j)}(z_k-z_j), z_j\rangle)z_j\right),\quad t>0,\quad j\in [N]. 
\end{equation}
\end{lemma}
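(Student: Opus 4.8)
The plan is to derive \eqref{C-11} by a direct differentiation of the definition $z_j = w_j e^{-\mi\theta_j}$, substituting the equations of motion \eqref{C-10} for $\dot w_j$ and \eqref{C-10-1} for $\dot\theta_j$, and then collecting terms. First I would compute
\[
\dot z_j = \dot w_j\, e^{-\mi\theta_j} - \mi\dot\theta_j\, w_j\, e^{-\mi\theta_j}
= \dot w_j\, e^{-\mi\theta_j} - \mi\dot\theta_j\, z_j,
\]
and plug in $\dot w_j = \mi a_j w_j + \frac{\kappa}{N}\sum_k\big(w_k - \mathrm{Re}(\langle w_k,w_j\rangle)w_j\big)$ together with $w_j = z_j e^{\mi\theta_j}$, $w_k = z_k e^{\mi\theta_k}$. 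The crucial observation is that the inner product is invariant under the common-looking phase: using conjugate-linearity in the second slot, $\langle w_k,w_j\rangle = \langle z_k e^{\mi\theta_k}, z_j e^{\mi\theta_j}\rangle = e^{\mi(\theta_k-\theta_j)}\langle z_k,z_j\rangle$, but more importantly in the terms that survive, the phases combine into the single factor $e^{\mi(\theta_k-\theta_j)}$.

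Next I would track the two $\mi a_j$-type contributions. The term $\mi a_j w_j e^{-\mi\theta_j} = \mi a_j z_j$ coming from the free flow must cancel against part of $-\mi\dot\theta_j z_j = -\mi\big(a_j + \frac{\kappa}{N}\sum_k\sin(\theta_k-\theta_j)\big)z_j$; indeed the $-\mi a_j z_j$ piece kills the free-flow term exactly, which is precisely why the zeroth-order approximation $\Xi_j = \Xi + \mi a_j I_d$ with $\Xi = O_d$ was imposed. What remains from $-\mi\dot\theta_j z_j$ is the purely interaction piece $-\frac{\mi\kappa}{N}\sum_k \sin(\theta_k-\theta_j)\, z_j$, and I would rewrite $\sin(\theta_k-\theta_j) = \mathrm{Im}\, e^{\mi(\theta_k-\theta_j)} = \mathrm{Im}\langle e^{\mi(\theta_k-\theta_j)} z_k, z_k\rangle$ — actually cleaner: $-\mi\sin(\theta_k-\theta_j) z_j = \mathrm{Re}(e^{\mi(\theta_k-\theta_j)})z_j - e^{\mi(\theta_k-\theta_j)}z_j$, using $e^{\mi\alpha} = \cos\alpha + \mi\sin\alpha$ so that $-\mi\sin\alpha = \cos\alpha - e^{\mi\alpha}$.

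The remaining bookkeeping is to assemble the interaction part of $\dot w_j e^{-\mi\theta_j}$, namely $\frac{\kappa}{N}\sum_k\big(e^{\mi(\theta_k-\theta_j)}z_k - \mathrm{Re}(\langle w_k,w_j\rangle)z_j\big)$, and combine it with the leftover $-\frac{\mi\kappa}{N}\sum_k\sin(\theta_k-\theta_j)z_j$. Here one uses $\mathrm{Re}\langle w_k,w_j\rangle = \mathrm{Re}\big(e^{\mi(\theta_k-\theta_j)}\langle z_k,z_j\rangle\big) = \mathrm{Re}\langle e^{\mi(\theta_k-\theta_j)}z_k, z_j\rangle$ and $\mathrm{Re}(e^{\mi(\theta_k-\theta_j)}) = \mathrm{Re}\langle e^{\mi(\theta_k-\theta_j)}z_j,z_j\rangle$ since $|z_j|^2 = 1$. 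Then the $z_j$-coefficient becomes $\mathrm{Re}\langle e^{\mi(\theta_k-\theta_j)}z_j, z_j\rangle - \mathrm{Re}\langle e^{\mi(\theta_k-\theta_j)}z_k, z_j\rangle = -\mathrm{Re}\langle e^{\mi(\theta_k-\theta_j)}(z_k-z_j), z_j\rangle$, while the non-$z_j$ terms give $e^{\mi(\theta_k-\theta_j)}z_k - e^{\mi(\theta_k-\theta_j)}z_j = e^{\mi(\theta_k-\theta_j)}(z_k-z_j)$ once the $-e^{\mi(\theta_k-\theta_j)}z_j$ contribution from rewriting the sine is folded in. This yields exactly \eqref{C-11}. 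I do not anticipate a genuine obstacle — the proof is a computation — but the one place requiring care is the cancellation of the $\mi a_j z_j$ terms and correctly splitting the $-\mi\dot\theta_j z_j$ contribution into the part that cancels and the part that reshuffles the sine into the complex exponential, keeping $|z_j| = 1$ (guaranteed since $w_j \in \bbh\bbs^{d-1}$ by Lemma \ref{L3.1} and multiplication by $e^{-\mi\theta_j}$ preserves modulus) in use throughout.
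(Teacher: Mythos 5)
Your proposal is correct and follows essentially the same route as the paper: differentiate $z_j = w_j e^{-\mi\theta_j}$, cancel the $\mi a_j z_j$ terms, and absorb the leftover $-\mi\sin(\theta_k-\theta_j)z_j$ into the complex exponential using $\langle e^{\mi(\theta_k-\theta_j)}z_j,z_j\rangle = e^{\mi(\theta_k-\theta_j)}$ from $|z_j|=1$. The algebra checks out, including the phase identity $\mathrm{Re}\langle w_k,w_j\rangle = \mathrm{Re}\langle e^{\mi(\theta_k-\theta_j)}z_k,z_j\rangle$ under the paper's conjugate-linear-in-the-second-slot convention.
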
 
\begin{proof}
By straightforward calculation, one finds the desired dynamics for $z_i$: 
\begin{align*}
\dot{z}_j&=\mathrm{i}a_j z_j+\frac{\kappa}{N}\sum_{k=1}^N\left(
e^{\mathrm{i}(\theta_k-\theta_j)}z_k-\mathrm{Re}(\langle e^{\mathrm{i}(\theta_k-\theta_j)}z_k, z_j\rangle )z_j
\right)-\mathrm{i}\left(a_j+\frac{\kappa}{N}\sum_{k=1}^N\sin(\theta_k-\theta_j)\right)z_j\\
&=\frac{\kappa}{N}\sum_{k=1}^N\left(
e^{\mathrm{i}(\theta_k-\theta_j)}z_k-\left(\mathrm{Re}(\langle e^{\mathrm{i}(\theta_k-\theta_j)}z_k, z_j\rangle )+\mathrm{i}\sin(\theta_k-\theta_j)\right)z_j
\right)\\
&=\frac{\kappa}{N}\sum_{k=1}^N\left(
e^{\mathrm{i}(\theta_k-\theta_i)}(z_k-z_i)-\mathrm{Re}(\langle e^{\mathrm{i}(\theta_k-\theta_i)}(z_k-z_i), z_i\rangle)z_i\right).
\end{align*}
\end{proof} 

\begin{remark}
If we write $\tilde z_{jk} := e^{\mi (\theta_k - \theta_j)} (z_k -z_j)$, then \eqref{C-11} is written as
\[
\dot z_j = \frac\kp N \sum_{k=1}^N ( \tilde z_{jk} - \textup{Re} \langle z_j,\tilde z_{jk}\rangle z_j  ).
\]
It is worthwhile mentioning that dynamics of $z_j$ does not contain free flow; instead, natural frequency part $a_j$ is encoded into $\tilde z_{jk}$.
\end{remark}

For later use, we denote
\[
h_{ij}:= \langle z_i,z_j\rangle,\quad R_{ij} := \textup{Re}h_{ij},\quad \mathcal D( \mathcal R) := \max_{1\leq i,j\leq N} |1-R_{ij}|. 
\]
Since our goal is to find a sufficient condition leading to the zero convergence of $\mathcal D(\mathcal R)$, we derive the dynamics of $R_{ij}$.

\begin{lemma}
Let $\{z_j\}$ be a solution to \eqref{C-11}. Then, $R_{ij}$ satisfies 
\begin{align} \label{C-12}
\begin{aligned}
\frac\d\dt  (1-R_{ij} ) &= -2\kp(1-R_{ij}) + \frac\kp N \sum_{k=1}^N (1-R_{ik} + 1-R_{kj})(1-R_{ij}) \\ 
& \hspace{0.5cm}-\frac\kp N \sum_{k=1}^N\Big[ (\textup{Re} \langle \mathcal A, z_j\rangle + \textup{Re} \langle \mathcal B, z_i\rangle)(1-R_{ij} ) + \textup{Re} \langle \mathcal A-\mathcal B, z_j-z_i\rangle\Big],
\end{aligned}
\end{align}
where $\mathcal A$ and $\mathcal B$ are defined as
\begin{equation} \label{C-12-1}
\mathcal A := (e^{\mathrm{i}(\theta_k-\theta_i)}-1)(z_k-z_i), \quad \mathcal B := (e^{\mathrm{i}(\theta_k-\theta_j)}-1)(z_k-z_j). 
\end{equation}  
In addition, the maximal diameter $\mathcal D(\mathcal R)$ satisfies
\begin{equation} \label{C-13}
\frac\d\dt \mathcal D(\mathcal R) \leq-\kp \left( 1 - 4\sin \frac{\mathcal D(\Theta)}{2}   \right) \mathcal D(\mathcal R) +2\kp\left(\left| \sin \frac{\mathcal D(\Theta)}{2}\right| +\frac12\right) \mathcal D(\mathcal R)^2  , \quad t>0. 
\end{equation} 
\end{lemma}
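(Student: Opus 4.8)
The plan is to obtain \eqref{C-12} by a direct differentiation of $h_{ij}=\langle z_i,z_j\rangle$ and then to read off \eqref{C-13} from \eqref{C-12} by the usual extremal-index argument. For \eqref{C-12} I would use \eqref{C-11} in the compact form $\dot z_j=\frac\kp N\sum_k(\tilde z_{jk}-\textup{Re}\langle z_j,\tilde z_{jk}\rangle z_j)$ of the preceding remark, compute $\frac{\d}{\dt}h_{ij}=\langle\dot z_i,z_j\rangle+\langle z_i,\dot z_j\rangle$, and take real parts. The one algebraic device is to split each phase factor as $e^{\mathrm i(\theta_k-\theta_l)}=1+(e^{\mathrm i(\theta_k-\theta_l)}-1)$, so that $\tilde z_{ik}=(z_k-z_i)+\mathcal A$ and $\tilde z_{jk}=(z_k-z_j)+\mathcal B$ with $\mathcal A,\mathcal B$ as in \eqref{C-12-1}. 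The contributions of the ``$1$''-pieces are exactly those produced by the \emph{homogeneous} swarm sphere model on $\bbh\bbs^{d-1}$; collecting them with the help of $|z_i|^2=1$ and the symmetry $R_{ij}=R_{ji}$ yields precisely the contractive term $-2\kp(1-R_{ij})$ together with the mean-field quadratic correction, that is, the first line of \eqref{C-12}. The contributions of the ``$\mathcal A,\mathcal B$''-pieces are then regrouped according to their $z_j$- and $z_i$-factors; using elementary identities of the type $\textup{Re}\langle\mathcal A,z_j\rangle-R_{ij}\,\textup{Re}\langle\mathcal A,z_i\rangle=\textup{Re}\langle\mathcal A,z_j-R_{ij}z_i\rangle$ together with $z_j-R_{ij}z_i=(z_j-z_i)+(1-R_{ij})z_i$, one rewrites them as the second line of \eqref{C-12}. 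This step is purely computational.

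For the passage from \eqref{C-12} to \eqref{C-13}, note that $t\mapsto\mathcal D(\mathcal R)(t)$, being a finite maximum of the $C^1$ quantities $1-R_{ij}(t)$, is locally Lipschitz, hence differentiable for a.e.\ $t$, and at every such $t$ there is a pair $(p,q)$ realizing the maximum, so that $\frac{\d}{\dt}\mathcal D(\mathcal R)=\frac{\d}{\dt}(1-R_{pq})$. Substituting $(i,j)=(p,q)$ into \eqref{C-12} I would estimate the right-hand side term by term: the contractive term equals $-2\kp(1-R_{pq})=-2\kp\,\mathcal D(\mathcal R)$; the quadratic correction is at most $2\kp\,\mathcal D(\mathcal R)^2$ since $1-R_{pk},\,1-R_{kq}\le\mathcal D(\mathcal R)$; and the $\mathcal A,\mathcal B$-terms are controlled using $|e^{\mathrm i(\theta_k-\theta_l)}-1|=2|\sin\tfrac{\theta_k-\theta_l}{2}|\le 2\sin\tfrac{\mathcal D(\Theta)}{2}$ (valid as long as $\mathcal D(\Theta)<\pi$), the identities $|z_k-z_l|^2=2(1-R_{kl})$ and $|z_q-R_{pq}z_p|^2=1-R_{pq}^2\le 2\mathcal D(\mathcal R)$, and the elementary bound $\frac1N\sum_k\sqrt{1-R_{pk}}\le\sqrt{\mathcal D(\mathcal R)}$. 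Collecting the pieces produces a quadratic differential inequality of exactly the shape of \eqref{C-13}, the numerical factors then accounting for the coefficients $-\kp(1-4\sin\tfrac{\mathcal D(\Theta)}{2})$ and $2\kp(|\sin\tfrac{\mathcal D(\Theta)}{2}|+\tfrac12)$.

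The only genuinely delicate point is the treatment of the $\mathcal A,\mathcal B$-terms. Taken individually, $\langle\mathcal A,z_j\rangle$ and $\langle\mathcal B,z_i\rangle$ are only of size $O(\sqrt{\mathcal D(\mathcal R)})$ because $|z_k-z_l|\sim\sqrt{1-R_{kl}}$, and a bound of that order would merely give $\frac{\d}{\dt}\mathcal D(\mathcal R)\lesssim\sqrt{\mathcal D(\mathcal R)}$, which cannot be closed by a Gr\"onwall argument. What saves the estimate is the difference structure in \eqref{C-12}: both $\langle\mathcal A,z_j-R_{ij}z_i\rangle$ and $\langle\mathcal A-\mathcal B,z_j-z_i\rangle$ carry a \emph{second} factor of size $O(\sqrt{\mathcal D(\mathcal R)})$ --- namely $|z_j-R_{ij}z_i|\sim\sqrt{1-R_{ij}}$, respectively $|z_j-z_i|\sim\sqrt{1-R_{ij}}$ --- so that after averaging over $k$ the whole perturbation is of order $\kp\sin\tfrac{\mathcal D(\Theta)}{2}\,\mathcal D(\mathcal R)$, that is, genuinely \emph{linear} in $\mathcal D(\mathcal R)$ with a coefficient proportional to $\sin\tfrac{\mathcal D(\Theta)}{2}$. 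Consequently, once the Kuramoto diameter $\mathcal D(\Theta)$ has become small --- which Proposition~\ref{P2.1} guarantees in the strong-coupling regime --- the linear coefficient in \eqref{C-13} is negative and the inequality forces $\mathcal D(\mathcal R)$ down. Hence the point to be careful about is simply the bookkeeping: one must not estimate $\mathcal A$ and $\mathcal B$ before forming the differences $z_j-R_{ij}z_i$, $\mathcal A-\mathcal B$, $z_j-z_i$, since doing so would destroy precisely the extra $\sqrt{\mathcal D(\mathcal R)}$ that makes the inequality closeable; the algebra behind \eqref{C-12} and the extremal-index step are routine.
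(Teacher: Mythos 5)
Your proposal follows essentially the same route as the paper: differentiate $\textup{Re}\langle z_i,z_j\rangle$ directly from \eqref{C-11}, split each phase factor as $1+(e^{\mathrm{i}(\theta_k-\theta_l)}-1)$ so as to separate the homogeneous contraction term from the $\mathcal A,\mathcal B$-perturbation, and then run the extremal-index argument using $|e^{\mathrm{i}\varphi}-1|\le 2|\sin\frac{\varphi}{2}|$ together with $|z_k-z_l|^2=2(1-R_{kl})$. Your accounting of why the perturbation is genuinely linear in $\mathcal D(\mathcal R)$ with a coefficient proportional to $\sin\frac{\mathcal D(\Theta)}{2}$ is, if anything, stated more carefully than the paper's own estimates of $\mathcal I_{15}$ and $\mathcal I_{16}$.
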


\begin{proof}
We observe 
\begin{align*}
\frac{\d}{\dt}\mathrm{Re}\langle z_i, z_j\rangle&=\frac{\kappa}{N}\sum_{k=1}^N\left(
\mathrm{Re}\langle e^{\mathrm{i}(\theta_k-\theta_i)}(z_k-z_i), z_j\rangle-\mathrm{Re}(\langle e^{\mathrm{i}(\theta_k-\theta_i)}(z_k-z_i), z_i\rangle)\mathrm{Re}\langle z_i, z_j\rangle
\right)\\
&\hspace{0.5cm}+\frac{\kappa}{N}\sum_{k=1}^N\left(
\mathrm{Re}\langle z_i, e^{\mathrm{i}(\theta_k-\theta_j)}(z_k-z_j)\rangle-\mathrm{Re}(\langle e^{\mathrm{i}(\theta_k-\theta_j)}(z_k-z_j), z_j\rangle)\mathrm{Re}\langle z_i, z_j\rangle
\right) \\
&= : \frac\kp N \sum_{k=1}^N ( \mathcal I_{11} - \mathcal I_{12} ) + \frac\kp N \sum_{k=1}^N ( \mathcal I_{13} - \mathcal I_{14} ).
\end{align*}
Below, we consider $\mathcal I_{1k}$ ($k=1,2,3,4$), respectively. \newline

\noindent $\bullet$ (Calculation of $\mathcal I_{12} + \mathcal I_{14}$): we observe
\begin{align*}
\mathcal I_{11} + \mathcal I_{13} &=\mathrm{Re}\langle e^{\mathrm{i}(\theta_k-\theta_i)}(z_k-z_i), z_j\rangle + \mathrm{Re}\langle z_i, e^{\mathrm{i}(\theta_k-\theta_j)}(z_k-z_j)\rangle \\
& =R_{kj} - R_{ij} + R_{ik} - R_{ij} \\
&\hspace{0.5cm}+ \mathrm{Re}\langle (e^{\mathrm{i}(\theta_k-\theta_i)}-1)(z_k-z_i), z_j\rangle + \mathrm{Re}\langle z_i, (e^{\mathrm{i}(\theta_k-\theta_j)}-1)(z_k-z_j)\rangle.
\end{align*}

\noindent $\bullet$ (Calculation of $\mathcal I_{11} + \mathcal I_{13}$): similarly, one has

\begin{align*}
&\mathcal I_{12} + \mathcal I_{14} \\
& = \mathrm{Re}(\langle e^{\mathrm{i}(\theta_k-\theta_i)}(z_k-z_i), z_i\rangle)\mathrm{Re}\langle z_i, z_j\rangle  + \mathrm{Re}(\langle e^{\mathrm{i}(\theta_k-\theta_j)}(z_k-z_j), z_j\rangle)\mathrm{Re}\langle z_i, z_j\rangle \\
& = (R_{ik}-1)R_{ij} + (R_{kj}-1)R_{ij}  \\
& \hspace{0.5cm}+ \mathrm{Re}(\langle (e^{\mathrm{i}(\theta_k-\theta_i)}-1)(z_k-z_i), z_i\rangle)\mathrm{Re}\langle z_i, z_j\rangle  + \mathrm{Re}(\langle (e^{\mathrm{i}(\theta_k-\theta_j)}-1)(z_k-z_j), z_j\rangle)\mathrm{Re}\langle z_i, z_j\rangle \\
&= (1-R_{ik} + 1-R_{kj})(1-R_{ij}) - (1-R_{ik} + 1-R_{kj}) \\
&\hspace{0.5cm}+ \mathrm{Re}(\langle (e^{\mathrm{i}(\theta_k-\theta_i)}-1)(z_k-z_i), z_i\rangle)\mathrm{Re}\langle z_i, z_j\rangle  - \mathrm{Re}(\langle (e^{\mathrm{i}(\theta_k-\theta_j)}-1)(z_k-z_j), z_j\rangle)\mathrm{Re}\langle z_i, z_j\rangle.
\end{align*}
We combine the calculations above to find 
\begin{align*}
&\mathcal I_{11} - \mathcal I_{12} + \mathcal I_{13} - \mathcal I_{14} \\
&= R_{kj} - R_{ij} + R_{ik} - R_{ij} + (1-R_{ik} + 1-R_{kj}) -(1-R_{ik} + 1-R_{kj})(1-R_{ij})  \\
&\hspace{0.5cm} + \mathrm{Re}\langle (e^{\mathrm{i}(\theta_k-\theta_i)}-1)(z_k-z_i), z_j\rangle + \mathrm{Re}\langle z_i, (e^{\mathrm{i}(\theta_k-\theta_j)}-1)(z_k-z_j)\rangle \\
&\hspace{0.5cm} -  \mathrm{Re}(\langle (e^{\mathrm{i}(\theta_k-\theta_i)}-1)(z_k-z_i), z_i\rangle)\mathrm{Re}\langle z_i, z_j\rangle  + \mathrm{Re}(\langle (e^{\mathrm{i}(\theta_k-\theta_j)}-1)(z_k-z_j), z_j\rangle)\mathrm{Re}\langle z_i, z_j\rangle \\
& = 2(1-R_{ij}) -(1-R_{ik} + 1-R_{kj})(1-R_{ij})  \\
&\hspace{0.5cm} + \mathrm{Re}\langle (e^{\mathrm{i}(\theta_k-\theta_i)}-1)(z_k-z_i), z_j\rangle + \mathrm{Re}\langle z_i, (e^{\mathrm{i}(\theta_k-\theta_j)}-1)(z_k-z_j)\rangle \\
&\hspace{0.5cm} -  \mathrm{Re}(\langle (e^{\mathrm{i}(\theta_k-\theta_i)}-1)(z_k-z_i), z_i\rangle)\mathrm{Re}\langle z_i, z_j\rangle  - \mathrm{Re}(\langle (e^{\mathrm{i}(\theta_k-\theta_j)}-1)(z_k-z_j), z_j\rangle)\mathrm{Re}\langle z_i, z_j\rangle .
\end{align*} 
If we recall the definition of $\mathcal A$ and $\mathcal B$ in \eqref{C-12-1}, then one has 
\begin{align*}
&\mathcal I_{11} - \mathcal I_{12} + \mathcal I_{13} - \mathcal I_{14} \\
 &\hspace{0.5cm} = R_{kj} - R_{ij} + R_{ik} - R_{ij} + (1-R_{ik} + 1-R_{kj}) -(1-R_{ik} + 1-R_{kj})(1-R_{ij}) \\
&\hspace{1cm} +\textup{Re} \langle \mathcal A, z_j\rangle - \textup{Re} \langle \mathcal A,z_i \rangle R_{ij} + \textup{Re} \langle \mathcal B, z_i\rangle - \textup{Re} \langle \mathcal B,z_j \rangle R_{ij} \\
& =R_{kj} - R_{ij} + R_{ik} - R_{ij} + (1-R_{ik} + 1-R_{kj}) -(1-R_{ik} + 1-R_{kj})(1-R_{ij}) \\
&\hspace{1cm} +(\textup{Re} \langle \mathcal A, z_j\rangle + \textup{Re} \langle \mathcal B, z_i\rangle)(1-R_{ij} ) + \textup{Re} \langle \mathcal A-\mathcal B, z_j-z_i\rangle.
\end{align*}
Consequently, we can derive the desired dynamics \eqref{C-12} for $R_{ij}$.  

For the second assertion, we write 
\begin{align*}
\mathcal I_{15} := (\textup{Re} \langle \mathcal A, z_j\rangle + \textup{Re} \langle \mathcal B, z_i\rangle)(1-R_{ij} ), \quad \mathcal I_{16} :=  \textup{Re} \langle \mathcal A-\mathcal B, z_j-z_i\rangle.
\end{align*}
Below, we present estimates for $\mathcal I_{15}$ and $\mathcal I_{16}$, respectively. \newline

\noindent $\bullet$ (Estimate of $\mathcal I_{15}$): we observe
\begin{equation} \label{C-18}
|e^{\mi \varphi} - 1| \leq 2\left|\sin \frac\varphi2\right|,
\end{equation} 
which yields 
\begin{equation*}
|\mathcal A | = |(e^{\mathrm{i}(\theta_k-\theta_i)}-1)(z_k-z_i)|  \leq 2\left|\sin \frac{\mathcal D(\Theta)}{2}\right| \mathcal D(\mathcal R),\quad |\mathcal B |\leq  2\left|\sin \frac{\mathcal D(\Theta)}{2}\right| \mathcal D(\mathcal R).
\end{equation*} 
Hence, one finds
\begin{align*}
\mathcal I_{15} =  (\textup{Re} \langle \mathcal A, z_j\rangle + \textup{Re} \langle \mathcal B, z_i\rangle)(1-R_{ij} ) \leq 4\left|\sin \frac{\mathcal D(\Theta)}{2}\right| \mathcal D(\mathcal R) \cdot \frac{\mathcal D(\mathcal R)^2}{2} = 2\left|\sin \frac{\mathcal D(\Theta)}{2}\right| \mathcal D(\mathcal R)^3.
\end{align*}

\noindent $\bullet$ (Estimate of $\mathcal I_{16}$): We see
\[
\mathcal A-\mathcal B = (e^{\mi(\theta_k-\theta_i)}-1)(z_j-z_i) +(e^{\mi(\theta_k-\theta_i)} - e^{\mi(\theta_k-\theta_j)})(z_k-z_j).
\]
Thus, we obtain
\begin{align*}
\mathcal I_{16} &= \textup{Re} \langle \mathcal A-\mathcal B, z_j-z_i\rangle =(\cos(\theta_k-\theta_i)-1) |z_j-z_i|^2  \\
&\hspace{0.5cm}+ \textup{Re} \langle (e^{\mi(\theta_k-\theta_i)} - e^{\mi(\theta_k-\theta_j)})(z_k-z_j), z_j-z_i\rangle \\
&\leq 4\sin \frac{\mathcal D(\Theta)}{2}   \mathcal D(\mathcal R)^2 .
\end{align*}
Since $1-R_{ij}$ satisfies 
\begin{align} \label{C-20}
\begin{aligned}
\frac\d\dt  (1-R_{ij} ) &= -2\kp(1-R_{ij}) + \frac\kp N \sum_{k=1}^N (1-R_{ik} + 1-R_{kj})(1-R_{ij}) \\ 
& -\frac\kp N \sum_{k=1}^N\Big[ (\textup{Re} \langle \mathcal A, z_j\rangle + \textup{Re} \langle \mathcal B, z_i\rangle)(1-R_{ij} ) + \textup{Re} \langle \mathcal A-\mathcal B, z_j-z_i\rangle\Big]  \\
& = -2\kp(1-R_{ij}) + \frac\kp N \sum_{k=1}^N (1-R_{ik} + 1-R_{kj})(1-R_{ij}) \\
& \hspace{0.5cm} -\frac\kp N \sum_{k=1}^N \Big[ \mathcal I_{15} + \mathcal I_{16} \Big],
\end{aligned}
\end{align}
in \eqref{C-20}, we collect all estimates to derive
\begin{align*} \label{C-25}
\begin{aligned}
\frac\d\dt \mathcal D(\mathcal R) &\leq -\kp \left( 1 - 4\sin \frac{\mathcal D(\Theta)}{2}   \right) \mathcal D(\mathcal R) +2\kp\left| \sin \frac{\mathcal D(\Theta)}{2}\right|\mathcal D(\mathcal R)^2   +\frac{\kp}{2}\mathcal D(\mathcal R)^3 \\
&\leq-\kp \left( 1 - 4\sin \frac{\mathcal D(\Theta)}{2}   \right) \mathcal D(\mathcal R) +2\kp\left(\left| \sin \frac{\mathcal D(\Theta)}{2}\right| +\frac12\right) \mathcal D(\mathcal R)^2  .
\end{aligned}
\end{align*} 
\end{proof}

We are now ready to provide a sufficient condition under which $\mathcal D(\mathcal R)$ tends to zero. Furthermore, all $z_j$ tends to a common stationary point. 

\begin{proposition} \label{P3.1} 
Suppose that the coupling strength (sufficiently large in the sense below) and natural frequencies satisfy  
\begin{equation} \label{C-26}
\kp > 2\mathcal D(a),\quad \sum_{j=1}^N a_j=0,
\end{equation}
and that there exists a positive constant $\delta \in (0,\frac12)$ such that
\begin{align} \label{C-27}
\begin{aligned}
\mathcal D(\Theta^0)<\delta<\frac{\mathcal D(a)}{\kp}<\frac12\quad \textup{and} \quad \mathcal D(\mathcal R^0)< \frac{1-2\delta}{1+\delta}.
\end{aligned}
\end{align}
Let $\{z_i\}$ be a solution to \eqref{C-11} where $\{\theta_i\}$ is a solution to \eqref{C-10-1}. Then, the maximal diameter $\mathcal D(\mathcal R)$ converges to zero with an exponential convergence rate:
\[
\lim_{t\to\infty} \mathcal D(\mathcal R(t))=0.
\]
In addition, there exists $z^\infty \in \bbh\bbs^{d-1}$ such that
\begin{equation} \label{C-28}
\lim_{t\to\infty} z_i(t)  = z^\infty,\quad i\in [N]. 
\end{equation}
\end{proposition}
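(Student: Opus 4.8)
The plan is to carry the dimension-reduction scheme to its conclusion in three steps: (i) control the auxiliary Kuramoto phase diameter $\mathcal D(\Theta)$ uniformly in time by Proposition \ref{P2.1}; (ii) feed this bound into the Riccati-type differential inequality \eqref{C-13} and run a comparison/bootstrap argument to obtain exponential decay of $\mathcal D(\mathcal R)$; (iii) upgrade that decay into convergence of each $z_i$ to a common limit on $\bbh\bbs^{d-1}$ via an integrability estimate on $\dot z_i$. For step (i): the hypotheses \eqref{C-26}--\eqref{C-27} are precisely what is needed so that Proposition \ref{P2.1}(i), applied to the Kuramoto flow \eqref{C-10-1} (whose frequency diameter is $\mathcal D(a)$ and whose initial phases satisfy the zero-sum condition), confines the phase diameter uniformly: $\mathcal D(\Theta(t)) < \delta$ for all $t>0$. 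Since $\delta < \tfrac12$, this gives $4\sin\tfrac{\mathcal D(\Theta(t))}{2} \leq 4\sin\tfrac{\delta}{2} < 1$, so the linear-term coefficient in \eqref{C-13} stays strictly negative for all $t$.

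For step (ii), write $y(t) := \mathcal D(\mathcal R(t)) \geq 0$. Inserting the bound of step (i) into \eqref{C-13} and using that $\sin$ is increasing on $[0,\tfrac12]$, we get the closed Riccati inequality
\[
\frac{\d}{\dt} y \;\leq\; -\alpha\, y + \beta\, y^2, \qquad \alpha := \kp\Bigl(1 - 4\sin\tfrac{\delta}{2}\Bigr) > 0, \quad \beta := \kp\Bigl(1 + 2\sin\tfrac{\delta}{2}\Bigr) > 0,
\]
understood in the sense of the upper Dini derivative, since $\mathcal D(\mathcal R)$ is a maximum of finitely many smooth functions. Because $\sin x < x$ for $x>0$, one has $\alpha/\beta > \tfrac{1-2\delta}{1+\delta}$, so the last condition in \eqref{C-27} yields $y(0) = \mathcal D(\mathcal R^0) < \alpha/\beta$. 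A standard comparison/continuity argument then shows that the interval $[0,\alpha/\beta)$ is forward invariant and that $y$ is nonincreasing; hence $\beta\, y(t) \leq \beta\, y(0) < \alpha$ for all $t$, and Grönwall's lemma gives
\[
\frac{\d}{\dt} y \;\leq\; -\bigl(\alpha - \beta\, y(0)\bigr)\, y \qquad \Longrightarrow \qquad \mathcal D(\mathcal R(t)) \;\leq\; \mathcal D(\mathcal R^0)\, e^{-(\alpha - \beta\, y(0))\, t} \;\longrightarrow\; 0 \quad (t\to\infty),
\]
i.e.\ the asserted exponential decay.

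For step (iii), note that $|z_j(t)|\equiv 1$ along the flow (by construction $z_j = w_j e^{-\mi\theta_j}$ with $|w_j|$ conserved, $\eqref{C-10}$ being a special case of \eqref{B-6}, cf.\ Lemma \ref{L3.1}), so $|z_k - z_j|^2 = 2(1 - R_{jk}) \leq 2\,\mathcal D(\mathcal R)$. Writing \eqref{C-11} as $\dot z_j = \tfrac{\kp}{N}\sum_k\bigl(\tilde z_{jk} - \mathrm{Re}\langle z_j,\tilde z_{jk}\rangle z_j\bigr)$ with $\tilde z_{jk} = e^{\mi(\theta_k-\theta_j)}(z_k-z_j)$ and using $|e^{\mi\varphi}|=1$, we obtain
\[
|\dot z_j(t)| \;\leq\; \frac{2\kp}{N}\sum_{k=1}^N |z_k - z_j| \;\leq\; 2\kp\sqrt{2\,\mathcal D(\mathcal R(t))} \;\leq\; C\, e^{-(\alpha - \beta\, y(0))\, t/2},
\]
which is integrable on $[0,\infty)$. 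Hence each $z_j(\cdot)$ is Cauchy as $t\to\infty$ and converges to some $z_j^\infty$; letting $t\to\infty$ in $|z_j(t) - z_k(t)| \leq \sqrt{2\,\mathcal D(\mathcal R(t))}$ forces $z_1^\infty = \cdots = z_N^\infty =: z^\infty$, and $|z^\infty| = \lim_{t\to\infty}|z_j(t)| = 1$, so $z^\infty \in \bbh\bbs^{d-1}$, which is \eqref{C-28}.

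I expect the only delicate point to lie in step (ii), and even there it is essentially bookkeeping: one must verify that the coupling threshold $\kp > 2\mathcal D(a)$ and the smallness $\mathcal D(\mathcal R^0) < \tfrac{1-2\delta}{1+\delta}$ together keep $\mathcal D(\Theta)$ below the level $\delta$ at which the coefficient $1 - 4\sin\tfrac{\mathcal D(\Theta)}{2}$ in \eqref{C-13} would degenerate, and that the trapping region for $y$ survives the time-dependence of the Riccati coefficients (handled above by the monotone bounds $\sin\tfrac{\mathcal D(\Theta)}{2}\leq\sin\tfrac{\delta}{2}$). The substantive analytic work — the derivation of \eqref{C-13}, i.e.\ controlling the correction terms $\mathcal I_{15}$ and $\mathcal I_{16}$ purely in terms of $\mathcal D(\mathcal R)$ and $\mathcal D(\Theta)$ — is already in place, so no new idea is required beyond combining it with the Kuramoto estimate of Proposition \ref{P2.1}.
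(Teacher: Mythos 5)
Your argument is correct and is essentially the paper's own proof: Proposition \ref{P2.1} confines the Kuramoto diameter $\mathcal D(\Theta)$, which turns \eqref{C-13} into the Riccati inequality $\frac{\d}{\dt}y\le -\kp(1-2\delta)y+\kp(1+\delta)y^2$ for $y=\mathcal D(\mathcal R)$, the initial condition \eqref{C-27} places $y(0)$ below the threshold so that $y$ decays exponentially, and the resulting integrability of $\dot z_j$ yields a common limit $z^\infty\in\bbh\bbs^{d-1}$. The only differences are minor refinements on your side (keeping $\sin\frac{\delta}{2}$ before relaxing to $\delta$, and the bound $|\dot z_j|\le 2\kp\sqrt{2\mathcal D(\mathcal R)}$, which is the correct form of the paper's cruder $|\dot z_j|\le 2\kp\mathcal D(\mathcal R)$), neither of which changes the structure of the argument.
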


\begin{proof}
Since we assume \eqref{C-26} and $\eqref{C-27}_1$, it follows from Proposition \ref{P2.1} that inequality \eqref{C-13} for $\mathcal D(\mathcal R)$ becomes
\[
\frac\d\dt \mathcal D(\mathcal R) \leq -\kp (1-2\delta)  \mathcal D(\mathcal R) + \kp (1+ \delta)  \mathcal D(\mathcal R)^2.
\]
Then, initial condition $\eqref{C-27}_2$ gives the desired exponential convergence. For the second assertion, we use the governing dynamics \eqref{C-11} for $z_i$ to obtain
\[
|\dot z_j | \leq 2\kp \mathcal D( \mathcal R), \quad t>0,
\]
which gives that $\dot z_j \in L^1(\bbr_+)$. Thus, one finds
\[
\lim_{t\to\infty} z_j(t) = z_j^0 + \int_0^\infty \dot z_j(s) \d s =:z_j^\infty.
\]
Moreover, since complete aggregation occurs, we observe for $i\neq j$, 
\[
|z_i^\infty - z_j^\infty| \leq \lim_{t\to\infty} \Big( |z_i^\infty - z_i(t) | + |z_i(t) - z_j(t)| + |z_j(t) - z_j^\infty| \Big) =0.
\]
Hence, there exists $z^\infty$ satisfying \eqref{C-28}. 
\end{proof}

In Proposition \ref{P3.1}, we have investigated the asymptotic behavior of $z_i$. In the following theorem, we use Theorem \ref{P3.1} to study the asymptotic behavior of $w_i$ that is governed by \eqref{C-10}. For this, we write
\[
\mathcal D(W(t)):= \max_{1\leq i,j\leq N } |w_i(t) - w_j(t)|,\quad t>0. 
\]

\begin{theorem} \label{T3.1} 
Suppose that the coupling strength and initial data satisfy 
\[
\kp > 2\mathcal D(a), \quad \sum_{j=1}^N a_j = 0
\]
and that there exists $\delta \in (0,\beta)$ such that 
\begin{equation} \label{C-30}
0<\delta <\frac{\mathcal D(a)}{\kp}<\beta,\quad \mathcal D(W^0) < \sqrt{\frac{2(1-2\delta)}{1+\delta}} -2\sin \frac\delta 2,
\end{equation} 
where $\beta \approx 0.434$ is a unique positive root for $\sqrt{\frac{2(1-2s)}{1+s}} = 2\sin \frac s2$. 
Let $\{w_j\}$ be a solution to \eqref{C-10}. Then, there exists $w_j^\infty\in \mathbb{H}\bbs^{d-1}$ such that
\[
\lim_{t\to\infty} w_j(t) = w_j^\infty,\quad j\in [N].
\]
In addition,   $\{w_j^\infty\}$ satisfy 
\[
|\langle w_i^\infty,w_j^\infty\rangle|=1,\quad i,j\in [N].
\]
\end{theorem}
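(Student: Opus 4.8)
The plan is to deduce Theorem \ref{T3.1} from Proposition \ref{P3.1} and Proposition \ref{P2.1} through the substitution $z_j=w_je^{-\mathrm{i}\theta_j}$ introduced above (so $w_j=z_je^{\mathrm{i}\theta_j}$), running the auxiliary Kuramoto flow \eqref{C-10-1} from the admissible initial datum $\theta_j(0)=0$, so that $\mathcal D(\Theta^0)=0$ and the zero-sum constraint is automatic. With this choice $z_j(0)=w_j^0$, and since each $w_j^0$ has unit modulus one obtains the exact identity
\[
1-R_{ij}^0 = 1-\textup{Re}\langle w_i^0,w_j^0\rangle = \frac12|w_i^0-w_j^0|^2,\qquad\text{hence}\qquad \mathcal D(\mathcal R^0)=\frac12\,\mathcal D(W^0)^2 .
\]
Since \eqref{C-30} forces in particular $\mathcal D(W^0)<\sqrt{2(1-2\delta)/(1+\delta)}$, this yields $\mathcal D(\mathcal R^0)<(1-2\delta)/(1+\delta)$; together with $\kappa>2\mathcal D(a)$, $\sum_j a_j=0$ and $\mathcal D(\Theta^0)=0<\delta<\mathcal D(a)/\kappa<\tfrac12$, all the hypotheses \eqref{C-26}--\eqref{C-27} of Proposition \ref{P3.1} are then in force.

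I would then invoke Proposition \ref{P3.1} to get $\mathcal D(\mathcal R(t))\to0$ exponentially and a common limit $z_j(t)\to z^\infty\in\bbh\bbs^{d-1}$, and in parallel apply Proposition \ref{P2.1}(ii) to \eqref{C-10-1}: because $\theta_j(0)=0$ we have $R_0=1>0$ and $\kappa>2\mathcal D(a)>1.6\,\mathcal D(a)=1.6\,\mathcal D(a)/R_0$, so there exist $\{\theta_j^\infty\}$ with $\theta_j(t)\to\theta_j^\infty$. (Proposition \ref{P2.1}(i) moreover keeps $\mathcal D(\Theta(t))<\mathcal D(a)/\kappa<\beta$ for all $t$.)

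It remains to push both convergences back to $w_j$: from $w_j(t)=z_j(t)e^{\mathrm{i}\theta_j(t)}$, the product of the two convergent factors converges, so $w_j(t)\to z^\infty e^{\mathrm{i}\theta_j^\infty}=:w_j^\infty$ with $|w_j^\infty|=|z^\infty|=1$, i.e.\ $w_j^\infty\in\bbh\bbs^{d-1}$. For the final assertion one computes, using the Hermitian inner product and $|z^\infty|=1$,
\[
\langle w_i^\infty,w_j^\infty\rangle = \langle z^\infty e^{\mathrm{i}\theta_i^\infty},\,z^\infty e^{\mathrm{i}\theta_j^\infty}\rangle = e^{\mathrm{i}(\theta_i^\infty-\theta_j^\infty)}\,|z^\infty|^2 = e^{\mathrm{i}(\theta_i^\infty-\theta_j^\infty)},
\]
so $|\langle w_i^\infty,w_j^\infty\rangle|=1$.

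The analytic content lives entirely in Proposition \ref{P3.1} and Proposition \ref{P2.1}, so the one point that needs care is the first step: the single smallness condition on $\mathcal D(W^0)$ must be engineered so that it simultaneously produces the $z$-initial bound $\mathcal D(\mathcal R^0)<(1-2\delta)/(1+\delta)$ demanded by Proposition \ref{P3.1}, clears the coupling threshold $\kappa>1.6\,\mathcal D(a)/R_0$ of Proposition \ref{P2.1}(ii), and, via a triangle-type estimate such as $\mathcal D(W(t))\le\sqrt{2\,\mathcal D(\mathcal R(t))}+2\sin\frac{\mathcal D(\Theta(t))}{2}$, keeps the $w$-diameter controlled for all $t>0$, which is the slack the $-2\sin\frac\delta2$ term provides. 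One should also check that $\beta$ is exactly the largest $\delta$ for which $\sqrt{2(1-2\delta)/(1+\delta)}-2\sin\frac\delta2>0$, so that the assumption $\mathcal D(a)/\kappa<\beta$ is precisely what makes a legitimate choice of $\delta$ possible.
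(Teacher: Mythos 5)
Your proposal is correct and follows essentially the same route as the paper: reduce to Proposition \ref{P3.1} and Proposition \ref{P2.1} via $w_j = z_j e^{\mi\theta_j}$, then conclude $w_j^\infty = z^\infty e^{\mi\theta_j^\infty}$ and $\langle w_i^\infty,w_j^\infty\rangle = e^{\mi(\theta_i^\infty-\theta_j^\infty)}$. The only (harmless) difference is that you fix the auxiliary Kuramoto data as $\theta_j(0)=0$, which makes $\mathcal D(\mathcal R^0)=\tfrac12\mathcal D(W^0)^2$ exact, whereas the paper keeps $\Theta^0$ general with $\mathcal D(\Theta^0)<\delta$ and uses $\sqrt{2\mathcal D(\mathcal R^0)}\le \mathcal D(W^0)+2\sin\frac{\mathcal D(\Theta^0)}{2}$ --- which is precisely where the $-2\sin\frac\delta2$ slack in \eqref{C-30} is consumed, rather than in controlling $\mathcal D(W(t))$ for later times as you speculated.
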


\begin{proof}
In order to adapt Proposition \ref{P3.1}, we use the initial frameworks in \eqref{C-26} and \eqref{C-27} to see
\begin{align*}
|z_i^0 - z_j^0 | = |w_i^0 - e^{\mi(\theta_i^0 - \theta_j^0)}w_j^0| \leq |w_i^0 - w_j^0| + |e^{\mi(\theta_i^0 - \theta_j^0)}|-1|,
\end{align*}
which yields 
\begin{align*}
\sqrt{2\mathcal D(\mathcal R^0)} \leq \mathcal D(W^0) + 2\sin \frac{\mathcal D(\Theta^0)}{2} < \mathcal D(W^0)  + 2\sin \frac\delta 2.
\end{align*}
Thus, if  $\mathcal D(W^0)$ satisfy \eqref{C-30}, then initial conditions for  $\mathcal D(\mathcal R^0)$ and $\mathcal D(\Theta^0)$ are fulfilled. Consequently, it follows from Theorem \ref{T3.1} and Proposition \ref{P2.1} that there exists $z^\infty$ and $\theta^\infty$ such that
\[
\lim_{t\to\infty} z_j(t) = z^\infty,\quad \lim_{t\to\infty} \theta_j(t) = \theta_j^\infty. 
\]
Hence, we have
\[
\lim_{t\to\infty} w_j(t) = z^\infty e^{\mi \theta_j^\infty} =: w_j^\infty.
\]
Moreover, 
\[
\langle w_i^\infty,w_j^\infty\rangle = \langle  z^\infty e^{\mi \theta_i^\infty},  z^\infty e^{\mi \theta_j^\infty}\rangle = e^{\mi(\theta_i^\infty- \theta_j^\infty)}
\]
which has a unit modulus. 
\end{proof}

\begin{remark} \label{R3.2} 
We mention that $w_i(t)$ is asymptotically decomposed into two parts: $z^\infty$ as a common axis part and $e^{\mi\theta_i^\infty}$ as its own rotation part.  Heuristically, we would say that the asymptotic behavior of a high dimensional object $w_i(t)$ is determined by a one-dimensional object $\theta_i^\infty$. 
\end{remark}

In fact, we have studied the asymptotic behavior of $w_i$ on $\mathbb{H} \bbs^{d-1}$ by using auxiliary variables $z_i$ and $\theta_i$. However, we are essentially concerned with the asymptotic behavior of \eqref{C-1} where a solution $\{x_i\}$ to \eqref{C-1} lies on $\bbs^{d-1}$. Thus, our last step in this section is dedicated to representing Theorem \ref{T3.1} in terms of $x_i$.  

\begin{theorem} \label{T3.2} 
Consider the real swarm sphere model \eqref{C-1} on even dimension $2d$ whose solution is written as $x_j=(x_j^1,\cdots,x_j^{2d}) \in \bbs^{2d-1}$. Suppose that initial data and system parameters satisfy the following conditions: 
\begin{align*}
&\textup{(i)}~~ \kp >2\mathcal D(a), \quad 0<\delta<\frac{\mathcal D(a)}{\kp}<\beta. \\
&\textup{(ii)}~~ \Omega_j = \begin{pmatrix} A & B \\ - B & A \end{pmatrix} + a_j \begin{pmatrix} O_d &  I_d \\ - I_d & O_d  \end{pmatrix},\quad \forall~j\in[N],\\
& \textup{(iii)}~~ \max_{1\leq i,j\leq N}\Big( |(x_i^{1,0},\cdots, x_i^{d,0})-(x_j^{1,0},\cdots, x_j^{d,0})| + |(x_i^{d+1,0},\cdots, x_i^{2d,0})-(x_j^{d+1,0},\cdots, x_j^{2d,0})|\Big) \\
& \hspace{1cm}<\sqrt{\frac{2(1-2\delta)}{1+\delta}} -2\sin \frac\delta 2,
\end{align*}
where $A$ and $B$ are $d\times d$ skew-symmetric and symmetric matrices, respectively, and $\delta,\mathcal D(a),\beta$ are defined in Theorem \ref{T3.1}. Then, there exists $x_i^\infty \in \bbs^{2d-1}$ such that
\[
\lim_{t\to\infty} x_j(t) = x_j^\infty,\quad i\in [N].
\]
In addition, if we decompose $x_j^\infty = (y_j^\infty, z_j^\infty) \in \bbr^{d}\times \bbr^d$, then we have
\[
|\langle x_i^\infty,x_j^\infty\rangle|^2 + |\langle z_i^\infty,y_j^\infty\rangle - \langle y_i^\infty,z_j^\infty\rangle|^2 =1,\quad i,j\in [N].
\]
\end{theorem}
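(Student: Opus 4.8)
The plan is to reduce Theorem~\ref{T3.2} to Theorem~\ref{T3.1} through the complexification of Lemma~\ref{L3.1}: condition~(ii) is precisely the algebraic hypothesis under which \eqref{C-1} passes to the complex sphere, and under which the real free flow collapses to the scalar phase rotation $\mi a_j$, so that the complexified configuration solves exactly the model \eqref{C-10}. Once there, Theorem~\ref{T3.1} applies verbatim, and it remains only to read its conclusions back through the dictionary $x_j=(y_j,z_j)\leftrightarrow\omega_j=y_j+\mi z_j$.

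First I would check that condition~(ii) fits the template of Lemma~\ref{L3.1}. Writing
\[
\Omega_j=\begin{pmatrix} A & B+a_jI_d \\ -(B+a_jI_d) & A \end{pmatrix},
\]
we read off $A_j=C_j=A\in\Skew_d(\bbr)$ and $B_j=B+a_jI_d\in\Sym_d(\bbr)$, so \eqref{B-4}--\eqref{B-5} hold and Lemma~\ref{L3.1} turns \eqref{C-1} into \eqref{B-6} for $\omega_j:=(x_j^1,\dots,x_j^d)+\mi(x_j^{d+1},\dots,x_j^{2d})$, with $\Xi_j=A-\mi(B+a_jI_d)=(A-\mi B)-\mi a_jI_d$ and $|\omega_j(t)|\equiv1$. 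The common skew-Hermitian matrix $\Xi_0:=A-\mi B$ generates the uniform unitary rotation $e^{t\Xi_0}$; putting $w_j:=e^{-t\Xi_0}\omega_j$ and using that $e^{t\Xi_0}$ is unitary (hence leaves the nonlinear interaction term and the inner products $\textup{Re}\langle\cdot,\cdot\rangle$ invariant) while $I_d$ commutes with $e^{t\Xi_0}$, a one-line computation shows that $w_j$ solves exactly \eqref{C-10} with natural frequencies $\{-a_j\}$. Since replacing $a_j$ by $-a_j$ changes neither $\mathcal D(a)$ nor the zero-sum normalization, every hypothesis of Theorem~\ref{T3.1} on the frequencies is untouched, and $w_j(t)\in\bbh\bbs^{d-1}$; in the normalization $\Xi_0=O_d$ underlying the method this change of frame is vacuous and $w_j=\omega_j$.

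Next I would translate the initial datum. Since $w_j^0=\omega_j^0$, the triangle inequality gives
\[
|w_i^0-w_j^0|=\big|(y_i^0-y_j^0)+\mi(z_i^0-z_j^0)\big|\le|y_i^0-y_j^0|+|z_i^0-z_j^0|,
\]
with $y_j^0=(x_j^{1,0},\dots,x_j^{d,0})$ and $z_j^0=(x_j^{d+1,0},\dots,x_j^{2d,0})$, so condition~(iii) yields $\mathcal D(W^0)<\sqrt{2(1-2\delta)/(1+\delta)}-2\sin(\delta/2)$; together with condition~(i) this is the full hypothesis of Theorem~\ref{T3.1}. Invoking it produces $w_j^\infty\in\bbh\bbs^{d-1}$ with $\lim_{t\to\infty}w_j(t)=w_j^\infty$ and $|\langle w_i^\infty,w_j^\infty\rangle|=1$. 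Undoing the reduction (here $\omega_j=w_j$) gives $\lim_{t\to\infty}\omega_j(t)=w_j^\infty=:\omega_j^\infty$, hence $\lim_{t\to\infty}x_j(t)=x_j^\infty:=(y_j^\infty,z_j^\infty)\in\bbs^{2d-1}$ where $\omega_j^\infty=y_j^\infty+\mi z_j^\infty$. Finally, using $\langle x_i,x_j\rangle=\langle y_i,y_j\rangle+\langle z_i,z_j\rangle$ from Lemma~\ref{L3.1},
\[
\langle\omega_i^\infty,\omega_j^\infty\rangle_\bbc=\langle x_i^\infty,x_j^\infty\rangle+\mi\big(\langle z_i^\infty,y_j^\infty\rangle-\langle y_i^\infty,z_j^\infty\rangle\big),
\]
so taking the modulus in $|\langle w_i^\infty,w_j^\infty\rangle|=1$ gives precisely $|\langle x_i^\infty,x_j^\infty\rangle|^2+|\langle z_i^\infty,y_j^\infty\rangle-\langle y_i^\infty,z_j^\infty\rangle|^2=1$.

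There is no deep analytic obstacle here: Theorem~\ref{T3.1} is the engine and the argument is essentially a translation. The only point demanding care is the complexification bookkeeping: checking that condition~(ii) leaves no residual matrix term in the free flow (this is exactly where the block structure of $\Omega_j$ in~(ii) is used), that the common unitary conjugation $e^{-t\Xi_0}$ preserves both the interaction term and the unit-modulus constraint, and that the passages between the real and complex inner products at $t=0$ and $t\to\infty$ are matched consistently with the convention $\langle x_i,x_j\rangle=\textup{Re}\langle\omega_i,\omega_j\rangle_\bbc$ of Lemma~\ref{L3.1}.
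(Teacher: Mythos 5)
Your route is the one the paper itself intends (Theorem \ref{T3.2} carries no written proof; it is meant to follow from Lemma \ref{L3.1} plus Theorem \ref{T3.1}), and most of your bookkeeping is correct: condition (ii) does put $\Omega_j$ in the form \eqref{B-4}--\eqref{B-5} with $A_j=C_j=A$ and $B_j=B+a_jI_d$; the resulting $\Xi_j=(A-\mi B)-\mi a_jI_d$ does carry the frequency with the opposite sign (harmless, as you note, since neither $\mathcal D(a)$ nor the zero-sum condition changes); the bound $|w_i^0-w_j^0|\le |y_i^0-y_j^0|+|z_i^0-z_j^0|$ correctly converts condition (iii) into the hypothesis on $\mathcal D(W^0)$ in Theorem \ref{T3.1}; and the identity $|\langle\omega_i,\omega_j\rangle_\bbc|^2=|\langle x_i,x_j\rangle|^2+|\langle z_i,y_j\rangle-\langle y_i,z_j\rangle|^2$ is exactly how the final claim should be read off.

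The genuine gap is in ``undoing the reduction.'' When $A$ or $B$ is nonzero, $\Xi_0:=A-\mi B$ is a nonzero skew-Hermitian matrix, so $e^{t\Xi_0}$ is a nontrivial one-parameter unitary group whose entries oscillate and do not converge as $t\to\infty$. Your change of frame correctly shows that $w_j=e^{-t\Xi_0}\omega_j$ solves \eqref{C-10} and hence converges, but this only gives $\omega_j(t)=e^{t\Xi_0}w_j(t)\sim e^{t\Xi_0}w_j^\infty$, which converges if and only if $\Xi_0 w_j^\infty=0$ --- and nothing in the $w$-dynamics, which does not see $\Xi_0$ at all, forces $w_j^\infty$ to lie in $\ker\Xi_0$. (A sanity check: for $N=1$ and $a_1=0$ the interaction vanishes and $x_1(t)=e^{t\Omega_1}x_1^0$ does not converge for generic nonzero $\Omega_1$.) Your sentence ``in the normalization $\Xi_0=O_d$ underlying the method this change of frame is vacuous'' silently restricts to $A=B=0$, whereas the statement allows arbitrary $A$ and $B$. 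So either the hypothesis $A=B=0$ must be imposed (this is what the paper's reduction tacitly does when it sets $\Xi=O_d$), or the conclusion must be weakened to convergence in the co-rotating frame, respectively to convergence of the rotation-invariant quantities $|\langle\omega_i,\omega_j\rangle|$, which do survive. As written, your argument proves the theorem only in that normalized case; for nonzero $A-\mi B$ the pointwise convergence asserted in the statement actually fails, and this should be flagged rather than elided.
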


\begin{remark}
If we observe
\begin{align*}
|x_i^0 - x_j^0| <  | (x_i^{1,0},\cdots, x_i^{d,0})-(x_j^{1,0},\cdots, x_j^{d,0})| + |(x_i^{d+1,0},\cdots, x_i^{2d,0})-(x_j^{d+1,0},\cdots, x_j^{2d,0})|,
\end{align*}
then the assumption on initial data in Theorem \ref{T3.2}(iii) merely becomes 
\[
\max_{1\leq i,j\leq N} |x_i^0 - x_j^0|\ll1.
\]
\end{remark} 

\subsection{Numeric simulations} In this subsection, we conduct numerical simulations to support our theoretical results presented in  Theorem \ref{T3.1}. For the numerical method,  the Runge--Kutta fourth-order method is employed to discretize system \eqref{C-10} with the time step $\Delta t=0.01$:
\begin{equation}\label{NU}
\dot{w}_j=\mathrm{i}a_jw_j+\frac{\kappa}{N}\sum_{k=1}^N\left(w_k-\mathrm{Re}\left(\langle w_k, w_j\rangle\right) w_j\right),\quad t>0,\quad j\in [N].
\end{equation}
 We fix the following parameters
\begin{align}\label{NU-p}
d=2,\quad N=4,\quad (a_1, a_2, a_3, a_4)=\left(-0.2831,   -0.0196,    0.0708,    0.2318\right),
\end{align}
where $\{a_i \}$ are chosen to satisfy zero-average with increasing order, i.e., $a_1<a_2<a_3<a_4$. In addition, initial data $\{w_i^0\}_{i=1}^4$  are randomly chosen from $\bbh\bbs^1 \subseteq \bbc^2$. 
\begin{align*}
&w_1^0=(0.3895 - 0.9178i,  -0.0770 + 0.0004i),\quad
w_2^0=(-0.5190 + 0.4832i,  -0.5974 - 0.3746i),\\
&w_3^0=(-0.2123 - 0.8137i,  -0.5232 + 0.1381i),\quad
w_4^0=(0.1684 + 0.3132i,   0.6192 - 0.7001i).
\end{align*}


\begin{figure}[h]
\centering
\mbox{
\subfigure[Evolution of $\dot w_i$]{  
\includegraphics[width=0.5\textwidth]{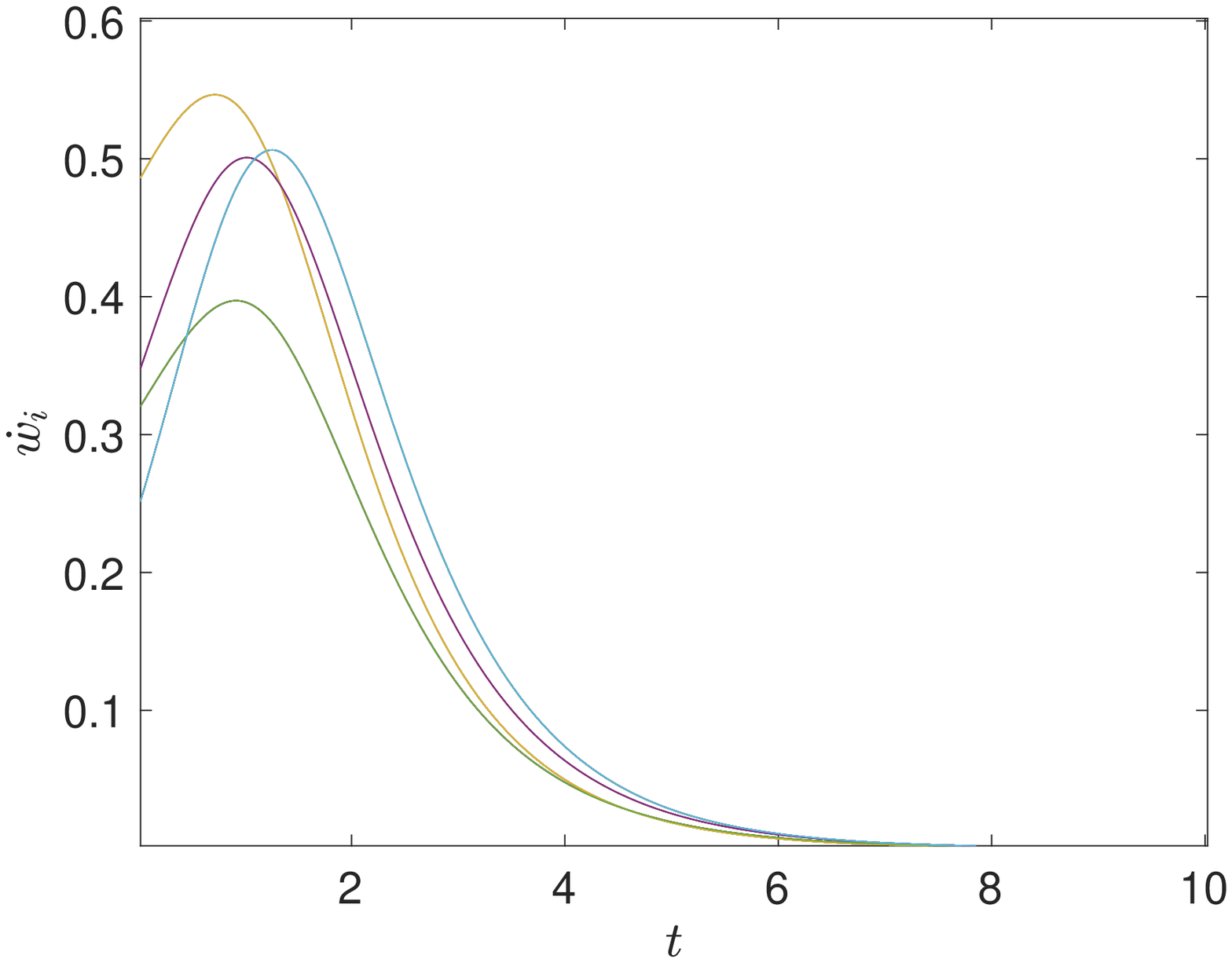}\label{fig:1a}}
\subfigure[ Evolution of $\log|\dot w_i|$]{  
\includegraphics[width=0.5\textwidth]{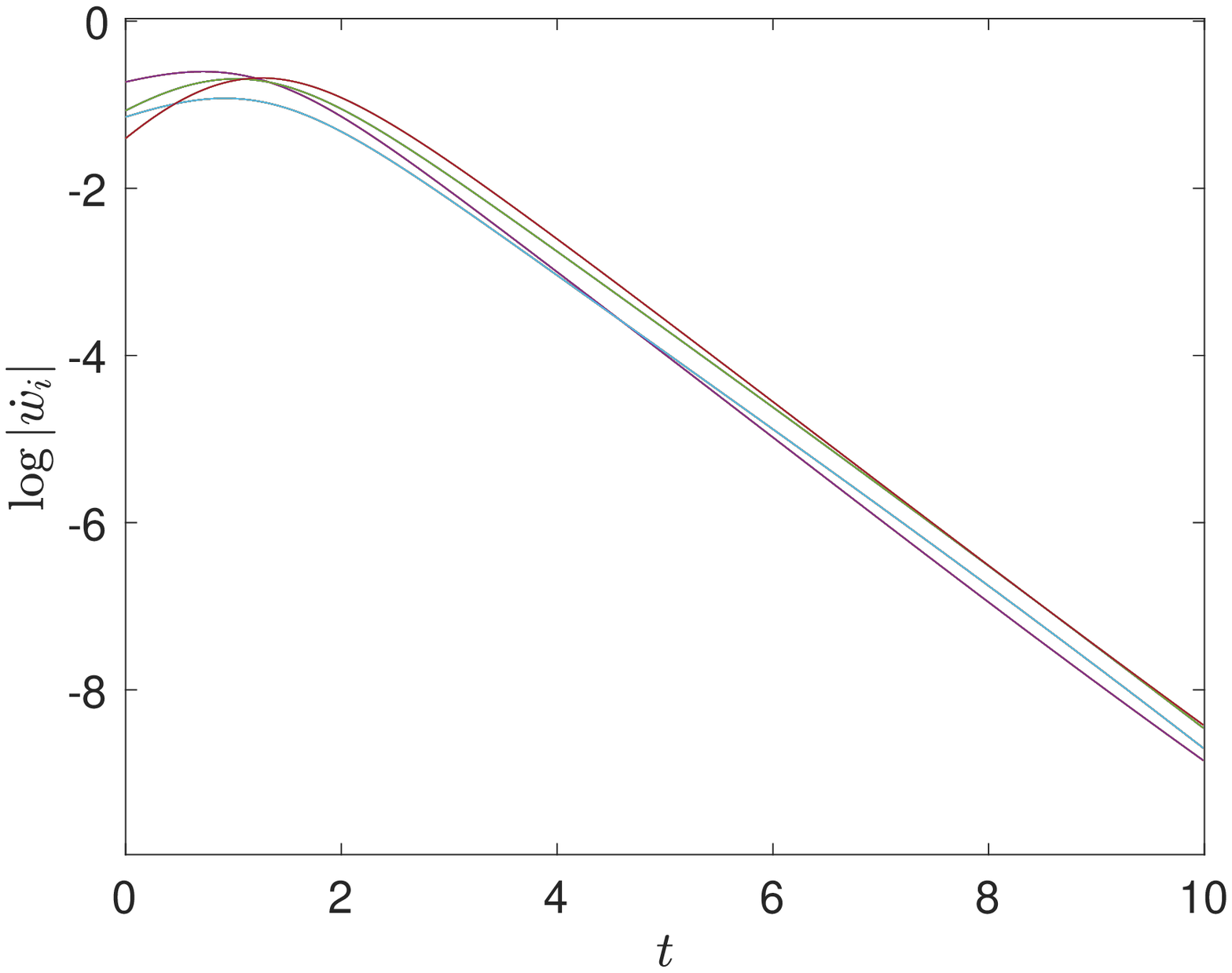}\label{fig:1b}} }
\mbox{
\subfigure[Evolution of $|\langle w_i,w_j\rangle|$]{  
\includegraphics[width=0.5\textwidth]{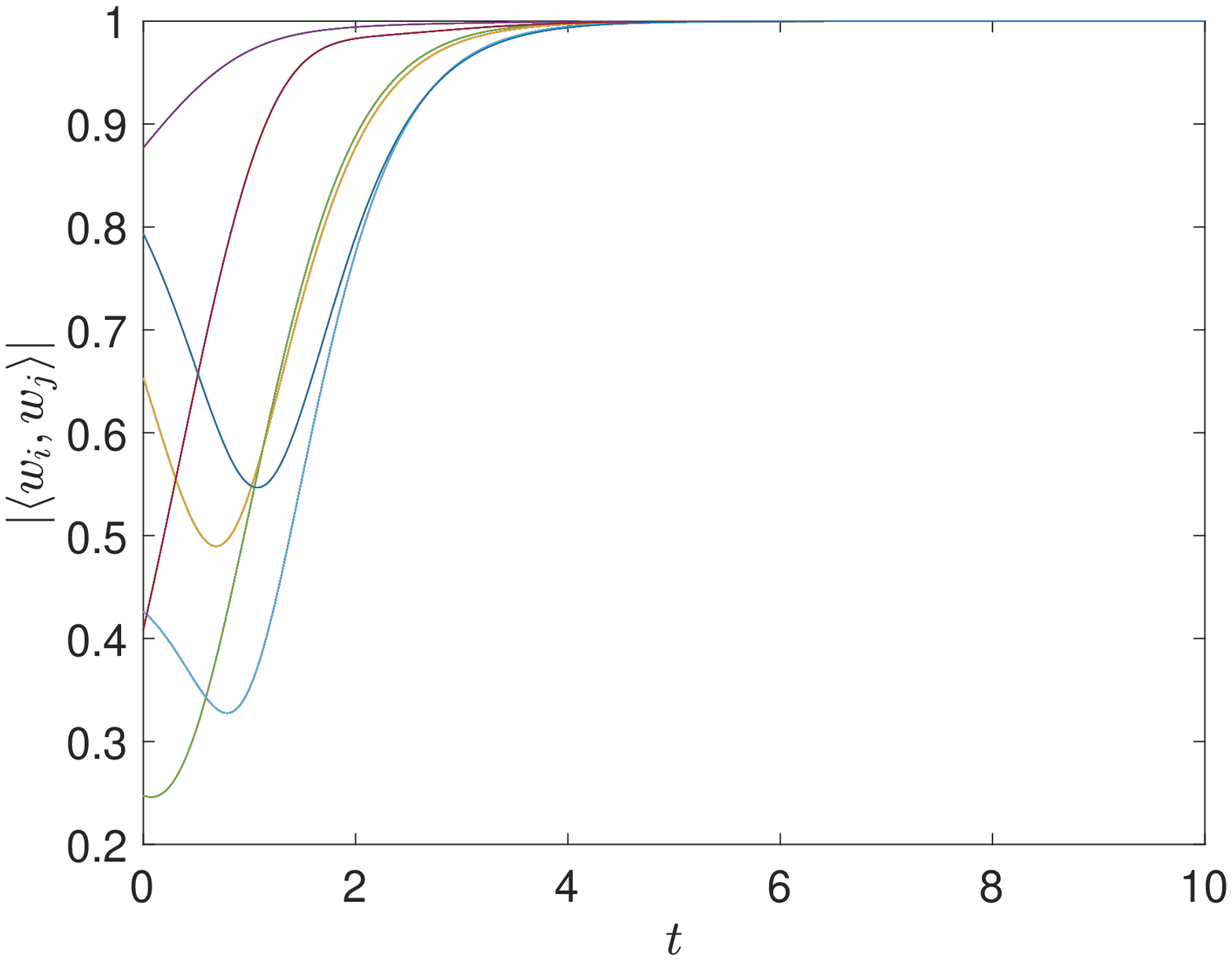}\label{fig:1c}}
\subfigure[ Evolution of geodesic distances]{  
\includegraphics[width=0.5\textwidth]{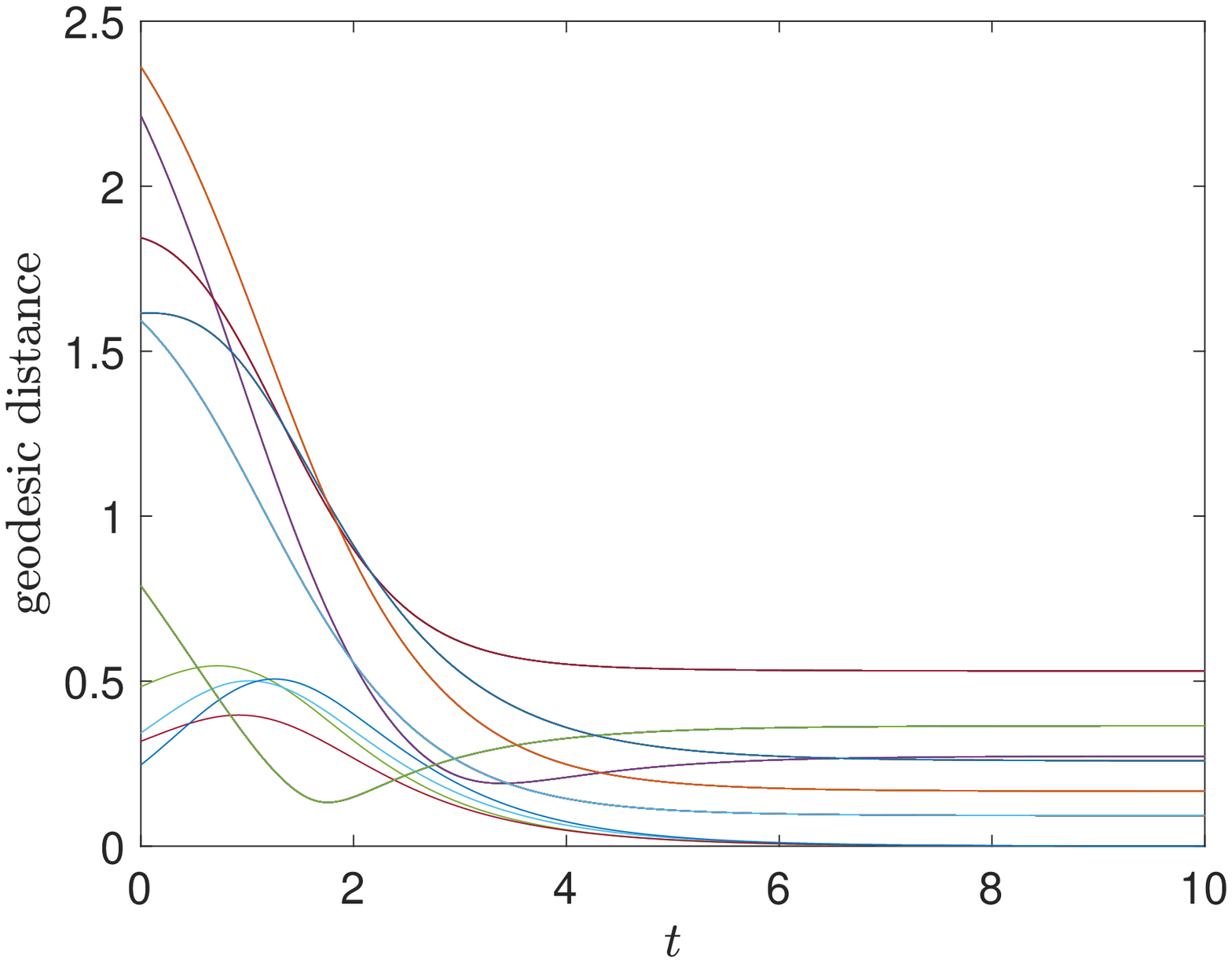}\label{fig:1d}} }
\caption{Numeric simulations of system \eqref{NU} with parameters \eqref{NU-p}.}  \label{fig1}
\end{figure}

In Figure \ref{fig:1a} and Figure \ref{fig:1b}, we see that $\dot w_i$ converges to zero exponentially and this shows that each $w_i$ converges to definite value. From the numerical simulation, we find the final configurations for $w_i$: 
\begin{align*}
&w_1^\infty=(-0.5002 - 0.4755i,  -0.3890 - 0.6102i),\quad
w_2^\infty=(-0.3537 - 0.5926i,  -0.2104 - 0.6924i),\\
&w_3^\infty=(-0.2977 - 0.6226i,  -0.1458 - 0.7089i),\quad
w_4^\infty=(-0.1906 - 0.6633i,  -0.0265 - 0.7232i),
\end{align*}
and direct calculation yields 
\[
|\langle w_i^\infty, w_j^\infty\rangle|=1\quad\forall~i, j\in\{1, 2, 3, 4\}.
\]
In Figure \ref{fig:1c}, we also provide temporal evolutions for $|\langle w_i, w_j\rangle|$, and this coincides with the calculations above. On the other hand, we define  a $4\times 4$ matrix $\Delta\Theta^\infty=(\Delta\Theta^\infty_{ij})_{i,j=1}^4$ where  $\Delta\Theta^\infty_{ij}$ is a geodesic distance between $w_i^\infty$ and $w_j^\infty$ on the hermitian sphere $\mathbb{HS}^1\simeq \mathbb{S}^{3}$, i.e., 
\[
\Delta\Theta^\infty_{ij}=\arccos\left(1-\frac{|w_i^\infty-w_j^\infty|^2}{2}\right), \quad\forall~i, j\in\{1, 2, 3, 4\}.
\]
Since $\Delta\Theta^\infty$ is calculated as 
\[
\Delta\Theta^\infty=\begin{bmatrix}
         0    &0.2726   & 0.3647    &0.5310\\
    0.2726         &0    &0.0922    &0.2584\\
    0.3647    &0.0922         &0    &0.1662\\
    0.5310    &0.2584   &0.1662     &    0
\end{bmatrix},
\]
one can easily check
\begin{align*}
\begin{cases}
\Delta\Theta^\infty_{13}&=\Delta\Theta^\infty_{12}+\Delta\Theta^\infty_{23},\\
\Delta\Theta^\infty_{24}&=\Delta\Theta^\infty_{23}+\Delta\Theta^\infty_{34},\\
\Delta\Theta^\infty_{14}&=\Delta\Theta^\infty_{12}+\Delta\Theta^\infty_{23}+\Delta\Theta^\infty_{34}.
\end{cases}
\end{align*}
This shows that $w_1^\infty$, $w_2^\infty$, $w_3^\infty$, and $w_4^\infty$ are on the same geodesic in order of $\{a_i\}$.  Furthermore, we can check that $\Theta=(\theta_i)_{i=1}^N$ defined by
\begin{equation} \label{C-50}
\theta_i=\varphi+\Theta_{1i}\quad\forall~i\in\{1, 2, 3, 4\},
\end{equation} 
is a steady state of system \eqref{C-10-1}. Finally, we conclude that \eqref{C-50}  gives the relationship between \eqref{C-10} and \eqref{C-10-1}.

\section{Convergence toward equilibrium on the complex unit sphere} \label{sec:4} 
\setcounter{equation}{0}
In this section, we investigate the detailed asymptotic behavior of the Schr\"odinger--Lohe model and the Lohe Hermitian model whose states are represented in general by complex numbers. We continue to apply our dimension reduction method. 

\subsection{Schr\"odinger--Lohe model} 
In this subsection, we consider the heterogeneous  Schr\"odinger--Lohe model with a specific structure:
\[
V_j(x) = V(x) + a_j,\quad a_j\in \bbr,\quad  j\in [N].
\]
In other words, external potentials are given as a (small) perturbation $\{a_j\}$ of a common potential $V(x)$. Furthermore, as in the previous section, we assume that the perturbation $a_j$ has zero average.  Then, our model reads as
\[
\mi \p_t \psi_j = \left( -\frac12 \Delta + V\right)\psi_j + a_j \psi_j + \frac{\mi\kp}{ N} \sum_{k=1}^N ( \psi_k - \langle \psi_j,\psi_k\rangle \psi_j).
\]
Since $e^{-\mi ( -\frac12\Delta + V)}$ is a unitary operator, it follows from the solution splitting property that we may assume $\psi_j = \psi_j(t)$ satisfying
\begin{equation} \label{D-1}
\dot \psi_j =  \mi a_j \psi_j + \frac\kp N \sum_{k=1}^N ( \psi_k - \langle \psi_j,\psi_k\rangle \psi_j).
\end{equation}
Although $\psi_j(t) \in \bbc$, it can be extend to $\bbc^d$ without any effort. In this case, the inner product is given as the standard one in $\bbc^d$. As discussed in the previous section, we associate the Kuramoto model whose natural frequencies are determined as $\{a_j\}$ with zero-average natural frequency:
\begin{equation} \label{D-2}
\dot \theta_j = a_j + \frac{2\kp}{N} \sum_{k=1}^N \sin (\theta_k-\theta_j).
\end{equation} 
It is worthwhile mentioning that the coupling strength in \eqref{D-2} is doubled compared to the (original) Kuramoto (see Remark \ref{R4.1}). We define 
\[
\xi_j(t):= \psi_j(t) e^{-\mi \theta_j(t)}.
\]

\begin{lemma}
Let $\psi_j$ and $\theta_j$ be solutions to \eqref{D-1} and \eqref{D-2}, respectively. Then, $\xi _j= \psi_j e^{-\mi \theta_j}$ satisfies
\begin{equation} \label{D-4}
\dot \xi_j = \frac\kp N \sum_{k=1}^N \Big[ e^{\mi(\theta_k - \theta_j)} (\xi_k -\xi_j)- \langle     \xi_j,e^{\mi(\theta_k-\theta_j)}(\xi_k -\xi_j)\rangle \xi_j \Big],\quad t>0,\quad j\in [N].
\end{equation}
In addition, temporal evolution of  $h_{ij}:= \langle \xi_i,\xi_j\rangle$ is given by
\begin{align} \label{D-5}
\begin{aligned}
\dot h_{ij} &=\frac\kp N\sum_{k=1}^N  \Big[ e^{\mi (\theta_k - \theta_i)} (h_{kj} - h_{ij}) + e^{\mi (\theta_i - \theta_k)} (1-h_{ik})h_{ij} \\
&\hspace{2cm} + e^{\mi (\theta_j - \theta_k)} (h_{ik} - h_{ij}) + e^{\mi (\theta_k - \theta_j)} (1-h_{kj})h_{ij} \Big].
\end{aligned}
\end{align} 
\end{lemma}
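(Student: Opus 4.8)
The statement is a pure computation: differentiate the two definitions $\xi_j = \psi_j e^{-\mathrm{i}\theta_j}$ and $h_{ij} = \langle \xi_i, \xi_j\rangle$ and substitute the governing equations \eqref{D-1} and \eqref{D-2}. The strategy is exactly parallel to the proof of \eqref{C-11} given earlier, so I would mimic that argument step by step. First I would establish \eqref{D-4}: writing $\dot\xi_j = \dot\psi_j e^{-\mathrm{i}\theta_j} - \mathrm{i}\dot\theta_j \xi_j$, I substitute \eqref{D-1} for $\dot\psi_j$ and \eqref{D-2} for $\dot\theta_j$. The free-flow term $\mathrm{i}a_j\psi_j e^{-\mathrm{i}\theta_j} = \mathrm{i}a_j\xi_j$ cancels against the $\mathrm{i}a_j\xi_j$ coming from $-\mathrm{i}\dot\theta_j\xi_j$, exactly as in the $z_j$ computation. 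In the interaction sum, $\psi_k e^{-\mathrm{i}\theta_j} = e^{\mathrm{i}(\theta_k-\theta_j)}\xi_k$ and $\langle\psi_j,\psi_k\rangle = \langle\xi_j,\xi_k\rangle e^{\mathrm{i}(\theta_j-\theta_k)}$ after pulling out the phases; combining with the $-\frac{2\kappa}{N}\sum_k \sin(\theta_k-\theta_j)\xi_j$ term, the $\sin$ contributions merge with $\mathrm{Re}$-type terms to produce the clean difference form $e^{\mathrm{i}(\theta_k-\theta_j)}(\xi_k-\xi_j)$. The only bookkeeping subtlety is verifying that the doubled coupling constant in \eqref{D-2} is precisely what is needed to absorb the two phase-factor contributions (one from the $\psi_k$ term, one from the inner-product term) --- this is why the coupling is doubled, as flagged in the remark --- and I would isolate this as the single place requiring care.

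\textbf{Second equation.} For \eqref{D-5} I differentiate $h_{ij} = \langle\xi_i,\xi_j\rangle$ using $\dot h_{ij} = \langle\dot\xi_i,\xi_j\rangle + \langle\xi_i,\dot\xi_j\rangle$ and plug in \eqref{D-4}. The first bracket gives $\frac{\kappa}{N}\sum_k[\langle e^{\mathrm{i}(\theta_k-\theta_i)}(\xi_k-\xi_i),\xi_j\rangle - \langle\langle\xi_i,e^{\mathrm{i}(\theta_k-\theta_i)}(\xi_k-\xi_i)\rangle\xi_i,\xi_j\rangle]$; rewriting the inner products in terms of $h$ (using $\langle\xi_k,\xi_j\rangle = h_{kj}$, $\langle\xi_i,\xi_j\rangle = h_{ij}$, etc.) and carefully tracking which phase factor sits on which term yields the first two summands of \eqref{D-5}. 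The $\langle\xi_i,\dot\xi_j\rangle$ piece produces the remaining two summands by the same manipulation, with the conjugation of the scalar inner product turning $e^{\mathrm{i}(\theta_k-\theta_j)}$ into its conjugate in the appropriate spots. I should double-check the index/phase pattern: the term with $h_{kj}-h_{ij}$ should carry $e^{\mathrm{i}(\theta_k-\theta_i)}$, the term with $(1-h_{ik})h_{ij}$ should carry $e^{\mathrm{i}(\theta_i-\theta_k)}$, and symmetrically for the $j$-side, which is consistent with the stated formula.

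\textbf{Main obstacle.} There is no conceptual difficulty here; the only real risk is sign and phase errors in the lengthy algebra, particularly keeping straight the non-symmetry of the complex inner product $\langle\cdot,\cdot\rangle$ (conjugate-linear in the second slot, by the paper's convention) when moving phase factors $e^{\pm\mathrm{i}(\theta_k-\theta_j)}$ and the scalar $\langle\xi_i,\cdot\rangle$ in and out. I would organize the computation by first proving \eqref{D-4} cleanly, then treating \eqref{D-5} in two halves ($\langle\dot\xi_i,\xi_j\rangle$ and $\langle\xi_i,\dot\xi_j\rangle$) and finally symmetrizing, so that each phase factor is introduced exactly once and the cancellation of the free-flow frequencies $a_j$ is visibly automatic.
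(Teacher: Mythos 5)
Your plan is correct and is exactly the computation the paper has in mind (the paper's own proof is just ``direct calculation''): the key identity $2\mathrm{i}\sin(\theta_k-\theta_j)=e^{\mathrm{i}(\theta_k-\theta_j)}-e^{\mathrm{i}(\theta_j-\theta_k)}$ splits the doubled-coupling Kuramoto term into the two pieces absorbed by the $\psi_k$ term and the conjugate-linear inner-product term, and your stated phase/index pattern for \eqref{D-5} checks out. The only nitpick is the phrase about ``$\mathrm{Re}$-type terms,'' which belongs to the real swarm sphere case of Section 3 rather than here, where the full complex inner product appears; this does not affect the argument.
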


\begin{proof}
It directly follows from straightforward calculation. 
\end{proof}

We denote the maximal diameter 
\[
\mathcal D(\mathcal H(t)):= \max_{1\leq i,j\leq N} |1-h_{ij}(t)|,\quad h_{ij}(t) = \langle \xi_i,\xi_j\rangle(t),\quad t>0. 
\]
Since the following relation holds due to the unit modulus property of $\xi_i$, 
\[
|\xi_i- \xi_j|^2 = 2(1-\textup{Re}\langle \xi_i,\xi_j\rangle)   \leq 2|1-h_{ij}|,
\]
it suffices to focus on $1-h_{ij}$ instead of the relative distance $|\xi_i-\xi_j|^2$. Below, we derive a differential inequality for $\mathcal D(\mathcal H)$. 

\begin{lemma}
Let $\{\xi_i\}$ be a solution to \eqref{D-4}. Then, the maximal diameter $\mathcal D(\mathcal H)$ satisfies 
\begin{equation} \label{D-6}
\frac\d\dt\mathcal D(\mathcal H) \leq   -2\kp \left( 1-6\sin \frac{\mathcal D(\Theta)}{2} \right)\mathcal D(\mathcal H)+ 2\kp \mathcal D(\mathcal H)^2,\quad t>0. 
\end{equation}

\end{lemma}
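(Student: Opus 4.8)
The plan is to control the maximal diameter $\mathcal D(\mathcal H)$ by first passing from $\mathcal D(\mathcal H)$ to a fixed pair $(i,j)$ realizing the maximum at a given time, and then bounding each term in the evolution \eqref{D-5} of $1-h_{ij}$. Fix $t>0$ and choose indices $(i,j)$ with $|1-h_{ij}(t)| = \mathcal D(\mathcal H)(t)$; since $\mathcal D(\mathcal H)$ is a maximum of finitely many smooth functions it is locally Lipschitz and differentiable a.e., and at such a point $\tfrac{\d}{\dt}\mathcal D(\mathcal H) = \tfrac{\d}{\dt}|1-h_{ij}| \le \big|\tfrac{\d}{\dt}(1-h_{ij})\big| \cdot$ sign-type bound; more precisely I would write $\tfrac12 \tfrac{\d}{\dt}|1-h_{ij}|^2 = \mathrm{Re}\big[(\overline{1-h_{ij}})(-\dot h_{ij})\big]$ and divide by $|1-h_{ij}| = \mathcal D(\mathcal H)$, so the task reduces to estimating $\mathrm{Re}\big[(\overline{1-h_{ij}})(-\dot h_{ij})\big]$ from above by the right-hand side of \eqref{D-6} times $\mathcal D(\mathcal H)$.

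Next I would rewrite $\dot h_{ij}$ from \eqref{D-5} by splitting off the "Kuramoto-free" part: replace each factor $e^{\mi(\theta_k-\theta_i)}$ by $1 + (e^{\mi(\theta_k-\theta_i)}-1)$ and similarly for the other phase factors. The terms coming from the constant $1$ reproduce exactly the gradient-flow dynamics one would get with $a_j\equiv 0$; these should collapse, after summing over $k$ and using $h_{kj}-h_{ij}$, $h_{ik}-h_{ij}$, $(1-h_{ik})h_{ij}$, $(1-h_{kj})h_{ij}$, into the clean dissipative expression $-2\kp(1-h_{ij}) + (\text{quadratic in } 1-h_{\bullet\bullet})$; pairing with $\overline{1-h_{ij}}$ and using that $(i,j)$ is extremal gives the $-2\kp\,\mathcal D(\mathcal H) + 2\kp\,\mathcal D(\mathcal H)^2$ skeleton. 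The remaining terms carry a factor $(e^{\mi\varphi}-1)$ with $|\varphi|\le \mathcal D(\Theta)$, so by \eqref{C-18} each is bounded by $2\big|\sin\tfrac{\mathcal D(\Theta)}{2}\big|$ times a quantity of size $\mathcal D(\mathcal H)$ (using $|h_{kj}-h_{ij}|\le 2\mathcal D(\mathcal H)$, $|1-h_{ik}|\le\mathcal D(\mathcal H)$, etc., and $|\xi_i|=1$); there are six such error terms, which is precisely where the constant $6$ in $1-6\sin\tfrac{\mathcal D(\Theta)}{2}$ comes from. Collecting, $\mathrm{Re}\big[(\overline{1-h_{ij}})(-\dot h_{ij})\big] \le -2\kp\big(1-6\sin\tfrac{\mathcal D(\Theta)}{2}\big)\mathcal D(\mathcal H)^2 + 2\kp\,\mathcal D(\mathcal H)^3$, and dividing by $\mathcal D(\mathcal H)$ yields \eqref{D-6}.

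The main obstacle is the bookkeeping in the second step: one must verify that the "$1$-part" of \eqref{D-5} genuinely telescopes to $-2\kp(1-h_{ij})$ plus only quadratic corrections, and that the quadratic corrections, after taking real parts against $\overline{1-h_{ij}}$ at the extremal pair, are dominated by $2\kp\,\mathcal D(\mathcal H)^2$ rather than a larger constant — this requires careful use of the extremality $\mathrm{Re}[(\overline{1-h_{ij}})(1-h_{ik})] \le |1-h_{ij}|\,|1-h_{ik}| \le \mathcal D(\mathcal H)^2$ and similar, together with the identity $|1-h_{ij}|^2 = (1-\mathrm{Re}\,h_{ij})^2 + (\mathrm{Im}\,h_{ij})^2$ and the unit-modulus bound $|h_{ij}|\le 1$ to keep imaginary parts under control. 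I also need to double-check the coupling-strength normalization: because the Schr\"odinger--Lohe interaction term here has coefficient $\tfrac{\kp}{N}$ (not $\tfrac{2\kp}{N}$), each of the four summands in \eqref{D-5} contributes at order $\kp$, and it is the pairing of $z_i$ against $z_j$ from both sides that produces the factor $2$ in front — this must be tracked consistently so that the final $-2\kp$ and $+2\kp$ coefficients are correct. The sharp count of error terms (six, not eight or four) is the delicate point and should be done by writing $\mathcal A := (e^{\mi(\theta_k-\theta_i)}-1)(\xi_k-\xi_j)$-type quantities explicitly, mirroring \eqref{C-12-1}.
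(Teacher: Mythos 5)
Your proposal is correct and follows essentially the same route as the paper: split each phase factor as $1+(e^{\mi\varphi}-1)$, observe that the $1$-part of \eqref{D-5} telescopes to $-2\kp(1-h_{ij})+\frac{\kp}{N}\sum_k(1-h_{ik}+1-h_{kj})(1-h_{ij})$, and bound the four remainder terms via \eqref{C-18} with effective multiplicities $2+2+1+1=6$ (the two difference terms $h_{kj}-h_{ij}$, $h_{ik}-h_{ij}$ each costing $2\mathcal D(\mathcal H)$), yielding the $12\kp\sin\frac{\mathcal D(\Theta)}{2}\mathcal D(\mathcal H)$ correction. Your extra care in differentiating $|1-h_{ij}|$ at the extremal pair via $\mathrm{Re}\big[(\overline{1-h_{ij}})\,\dot h_{ij}\big]/|1-h_{ij}|$ is a detail the paper glosses over, but it changes nothing in the estimate.
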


\begin{proof}
We use \eqref{D-5} to find 
\begin{align} \label{D-6-1}
\begin{aligned}
 \frac\d\dt(1-h_{ij})& = -2\kp(1-h_{ij}) + \frac\kp N \sum_{k=1}^N (1-h_{ik} + 1-h_{kj})(1-h_{ij}) \\
& \hspace{0.5cm}- \frac\kp N \sum_{k=1}^N \Big[ (e^{\mi (\theta_k - \theta_i)}-1) (h_{kj}- h_{ij}) +  (e^{\mi (\theta_j - \theta_k)}-1) (h_{ik}- h_{ij}) \\
&\hspace{2.2cm} +  (e^{\mi (\theta_i - \theta_k)}-1)(1-h_{ik})h_{ij} +  (e^{\mi (\theta_k - \theta_j)}-1) (1-h_{kj})h_{ij}\Big].
\end{aligned}
\end{align}
Then, \eqref{D-6-1} and \eqref{C-18} give
\begin{align*}
\frac\d\dt\mathcal D(\mathcal H) &\leq -2\kp \mathcal D(\mathcal H) + 2\kp \mathcal D(\mathcal H)^2 + 12\kp \sin \frac{\mathcal D(\Theta)}{2} \mathcal D(\mathcal H) \\
& = -2\kp \left( 1-6\sin \frac{\mathcal D(\Theta)}{2} \right)\mathcal D(\mathcal H)+ 2\kp \mathcal D(\mathcal H)^2
\end{align*}

\end{proof} 

We are now ready to provide a sufficient condition leading to the zero convergence of $\mathcal D(\mathcal H)$ under a large coupling strength regime. 
\begin{proposition} \label{P4.1} 
Suppose that the coupling strength (sufficiently large in the sense below) and natural frequencies satisfy 
\begin{equation*} \label{D-10}
\kp > \frac32\mathcal D(a) ,\quad \sum_{i=1}^N a_i =0,
\end{equation*} 
and that there exists a positive constant $\delta \in (0,\frac13)$ such that
\begin{equation} \label{D-11}
 \mathcal D(\Theta^0)< \delta <\frac{\mathcal D(a)}{2\kp}<\frac13 \quad \textup{and}\quad \mathcal D(\mathcal H^0)< 1-3\delta .
\end{equation} 
Let $\{\xi_j\}$ and $\{\theta_j\}$ be solutions to \eqref{D-4} and \eqref{D-2}, respectively. Then, the maximal diameter converges to zero exponentially:
\[
\lim_{t\to\infty} \mathcal D( \mathcal H(t))=0.
\]
Moreover, there exists $\xi^\infty \in \bbh\bbs^{d-1}$ such that
\[
\lim_{t\to\infty} \xi_j(t) = \xi^\infty,\quad j\in [N]. 
\]

\end{proposition}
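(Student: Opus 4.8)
The plan is to mirror the proof of Proposition \ref{P3.1}, replacing the pair \eqref{C-11}, \eqref{C-10-1} by \eqref{D-4}, \eqref{D-2} and keeping careful track of the factor $2$ in the coupling strength of the auxiliary Kuramoto flow.

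First I would invoke Proposition \ref{P2.1}(i) for the Kuramoto model \eqref{D-2}, whose coupling constant is $2\kp$ and whose natural frequencies $\{a_j\}$ have zero sum. Because $\mathcal D(\Theta^0)$ sits below the relevant threshold --- this is exactly where the hypothesis \eqref{D-11} on $\mathcal D(\Theta^0)$ and $\mathcal D(a)/(2\kp)$ is used --- the phase diameter stays uniformly small, so that $\mathcal D(\Theta(t))$ never grows enough to spoil $6\sin\frac{\mathcal D(\Theta(t))}{2}\le 3\delta$ for all $t\ge 0$. Substituting this into the differential inequality \eqref{D-6} for $\mathcal D(\mathcal H)$ turns it into the autonomous Riccati-type inequality
\[
\frac{\d}{\dt}\mathcal D(\mathcal H)\le -2\kp(1-3\delta)\,\mathcal D(\mathcal H)+2\kp\,\mathcal D(\mathcal H)^2 ,\qquad t>0 .
\]
Since $\mathcal D(\mathcal H^0)<1-3\delta$ by \eqref{D-11}, a comparison with the scalar ODE $\dot v=-2\kp(1-3\delta)v+2\kp v^2$, whose equilibria are $0$ and $1-3\delta$ with the former attracting on $[0,1-3\delta)$, shows that $\mathcal D(\mathcal H(t))$ decreases and tends to $0$ at an exponential rate (roughly $O(e^{-2\kp(1-3\delta-\mathcal D(\mathcal H^0))t})$).

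For the convergence of the $\xi_j$ I would run the same $L^1$-in-time argument as in Proposition \ref{P3.1}. From \eqref{D-4}, together with $|\xi_j|\equiv 1$ and the elementary bound $|\xi_k-\xi_j|^2=2(1-\textup{Re}\,h_{kj})\le 2|1-h_{kj}|\le 2\mathcal D(\mathcal H)$, one obtains $|\dot\xi_j(t)|\le C\kp\sqrt{\mathcal D(\mathcal H(t))}$; since $\mathcal D(\mathcal H)$ decays exponentially so does its square root, hence $\dot\xi_j\in L^1(\bbr_+)$ and $\xi_j(t)\to\xi_j^\infty:=\xi_j^0+\int_0^\infty\dot\xi_j(s)\,\d s$. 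Finally $\mathcal D(\mathcal H(t))\to 0$ forces $|\xi_i(t)-\xi_j(t)|\to 0$, so by the triangle inequality all the limits agree, $\xi_i^\infty=\xi_j^\infty=:\xi^\infty$, and since $|\xi_j(t)|$ is conserved (equal to $|\psi_j^0|=1$) we get $\xi^\infty\in\bbh\bbs^{d-1}$.

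The main obstacle is the first step, i.e.\ transferring the smallness hypotheses into a uniform-in-time control of $\mathcal D(\Theta)$: because \eqref{D-4} and \eqref{D-2} are only weakly coupled through $\{a_j\}$, one must ensure that the phase diameter never grows enough to flip the sign of the linear coefficient $1-6\sin\frac{\mathcal D(\Theta)}{2}$ in \eqref{D-6}. This is precisely what Proposition \ref{P2.1}(i) provides, but one has to check that the doubled coupling $2\kp$ in \eqref{D-2} is what makes $\kp>\tfrac32\mathcal D(a)$ (rather than $\kp>2\mathcal D(a)$ as on the real sphere in Section \ref{sec:3}) sufficient, and that the threshold $1-3\delta$ appearing in the hypothesis on $\mathcal D(\mathcal H^0)$ is compatible with the bound on $\mathcal D(\Theta(t))$ coming from the Kuramoto diameter estimate. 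Once this compatibility is in place, the remaining Riccati/Grönwall and $L^1$-convergence steps are routine repetitions of the Section \ref{sec:3} argument.
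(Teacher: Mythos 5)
Your proposal follows essentially the same route as the paper's proof: Proposition \ref{P2.1}(i) applied to the Kuramoto flow \eqref{D-2} gives a uniform bound on $\mathcal D(\Theta)$, which turns \eqref{D-6} into the Riccati inequality $\frac{\d}{\dt}\mathcal D(\mathcal H)\le -2\kp(1-3\delta)\mathcal D(\mathcal H)+2\kp\mathcal D(\mathcal H)^2$, and then the bound $|\dot\xi_j|\lesssim\kp\sqrt{\mathcal D(\mathcal H)}$ with the $L^1$-in-time argument yields a common limit $\xi^\infty$. This matches the paper's argument step for step, so no further comparison is needed.
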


\begin{proof}
Since initial data satisfy  $\eqref{D-11}_1$, we use Proposition \ref{P2.1} to see 
\[
 \mathcal D(\Theta(t)) <\delta, \quad t>0. 
\]
Then, \eqref{D-6} becomes 
\[
\frac\d\dt\mathcal D(\mathcal H) \leq -2\kp ( 1- 3\delta)  \mathcal D(\mathcal H) +2\kp \mathcal D(\mathcal H)^2, \quad t>0. 
\]
Again, initial condition $\eqref{D-11}_2$ gives the desired exponential decay of $\mathcal D(\mathcal H)$ toward zero. For the second assertion, we observe from \eqref{D-4} 
\[
|\dot \xi_j | \leq 2\sqrt2\kp \sqrt{\mathcal D(\mathcal H)},
\]
which gives $|\dot \xi_j| \in L^1(\bbr_+)$. Thus, we have
\[
\lim_{t\to\infty} \xi_j(t) = \xi_j^0 + \int_0^ \infty \dot \xi_j(s) \d s  =: \xi_j^\infty.
\]
Since $\mathcal D(\mathcal H)$ tends to zero, we can easily deduce that all $\{\xi_i^\infty\}$ should be same. This shows the existence of the desired common vector $\xi^\infty$. 
\end{proof}

In Proposition \ref{P4.1}, we have shown that all $\xi_i$ collapse to a common stationary point $\xi^\infty$ under some initial framework. We are now concerned with the asymptotic behavior of \eqref{D-1}, which is our main interest in this subsection. For this, we denote
\[
\mathcal D(\Psi(t)) := \max_{1\leq i,j\leq N} |1-\langle \psi_i, \psi_j\rangle(t)|,\quad t>0. 
\]

\begin{theorem}
Suppose that the coupling strength and natural frequencies satisfy 
\[
\kp >\frac32\mathcal D(a), \quad \sum_{i=1}^N a_i=0,
\]
and that there exists $\delta \in (0,\beta)$ such that
\begin{equation} \label{D-15}
0<\delta <\frac{\mathcal D(a)}{\kp}<\beta, \quad \mathcal D(\Psi^0)< 1-3\delta-2\sin\frac\delta2,
\end{equation} 
where $\beta \approx 0.250$ is a unique positive root for $1-3s = 2\sin \frac s2$. 
Let $\{\psi_j\}$ be a solution to \eqref{D-1}. Then, there exists $\psi_j^\infty \in \bbh\bbs^{d-1}$ such that 
\[
\lim_{t\to\infty} \psi_j(t) = \psi_j^\infty,\quad j\in [N]. 
\]
In addition, such $\{\psi_j^\infty\}$ satisfy
\[
|\langle \psi_i^\infty, \psi_j^\infty\rangle|=1,\quad i,j\in [N].
\]
\end{theorem}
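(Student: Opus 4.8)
The plan is to follow exactly the template established for Theorem \ref{T3.1}: transfer the hypotheses on the physical variable $\psi_j$ into the hypotheses of Proposition \ref{P4.1} on the auxiliary variable $\xi_j$, invoke that proposition together with the Kuramoto convergence in Proposition \ref{P2.1}, and then recombine. First I would bound the initial diameter of $\mathcal H$ in terms of $\mathcal D(\Psi^0)$ and $\mathcal D(\Theta^0)$. Since $\xi_j^0 = \psi_j^0 e^{-\mi\theta_j^0}$, one has $1-h_{ij}^0 = 1 - \langle \psi_i^0,\psi_j^0\rangle e^{\mi(\theta_i^0-\theta_j^0)}$, so writing $1-h_{ij}^0 = (1-\langle\psi_i^0,\psi_j^0\rangle) + \langle\psi_i^0,\psi_j^0\rangle(1 - e^{\mi(\theta_i^0-\theta_j^0)})$ and using $|1-e^{\mi\varphi}| \le 2|\sin(\varphi/2)|$ from \eqref{C-18} together with $|\langle\psi_i^0,\psi_j^0\rangle|\le 1$ gives $\mathcal D(\mathcal H^0) \le \mathcal D(\Psi^0) + 2\sin(\mathcal D(\Theta^0)/2)$. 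Combined with $\mathcal D(\Theta^0) < \delta$ (which is the first half of \eqref{D-15}, since $\delta < \mathcal D(a)/\kp$ and we assume that quantity bounds the initial Kuramoto diameter) this yields $\mathcal D(\mathcal H^0) < \mathcal D(\Psi^0) + 2\sin(\delta/2) < 1-3\delta$ precisely when $\mathcal D(\Psi^0) < 1-3\delta-2\sin(\delta/2)$, which is \eqref{D-15}. The role of $\beta$ is that $\delta < \beta$ guarantees $1-3\delta - 2\sin(\delta/2) > 0$ so the admissible set for $\mathcal D(\Psi^0)$ is nonempty.

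Next I would verify that all remaining hypotheses of Proposition \ref{P4.1} hold: the coupling condition $\kp > \tfrac32\mathcal D(a)$ and zero average are assumed verbatim, and the chain $\mathcal D(\Theta^0) < \delta < \mathcal D(a)/(2\kp) < \tfrac13$ needed in \eqref{D-11} follows from \eqref{D-15} — note $\mathcal D(a)/\kp < \beta \approx 0.250 < \tfrac23$ forces $\mathcal D(a)/(2\kp) < \tfrac13$, and one shrinks $\delta$ if necessary so that $\delta < \mathcal D(a)/(2\kp)$; the condition $\mathcal D(\Theta^0) < \mathcal D(a)/\kp$ needed to start the Kuramoto estimate in Proposition \ref{P2.1}(i) is also contained in \eqref{D-15}. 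Then Proposition \ref{P4.1} gives a common limit $\xi^\infty \in \bbh\bbs^{d-1}$ with $\lim_{t\to\infty}\xi_j(t) = \xi^\infty$ for every $j$, and Proposition \ref{P2.1}(ii) — whose phase-locking hypothesis is met once the diameter is small and $\kp$ is large relative to $\mathcal D(a)$, which holds here — gives $\theta_j^\infty$ with $\lim_{t\to\infty}\theta_j(t) = \theta_j^\infty$.

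Finally, since $\psi_j(t) = \xi_j(t) e^{\mi\theta_j(t)}$, continuity of multiplication gives $\lim_{t\to\infty}\psi_j(t) = \xi^\infty e^{\mi\theta_j^\infty} =: \psi_j^\infty \in \bbh\bbs^{d-1}$, and then
\[
\langle \psi_i^\infty,\psi_j^\infty\rangle = \langle \xi^\infty e^{\mi\theta_i^\infty}, \xi^\infty e^{\mi\theta_j^\infty}\rangle = e^{\mi(\theta_i^\infty - \theta_j^\infty)} |\xi^\infty|^2 = e^{\mi(\theta_i^\infty-\theta_j^\infty)},
\]
which has unit modulus. I do not expect any genuine obstacle here — the argument is word-for-word parallel to Theorem \ref{T3.1}; the only point requiring a little care is the bookkeeping around $\beta$ and the two slightly different normalizations of the coupling strength ($\kp$ versus $2\kp$) entering \eqref{D-2}, so I would double-check that the interval $(0,\beta)$ for $\delta$ with $\beta$ the root of $1-3s = 2\sin(s/2)$ is indeed the sharp constraint making \eqref{D-15} consistent with \eqref{D-11}.
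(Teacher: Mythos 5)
Your proposal is correct and follows essentially the same route as the paper: bound $\mathcal D(\mathcal H^0)$ by $\mathcal D(\Psi^0) + 2\sin(\mathcal D(\Theta^0)/2)$ via the decomposition $1-h_{ij}^0 = (1-\langle\psi_i^0,\psi_j^0\rangle) + \langle\psi_i^0,\psi_j^0\rangle(1-e^{\mi(\theta_j^0-\theta_i^0)})$, verify the hypotheses of Proposition \ref{P4.1}, combine with Proposition \ref{P2.1}, and recombine $\psi_j^\infty = \xi^\infty e^{\mi\theta_j^\infty}$ to get unit modulus. Your extra care about reconciling $\delta < \mathcal D(a)/\kp$ in \eqref{D-15} with $\delta < \mathcal D(a)/(2\kp)$ in \eqref{D-11} (and the doubled coupling in \eqref{D-2}) is a legitimate bookkeeping point that the paper glosses over, but it does not change the argument.
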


\begin{proof}

In order to use Proposition \ref{P4.1}, we observe
\begin{equation} \label{D-16}
|1-\langle \xi_i^0,\xi_j^0\rangle | = |1-\langle \psi_i^0,\psi_j^0\rangle + \langle \psi_i^0,\psi_j^0\rangle (1-e^{\mi(\theta_j^0-\theta_i^0)})| \leq \mathcal D(\Psi^0) + 2\sin\frac{\mathcal D( \Theta^0)}{2} .
\end{equation}
Since initial diameter $\mathcal D(\Psi^0)$ satisfies \eqref{D-15}, initial conditions for $\mathcal D(\mathcal H^0)$ and $\mathcal D(\Theta^0)$ in \eqref{D-11} hold. Thus, we use Proposition \ref{P4.1} and Proposition \ref{P2.1} to conclude that there exist $\xi^\infty \in \bbh\bbs^{d-1}$ and $\theta_j^\infty$ such that
\[
\lim_{t\to\infty} \psi_j(t)  = \xi^\infty e^{\mi\theta_j^\infty}=:\psi_j^\infty
\]
which shows that $\langle \psi_i^\infty,\psi_j^\infty\rangle$ has unit modulus:
\[
\langle\psi_i^\infty,\psi_j^\infty \rangle = \langle \xi^\infty e^{\mi\theta_i^\infty},\xi^\infty e^{\mi\theta_j^\infty}\rangle = e^{\mi(\theta_i^\infty-\theta_j^\infty)}.
\]

\end{proof}

\begin{remark} \label{R4.1} 
We here mention that the Kuramoto model \eqref{D-2} with doubled coupling strength was  formally derived from \eqref{D-1} by using the ansatz:
\[
\psi_j(t) = e^{\mi\theta_j(t)}.
\]
Similar to Remark \ref{R3.2}, all information for the natural frequency part of \eqref{D-1} have been encoded to the Kuramoto model. Thus, we would say that the asymptotic behavior of  high-dimensional quantity $\psi_j$ can be determined by the one of one-dimensional quantity $\theta_j$. 
\end{remark} 

\begin{remark}
In fact, the authors of \cite{H-H-K21} have shown that $\langle \psi_i,\psi_j\rangle$ converges to a definite value. However, there is no further information on the convergent values. In addition, the existence of a limit for $\psi_j$ was not provided. By applying the dimension reduction method, we are here able to show that such convergent values have unit modulus.

\end{remark}

\subsection{Lohe Hermitian sphere model}
In this subsection, we turn to the Lohe Hermitian sphere model on  $\bbh\bbs^{d-1}$ which reads as
\begin{equation} \label{D-50}
\dot z_j = \mi a_j z_j + \frac{\kp_0}{ N }\sum_{k=1}^N (z_k - \langle z_j,z_k\rangle z_j) + \frac{\kp_1}{N} \sum_{k=1}^N ( \langle z_k,z_j\rangle - \langle z_j,z_k\rangle ) z_j,\quad t>0,\quad j \in [N].
\end{equation} 
As we have done, we couple the Kuramoto model whose natural frequencies are $\{a_j\}$ with zero average: 
\begin{equation} \label{D-51}
\dot \theta_j = a_j + \frac{2(\kp_0+\kp_1)}{N} \sum_{k=1}^N \sin (\theta_k-\theta_j),
\end{equation}
and define an auxiliary variable
\[
\xi_j (t) := z_j(t) e^{-\mi\theta_j(t)}.
\]

\begin{lemma}
Let $z_j$ and $\theta_j$ be solutions to \eqref{D-50} and \eqref{D-51}, respectively. Then, $\xi_j = z_j e^{-\mi\theta_j}$ satisfies 
\begin{align} \label{D-53}
\begin{aligned}
\dot \xi_j &=\frac{\kp_0}{ N} \sum_{k=1}^N \Big[ e^{\mi(\theta_k - \theta_j)} (\xi_k -\xi_j)- \langle     \xi_j,e^{\mi(\theta_k-\theta_j)}(\xi_k -\xi_j)\rangle \xi_j \Big]  \\
& \hspace{0.5cm}+\frac{\kp_1}{N} \sum_{k=1}^N \Big[ \langle e^{\mi(\theta_k - \theta_j)}(\xi_k - \xi_j), \xi_j\rangle - \langle \xi_j, e^{\mi(\theta_k - \theta_j)}(\xi_k - \xi_j) \rangle \Big] \xi_j .
\end{aligned}
\end{align} 
In addition, temporal evolution of $h_{ij}:=\langle \xi_i,\xi_j\rangle$ is given as
\begin{align} \label{D-54}
\begin{aligned}
\dot h_{ij} &=\frac{\kp_0}{ N}\sum_{k=1}^N  \Big[ e^{\mi (\theta_k - \theta_i)} (h_{kj} - h_{ij}) + e^{\mi (\theta_i - \theta_k)} (1-h_{ik})h_{ij} \\
&\hspace{2cm} + e^{\mi (\theta_j - \theta_k)} (h_{ik} - h_{ij}) + e^{\mi (\theta_k - \theta_j)} (1-h_{kj})h_{ij} \Big] \\
& \hspace{0.5cm}+ \frac{\kp_1}{N} \sum_{k=1}^N \Big[ e^{\mi(\theta_k- \theta_i)}(h_{ki}-1)h_{ij} - e^{\mi(\theta_i- \theta_k)}(h_{ik}-1)h_{ij} \\
&\hspace{2.5cm} + e^{\mi(\theta_j - \theta_k)}(h_{jk}-1)h_{ij} - e^{\mi(\theta_k - \theta_j)}(h_{kj}-1)h_{ij}\Big].
\end{aligned}
\end{align}
\end{lemma}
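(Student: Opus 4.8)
The plan is to differentiate the defining relation $\xi_j = z_j e^{-\mi\theta_j}$ and substitute the governing equations \eqref{D-50} and \eqref{D-51}. By the product rule,
\[
\dot\xi_j = e^{-\mi\theta_j}\dot z_j - \mi\dot\theta_j\,\xi_j,
\]
so, inserting \eqref{D-50} for $\dot z_j$ and \eqref{D-51} for $\dot\theta_j$ and writing every $z_k = e^{\mi\theta_k}\xi_k$, one uses the elementary phase identities
\[
e^{-\mi\theta_j}z_k = e^{\mi(\theta_k-\theta_j)}\xi_k,\qquad \langle z_j,z_k\rangle = e^{\mi(\theta_j-\theta_k)}\langle\xi_j,\xi_k\rangle,\qquad \langle z_k,z_j\rangle = e^{\mi(\theta_k-\theta_j)}\langle\xi_k,\xi_j\rangle,
\]
which follow from sesquilinearity of $\langle\cdot,\cdot\rangle$. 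Since \eqref{D-50} preserves $\bbh\bbs^{d-1}$, we have $|\xi_j|\equiv 1$ throughout; this is used repeatedly below and can either be checked directly as in Lemma \ref{L3.1} or quoted from the fact that \eqref{D-50} is the Lohe Hermitian sphere model on $\bbh\bbs^{d-1}$.

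The next step is to carry out the cancellations. The term $\mi a_j z_j e^{-\mi\theta_j} = \mi a_j\xi_j$ from \eqref{D-50} is cancelled exactly by the $-\mi a_j\xi_j$ piece of $-\mi\dot\theta_j\xi_j$, leaving the residual $-\mi\tfrac{2(\kp_0+\kp_1)}{N}\sum_k\sin(\theta_k-\theta_j)\,\xi_j$. Writing $\mi\sin\varphi = \tfrac12\big(e^{\mi\varphi}-e^{-\mi\varphi}\big)$, this splits into a $\kp_0$-part and a $\kp_1$-part, each of the form $-\tfrac{\kp_\ell}{N}\sum_k\big(e^{\mi(\theta_k-\theta_j)}-e^{\mi(\theta_j-\theta_k)}\big)\xi_j$. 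The crux is that these residuals are precisely absorbed by the nonlinear terms of \eqref{D-50}: the difference between $e^{-\mi\theta_j}\tfrac{\kp_0}{N}\sum_k z_k = \tfrac{\kp_0}{N}\sum_k e^{\mi(\theta_k-\theta_j)}\xi_k$ and the desired $\tfrac{\kp_0}{N}\sum_k e^{\mi(\theta_k-\theta_j)}(\xi_k-\xi_j)$ is exactly one half of the $\kp_0$-residual, while, using $|\xi_j|=1$,
\[
\langle\xi_j,\, e^{\mi(\theta_k-\theta_j)}(\xi_k-\xi_j)\rangle = e^{\mi(\theta_j-\theta_k)}\big(\langle\xi_j,\xi_k\rangle-1\big),
\]
so that $-\tfrac{\kp_0}{N}\sum_k\langle\xi_j, e^{\mi(\theta_k-\theta_j)}(\xi_k-\xi_j)\rangle\xi_j$ equals $-\tfrac{\kp_0}{N}\sum_k e^{\mi(\theta_j-\theta_k)}\langle\xi_j,\xi_k\rangle\xi_j$ (matching $e^{-\mi\theta_j}$ times $-\langle z_j,z_k\rangle z_j$) plus the remaining half of the $\kp_0$-residual. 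An identical bookkeeping for the $\kp_1$ terms, now using $\langle e^{\mi(\theta_k-\theta_j)}(\xi_k-\xi_j),\xi_j\rangle = e^{\mi(\theta_k-\theta_j)}(\langle\xi_k,\xi_j\rangle-1)$, matches $e^{-\mi\theta_j}$ times $\tfrac{\kp_1}{N}\sum_k(\langle z_k,z_j\rangle-\langle z_j,z_k\rangle)z_j$ against the $\kp_1$-residual; assembling the two pieces yields \eqref{D-53}. (Note the $\kp_0$-part of \eqref{D-53} is the Schr\"odinger--Lohe equation \eqref{D-4} with $\kp\mapsto\kp_0$, and the $\kp_1$-part is a scalar multiple of $\xi_j$.)

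For the second identity \eqref{D-54}, I would differentiate $h_{ij}=\langle\xi_i,\xi_j\rangle$ via $\dot h_{ij}=\langle\dot\xi_i,\xi_j\rangle+\langle\xi_i,\dot\xi_j\rangle$, substitute \eqref{D-53} for both $\dot\xi_i$ and $\dot\xi_j$, and expand each inner product using sesquilinearity, $h_{ij}=\langle\xi_i,\xi_j\rangle$, $\overline{h_{ij}}=h_{ji}$ and $|\xi_i|=|\xi_j|=1$. Grouping the terms according to which exponential factor ($e^{\mi(\theta_k-\theta_i)}$, $e^{\mi(\theta_i-\theta_k)}$, $e^{\mi(\theta_j-\theta_k)}$, $e^{\mi(\theta_k-\theta_j)}$) they carry produces the eight-term right-hand side of \eqref{D-54}: the first four terms arise from the $\kp_0$-contraction part exactly as in \eqref{D-5} (with $\kp\mapsto\kp_0$), and the last four from the $\kp_1$-part, where one uses that the scalar $\langle e^{\mi(\theta_k-\theta_j)}(\xi_k-\xi_j),\xi_j\rangle - \langle\xi_j,e^{\mi(\theta_k-\theta_j)}(\xi_k-\xi_j)\rangle = e^{\mi(\theta_k-\theta_j)}(h_{kj}-1) - e^{\mi(\theta_j-\theta_k)}(h_{jk}-1)$ multiplies $\xi_j$ in \eqref{D-53}. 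The main obstacle is not conceptual but organizational: tracking the exact phase $e^{\pm\mi(\theta_k-\theta_\ast)}$ on each of the many terms and correctly splitting the doubled Kuramoto coupling $2(\kp_0+\kp_1)$ in \eqref{D-51} into its $\kp_0$ and $\kp_1$ halves; the one genuine input beyond algebra is the conservation of $|\xi_j|=1$, which converts the bare phases left over from the sine terms into the $(\,\cdot\,-1)$ combinations appearing in \eqref{D-53}--\eqref{D-54}.
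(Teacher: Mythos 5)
Your computation is correct and is exactly the "straightforward calculation" the paper invokes without detail: differentiating $\xi_j=z_je^{-\mi\theta_j}$, cancelling $\mi a_j\xi_j$, splitting $\mi\sin(\theta_k-\theta_j)=\tfrac12(e^{\mi(\theta_k-\theta_j)}-e^{\mi(\theta_j-\theta_k)})$ against the nonlinear terms with $|\xi_j|=1$, and then expanding $\dot h_{ij}=\langle\dot\xi_i,\xi_j\rangle+\langle\xi_i,\dot\xi_j\rangle$ by sesquilinearity. All the phase bookkeeping checks out against \eqref{D-53}--\eqref{D-54}.
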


\begin{proof}
The proof directly follows from straightforward calculation. 
\end{proof}

We denote the maximal diameter 
\[
\mathcal D(\mathcal H(t)) :=\max_{1\leq i,j\leq N} |1-h_{ij}(t)|,\quad h_{ij} = \langle \xi_i,\xi_j\rangle(t),\quad t>0. 
\]
As for the Schr\"odinger--Lohe model, we consider the dynamics of the maximal quantities among $|1-h_{ij}(t)|$ instead of, for instance, relative distance $|\xi_i-\xi_j|^2$. Below, we find a differential inequality for $\mathcal D(\mathcal H)$. 

\begin{lemma}
Let $\{\xi_j\}$ be a solution to \eqref{D-53}. Then, the maximal diameter $\mathcal D( \mathcal H)$ satisfies
\begin{equation} \label{D-56}
\frac\d\dt \mathcal D(\mathcal H) \leq -2\left(   \kp_0-2\kp_1  - (6\kp_0 +4\kp_1 )\sin \frac{\mathcal D(\Theta)}{2}      \right) \mathcal D(\mathcal H) + 2(\kp_0 + 2\kp_1) \mathcal D(\mathcal H)^2 .
\end{equation} 
\end{lemma}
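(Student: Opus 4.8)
The plan is to mirror the proof of the analogous Schr\"odinger--Lohe estimate \eqref{D-6}, but now keeping track of the two coupling constants $\kp_0$ and $\kp_1$ separately. First I would start from the evolution equation \eqref{D-54} for $h_{ij} = \langle \xi_i, \xi_j\rangle$ and rewrite it in terms of the deviations $1-h_{ij}$. For the $\kp_0$-block this is exactly the manipulation already carried out in \eqref{D-6-1}: one adds and subtracts the appropriate terms so that the "leading" part produces $-2\kp_0(1-h_{ij})$, a quadratic term $\frac{\kp_0}{N}\sum_k (1-h_{ik}+1-h_{kj})(1-h_{ij})$, and a remainder consisting of factors $(e^{\mi(\theta_k-\theta_i)}-1)$, etc., multiplied by differences of the $h$'s. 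For the $\kp_1$-block, I would similarly replace each $h_{ki}$, $h_{ik}$, $h_{jk}$, $h_{kj}$ by $1-(1-h_{ki})$, and use $h_{ij}=1-(1-h_{ij})$, collecting the terms; the key observation here is that the four exponential factors in the $\kp_1$-sum come in pairs $e^{\mi(\theta_k-\theta_i)}$ and $-e^{\mi(\theta_i-\theta_k)}$ (and likewise for $j$), so that when the phases are set to zero the whole $\kp_1$-contribution to the linear-in-$(1-h)$ part vanishes, leaving only a phase-error contribution plus quadratic terms.

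Next I would bound the remainder terms. Using the elementary inequality \eqref{C-18}, namely $|e^{\mi\varphi}-1|\le 2|\sin(\varphi/2)|$, together with $|\theta_k-\theta_i|\le \mathcal D(\Theta)$ and $|1-h_{ij}|\le \mathcal D(\mathcal H)$, each exponential-error term in the $\kp_0$-block is controlled by $2\sin\frac{\mathcal D(\Theta)}{2}\mathcal D(\mathcal H)$, and there are effectively six such contributions (two from $h_{kj}-h_{ij}$, two from $h_{ik}-h_{ij}$, one each from the $(1-h_{ik})h_{ij}$ and $(1-h_{kj})h_{ij}$ terms after further splitting), giving the $6\kp_0\sin\frac{\mathcal D(\Theta)}{2}\mathcal D(\mathcal H)$ term. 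For the $\kp_1$-block, the phase-error contributions, bounded in the same way, produce a $4\kp_1\sin\frac{\mathcal D(\Theta)}{2}\mathcal D(\mathcal H)$ term, while the genuinely $\kp_1$-linear terms (coming from the fact that the $\kp_1$-sum multiplies $h_{ij}$, not $\xi_i-\xi_j$) supply $-(-2\kp_1)\mathcal D(\mathcal H) = +2\kp_1\mathcal D(\mathcal H)$ on the "bad" side: after the identity $e^{\mi(\theta_k-\theta_i)}(h_{ki}-1) - e^{\mi(\theta_i-\theta_k)}(h_{ik}-1)$ is written out with $h_{ki},h_{ik}$ near $1$, one sees a term of order $\mathcal D(\mathcal H)$ that cannot be given a favorable sign, which is precisely why $-2\kp_0$ gets replaced by $-2(\kp_0-2\kp_1)$. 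The quadratic terms from both blocks aggregate to $2(\kp_0+2\kp_1)\mathcal D(\mathcal H)^2$.

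Finally I would invoke the standard argument that $\mathcal D(\mathcal H)(t)=\max_{i,j}|1-h_{ij}(t)|$ is Lipschitz in $t$ and that at any time of differentiability its derivative is bounded by the right-hand side of the differential inequality satisfied by the particular index pair $(i,j)$ realizing the maximum (this is the same device used to pass from \eqref{D-6-1} to \eqref{D-6}, and from \eqref{C-20} to \eqref{C-13}). Assembling the linear and quadratic bounds above yields
\[
\frac\d\dt \mathcal D(\mathcal H) \leq -2\Big(\kp_0-2\kp_1-(6\kp_0+4\kp_1)\sin\tfrac{\mathcal D(\Theta)}{2}\Big)\mathcal D(\mathcal H) + 2(\kp_0+2\kp_1)\mathcal D(\mathcal H)^2,
\]
which is \eqref{D-56}. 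The main obstacle I anticipate is the bookkeeping in the $\kp_1$-block: one must carefully separate the phase-error part (which scales with $\sin\frac{\mathcal D(\Theta)}{2}$) from the part that survives at zero phase difference, and correctly identify that the latter contributes with the "wrong" sign and coefficient $2\kp_1$, rather than being absorbable into the dissipative $-2\kp_0$ term; getting the constant $2$ (and not, say, $1$ or $4$) right requires tracking all four terms in the $\kp_1$-sum and their conjugate pairings simultaneously. The $\kp_0$-block is essentially identical to the Schr\"odinger--Lohe computation already done and should go through verbatim.
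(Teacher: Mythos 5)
Your proposal follows the paper's proof essentially verbatim: rewrite \eqref{D-54} in terms of $1-h_{ij}$, isolate the dissipative $-2\kp_0(1-h_{ij})$ part, the quadratic parts, the genuinely linear $\kp_1$-part, and the phase-error remainders, bound the latter via \eqref{C-18}, and pass to the maximum; the final inequality you assemble is exactly \eqref{D-56}. One small internal inconsistency worth fixing: your claim that ``when the phases are set to zero the whole $\kp_1$-contribution to the linear-in-$(1-h)$ part vanishes'' is not correct, since $h_{ki}-h_{ik}=-2\mi\,\mathrm{Im}\,h_{ik}$ is generally nonzero; it is merely of order $\mathcal D(\mathcal H)$ (each of the two conjugate pairs contributes up to $2\mathcal D(\mathcal H)$, hence $4\kp_1\mathcal D(\mathcal H)$ in total, i.e.\ the $-2\kp_1$ inside the factored parenthesis), which is precisely the destabilizing term you then correctly account for in your second paragraph, though there you quote its size as $2\kp_1\mathcal D(\mathcal H)$ rather than $4\kp_1\mathcal D(\mathcal H)$.
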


\begin{proof}
We use \eqref{D-54} to find 
\begin{align*}
\frac\d\dt (1-h_{ij}) & = -2\kp_0 (1-h_{ij}) + \frac{\kp_0}{N} \sum_{k=1}^N ( 1-h_{ik} + 1-h_{kj})(1-h_{ij}) \\
&\hspace{0.5cm}+\frac{\kp_1}{N} \sum_{k=1}^N ( 1-h_{ik} + 1-h_{kj} -(1-h_{ki})-(1-h_{jk}))(1-h_{ij}) \\
&\hspace{0.5cm} + \frac{\kp_1}{N} \sum_{k=1}^N ( 1-h_{ik} + 1-h_{kj} -(1-h_{ki})-(1-h_{jk})) \\
&\hspace{0.5cm}- \frac{\kp_0}{ N}\sum_{k=1}^N  \Big[ (e^{\mi (\theta_k - \theta_i)} -1)(h_{kj} - h_{ij}) + (e^{\mi (\theta_i - \theta_k)} -1)(1-h_{ik})h_{ij} \\
&\hspace{2.5cm} +( e^{\mi (\theta_j - \theta_k)}-1)(h_{ik} - h_{ij}) + (e^{\mi (\theta_k - \theta_j)} -1)(1-h_{kj})h_{ij} \Big] \\
& \hspace{0.5cm}- \frac{\kp_1}{N} \sum_{k=1}^N \Big[( e^{\mi(\theta_k- \theta_i)}-1)(h_{ki}-1)h_{ij} - (e^{\mi(\theta_i- \theta_k)}-1)(h_{ik}-1)h_{ij} \\
&\hspace{2.5cm} + (e^{\mi(\theta_j - \theta_k)}-1) (h_{jk}-1)h_{ij} -( e^{\mi(\theta_k - \theta_j)}-1)(h_{kj}-1)h_{ij}\Big].
\end{align*}
Then, we derive the desired inequality: 
\begin{align*}
\frac\d\dt \mathcal D(\mathcal H) &\leq  -2\left(   \kp_0-2\kp_1  - (6\kp_0 +4\kp_1 )\sin \frac{\mathcal D(\Theta)}{2}      \right) \mathcal D(\mathcal H) + 2(\kp_0 + 2\kp_1) \mathcal D(\mathcal H)^2 .
\end{align*}
\end{proof}

We are now ready to provide a sufficient framework under which the maximal diameter  $\mathcal D(\mathcal H)$ decays to zero. 

\begin{proposition} \label{P4.2}
Suppose that the coupling strength (sufficiently large in the sense below) and natural frequencies satisfy 
\begin{equation*} \label{D-58}
\kp_0>2\kp_1,\quad \frac{2(\kp_0-2\kp_1)(\kp_0+\kp_1)}{3\kp_0+2\kp_1}>\mathcal D(a),\quad \sum_{j=1}^N a_j=0,
\end{equation*}
and that there exists a positive constant $\delta \in (0, \frac{2\kp_0-4\kp_1}{6\kp_0+4\kp_1})$ such that
\begin{equation} \label{D-59}
 \mathcal D(\Theta^0)< \delta <\frac{\mathcal D(\Omega)}{2(\kp_0+\kp_1)}<\frac{2\kp_0-4\kp_1}{6\kp_0+4\kp_1}           \quad \textup{and}\quad \mathcal D(\mathcal H^0)< \frac{\kp_0-2\kp_1 - (3\kp_0+2\kp_1)\delta}{\kp_0 + 2\kp_1}.
\end{equation}
Let $\{\xi_j\}$ and $\{\theta_j\}$ be solutions to \eqref{D-53} and \eqref{D-51}, respectively. Then, the maximal diameter converges to zero with an exponential rate:
\[
\lim_{t\to\infty} \mathcal D(\mathcal H(t)) =0.
\]
Moreover, there exists $\xi^\infty \in \bbh\bbs^{d-1}$ such that
\[
\lim_{t\to\infty} \xi_j(t) = \xi^\infty,\quad j\in[N]. 
\]
\end{proposition}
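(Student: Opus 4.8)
The statement is the natural analogue of Proposition \ref{P3.1} and Proposition \ref{P4.1}, so the plan is to mirror those arguments, with the only new feature being the bookkeeping of the two coupling constants $\kp_0$ and $\kp_1$. First I would invoke Proposition \ref{P2.1}(i) for the Kuramoto system \eqref{D-51}: since the natural frequencies have zero average and $\eqref{D-59}_1$ guarantees $\mathcal D(\Theta^0)<\delta<\frac{\mathcal D(a)}{2(\kp_0+\kp_1)}<\pi$, the Kuramoto diameter obeys $\mathcal D(\Theta(t))<\delta$ for all $t>0$. Feeding this bound into the differential inequality \eqref{D-56} and using $\sin\frac{\mathcal D(\Theta)}{2}<\frac{\mathcal D(\Theta)}{2}<\frac\delta2$, the coefficient of the linear term becomes $-2(\kp_0-2\kp_1-(3\kp_0+2\kp_1)\delta)$, which is strictly negative precisely because of the upper bound on $\delta$ in $\eqref{D-59}_1$. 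Hence one gets a clean Riccati-type inequality
\[
\frac\d\dt \mathcal D(\mathcal H) \leq -2\big(\kp_0-2\kp_1-(3\kp_0+2\kp_1)\delta\big)\mathcal D(\mathcal H) + 2(\kp_0+2\kp_1)\mathcal D(\mathcal H)^2,\quad t>0.
\]

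\textbf{Exponential decay of $\mathcal D(\mathcal H)$.} The second step is the standard comparison argument for this scalar ODE inequality. Writing $\lambda:=2(\kp_0-2\kp_1-(3\kp_0+2\kp_1)\delta)>0$ and $\mu:=2(\kp_0+2\kp_1)$, the function $y(t)$ solving $\dot y=-\lambda y+\mu y^2$ with $y(0)=\mathcal D(\mathcal H^0)$ stays below its initial value and decays exponentially as long as $y(0)<\lambda/\mu$; this threshold is exactly $\frac{\kp_0-2\kp_1-(3\kp_0+2\kp_1)\delta}{\kp_0+2\kp_1}$, which is the hypothesis $\eqref{D-59}_2$ on $\mathcal D(\mathcal H^0)$. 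A Gr\"onwall/comparison argument then gives $\mathcal D(\mathcal H(t))\leq \mathcal D(\mathcal H^0)e^{-\lambda t}/(1-\tfrac\mu\lambda\mathcal D(\mathcal H^0))\cdot(\text{correction})$, and in any case $\mathcal D(\mathcal H(t))\to 0$ exponentially. In particular all the relative inner products $h_{ij}(t)=\langle\xi_i,\xi_j\rangle(t)\to 1$.

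\textbf{Convergence of each $\xi_j$.} The final step is to promote convergence of relative quantities to convergence of each $\xi_j$, exactly as in Proposition \ref{P4.1}. From the governing equation \eqref{D-53}, and using $|\xi_k-\xi_j|^2=2(1-\mathrm{Re}\,h_{kj})\leq 2|1-h_{kj}|\leq 2\mathcal D(\mathcal H)$ together with $|e^{\mi(\theta_k-\theta_j)}|=1$ and $|\xi_j|=1$, one bounds $|\dot\xi_j|\leq C(\kp_0,\kp_1)\sqrt{\mathcal D(\mathcal H)}$ for a constant $C$; since $\mathcal D(\mathcal H)$ decays exponentially, $\sqrt{\mathcal D(\mathcal H)}$ also decays exponentially and hence $\dot\xi_j\in L^1(\bbr_+)$. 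Therefore $\xi_j(t)\to\xi_j^\infty:=\xi_j^0+\int_0^\infty\dot\xi_j(s)\,\d s$ for each $j$, and the limit lies on $\bbh\bbs^{d-1}$ since the sphere is closed and (as in Lemma \ref{L3.1}-type computations) the unit modulus is preserved by \eqref{D-53}. Finally, the triangle inequality $|\xi_i^\infty-\xi_j^\infty|\leq |\xi_i^\infty-\xi_i(t)|+|\xi_i(t)-\xi_j(t)|+|\xi_j(t)-\xi_j^\infty|$ together with $|\xi_i(t)-\xi_j(t)|\to 0$ forces all the $\xi_j^\infty$ to coincide, giving the common limit $\xi^\infty$.

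\textbf{Main obstacle.} There is no serious analytical obstacle beyond Proposition \ref{P3.1}: the one place requiring care is verifying that the algebraic constraints on $\kp_0,\kp_1,\delta$ in the hypotheses are exactly what is needed to make the linear coefficient $\lambda$ positive and to keep $\mathcal D(\mathcal H^0)$ below the Riccati threshold $\lambda/\mu$; tracking the two coupling constants through inequality \eqref{D-56} is the only bookkeeping step where a sign error could creep in. Everything else is a verbatim repetition of the scalar-comparison and $L^1$-derivative arguments already used for the swarm sphere and Schr\"odinger--Lohe models.
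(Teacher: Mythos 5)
Your proposal is correct and follows essentially the same route as the paper's own proof: bound $\sin\frac{\mathcal D(\Theta)}{2}$ by $\frac\delta2$ using the Kuramoto diameter estimate, reduce \eqref{D-56} to the Riccati inequality with linear coefficient $-2(\kp_0-2\kp_1-(3\kp_0+2\kp_1)\delta)$ and threshold $\frac{\kp_0-2\kp_1-(3\kp_0+2\kp_1)\delta}{\kp_0+2\kp_1}$, then use $|\dot\xi_j|\lesssim\sqrt{\mathcal D(\mathcal H)}\in L^1(\bbr_+)$ and the triangle inequality to get a common limit. The only additional content you supply (the explicit comparison-ODE solution and the verification that the limit stays on the sphere) is consistent with, and slightly more detailed than, what the paper writes.
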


\begin{proof}
Since initial data satisfy $\eqref{D-59}_1$, inequality \eqref{D-56} becomes
\[
\frac\d\dt \mathcal D(\mathcal H) \leq -2(\kp_0-2\kp_1 - (3\kp_0+2\kp_1) \delta)  \mathcal D(\mathcal H) + 2(\kp_0+2\kp_1) \mathcal D(\mathcal H)^2  ,\quad t>0.
\]
Then, initial condition $\eqref{D-59}_2$ yields the desired exponential decay of $\mathcal D( \mathcal H)$. For the second assertion, we notice from \eqref{D-53}  that 
\[
|\dot\xi_j| \leq 2\sqrt2(\kp_0+\kp_1)  \sqrt{\mathcal D( \mathcal H)},
\]
which yields $|\dot \xi_j| \in L^1(\bbr_+)$. Thus, 
\[
\lim_{t\to\infty} \xi_j(t) = \xi_j^0 + \int_0^\infty \dot \xi_j (s) \d s =:\xi_j^\infty,
\]
and it directly follows from the zero convergence of $\mathcal D(\mathcal H)$ that all $\{\xi_j^\infty\}$ should be same. Hence, existence of $\xi^\infty$ is shown. 
\end{proof}

By virtue of Proposition \ref{P4.2}, we see that all $\{\xi_i\}$ collapse to a common stationary point $\xi^\infty$ under a well-prepared initial framework. We are now concerned with the asymptotic behavior of \eqref{D-50}, which is our main interest in this subsection. For this, we denote
\[
\mathcal D(\mathcal Z(t)):= \max_{1\leq i,j\leq N} |1-\langle z_i,z_j\rangle(t)|,\quad t>0.
\]

\begin{theorem}
Suppose that the coupling strength natural frequencies satisfy 
\[
\kp_0>2\kp_1,\quad \frac{2(\kp_0-2\kp_1)(\kp_0+\kp_1)}{3\kp_0+2\kp_1}>\mathcal D(a),\quad \sum_{i=1}^N a_i=0,
\]
and that there exists $\delta \in (0,\beta)$ such that
\begin{equation} \label{D-63}
0<  \delta <\frac{\mathcal D(\Omega)}{2(\kp_0+\kp_1)}<\beta ,\quad \mathcal D(\mathcal Z^0)< \frac{\kp_0-2\kp_1 - (3\kp_0+2\kp_1)\delta}{\kp_0 + 2\kp_1}-2\sin\frac\delta2,
\end{equation} 
where $\beta=\beta(\rho)$ is a unique positive root for the algebraic equation:
\[
\frac{\rho-2 -(3\rho+2)s}{\rho+2} = 2\sin\frac s2,\quad \rho: = \frac{\kp_0}{\kp_1}>2
\]
Let $\{z_j\}$ be a solution to \eqref{D-50}. Then, there exists $z_j^\infty \in \bbh\bbs^{d-1}$ such that
\[
\lim_{t\to\infty} z_j(t) = z_j^\infty,\quad j\in[N].
\]
Moreover, such $\{z_j^\infty\}$ satisfy 
\[
|\langle z_i^\infty,z_j^\infty\rangle|=1,\quad i,j\in [N].
\]
\end{theorem}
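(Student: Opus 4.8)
The plan is to mimic exactly the argument used for the Schr\"odinger--Lohe model, replacing the relative distance estimates by the ones already established for the Lohe Hermitian sphere model in Proposition \ref{P4.2}. First I would relate the initial data of the auxiliary variables $\xi_j^0 = z_j^0 e^{-\mi\theta_j^0}$ to those of $z_j^0$. Writing $h_{ij}^0 = \langle \xi_i^0,\xi_j^0\rangle$ and using $\xi_j^0 = z_j^0 e^{-\mi\theta_j^0}$, one has
\[
|1-\langle \xi_i^0,\xi_j^0\rangle| = \left|1-\langle z_i^0,z_j^0\rangle + \langle z_i^0,z_j^0\rangle\left(1-e^{\mi(\theta_j^0-\theta_i^0)}\right)\right| \leq \mathcal D(\mathcal Z^0) + 2\sin\frac{\mathcal D(\Theta^0)}{2},
\]
exactly as in \eqref{D-16}, using $|1-e^{\mi\varphi}|\leq 2|\sin(\varphi/2)|$ from \eqref{C-18} and the fact that $|\langle z_i^0,z_j^0\rangle|\leq 1$ on $\bbh\bbs^{d-1}$.

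Next I would check that the smallness hypotheses \eqref{D-63} transfer into the hypotheses \eqref{D-59} of Proposition \ref{P4.2}. Since $\mathcal D(\Theta^0)<\delta$ by the defining inequality on $\delta$ in \eqref{D-63}, and since by the choice of $\beta=\beta(\rho)$ as the positive root of $\frac{\rho-2-(3\rho+2)s}{\rho+2} = 2\sin\frac s2$ with $\rho=\kp_0/\kp_1$ one has
\[
\mathcal D(\mathcal Z^0) + 2\sin\frac\delta2 < \frac{\kp_0-2\kp_1 - (3\kp_0+2\kp_1)\delta}{\kp_0+2\kp_1} = \frac{\rho-2-(3\rho+2)\delta}{\rho+2}\cdot\frac{\kp_1}{\kp_1},
\]
the bound above yields $\mathcal D(\mathcal H^0) < \frac{\kp_0-2\kp_1-(3\kp_0+2\kp_1)\delta}{\kp_0+2\kp_1}$, which is precisely $\eqref{D-59}_2$. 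The coupling conditions $\kp_0>2\kp_1$, $\frac{2(\kp_0-2\kp_1)(\kp_0+\kp_1)}{3\kp_0+2\kp_1}>\mathcal D(a)$ and $\sum a_i=0$ are assumed verbatim, and $\eqref{D-59}_1$ holds since $\delta<\beta$ forces $\delta$ into the required interval (one should double-check that $\beta \leq \frac{2\kp_0-4\kp_1}{6\kp_0+4\kp_1}$, which follows because $2\sin\frac s2 > 0$ forces the root $s=\beta$ to lie strictly below the value where the left side vanishes, namely $\frac{\rho-2}{3\rho+2}=\frac{2\kp_0-4\kp_1}{6\kp_0+4\kp_1}$). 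Hence Proposition \ref{P4.2} applies and furnishes $\xi^\infty\in\bbh\bbs^{d-1}$ with $\xi_j(t)\to\xi^\infty$ for all $j$, while Proposition \ref{P2.1}(ii) — applicable since the Kuramoto model \eqref{D-51} has coupling $2(\kp_0+\kp_1)$ and $\delta<\frac{\mathcal D(a)}{2(\kp_0+\kp_1)}<\beta<\pi$, placing us in the phase-locking regime — furnishes $\theta_j^\infty$ with $\theta_j(t)\to\theta_j^\infty$.

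Finally, since $z_j(t) = \xi_j(t)e^{\mi\theta_j(t)}$, combining the two limits gives
\[
\lim_{t\to\infty} z_j(t) = \xi^\infty e^{\mi\theta_j^\infty} =: z_j^\infty,
\]
and then
\[
\langle z_i^\infty, z_j^\infty\rangle = \langle \xi^\infty e^{\mi\theta_i^\infty}, \xi^\infty e^{\mi\theta_j^\infty}\rangle = e^{\mi(\theta_i^\infty-\theta_j^\infty)}|\xi^\infty|^2 = e^{\mi(\theta_i^\infty-\theta_j^\infty)},
\]
which has unit modulus, completing the proof. The only genuinely nonroutine point is verifying the chain of inequalities linking $\delta$, $\beta(\rho)$ and the admissible windows in \eqref{D-59} — in particular confirming that the root $\beta$ of the transcendental equation indeed sits below $\frac{2\kp_0-4\kp_1}{6\kp_0+4\kp_1}$ so that $\eqref{D-59}_1$ is not vacuous; everything else is a verbatim transcription of the Schr\"odinger--Lohe argument with $(\kp,\tfrac32\mathcal D(a))$ replaced by the Lohe Hermitian parameters.
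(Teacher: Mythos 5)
Your proposal is correct and follows essentially the same route as the paper: bound $|1-\langle\xi_i^0,\xi_j^0\rangle|$ by $\mathcal D(\mathcal Z^0)+2\sin\frac{\mathcal D(\Theta^0)}{2}$, verify that \eqref{D-63} implies the hypotheses \eqref{D-59} of Proposition \ref{P4.2}, and combine the limits $\xi_j\to\xi^\infty$ and $\theta_j\to\theta_j^\infty$ to get $z_j^\infty=\xi^\infty e^{\mi\theta_j^\infty}$ with $|\langle z_i^\infty,z_j^\infty\rangle|=1$. Your explicit check that the root $\beta(\rho)$ lies below $\frac{\rho-2}{3\rho+2}=\frac{2\kp_0-4\kp_1}{6\kp_0+4\kp_1}$, so that $\eqref{D-59}_1$ is nonvacuous, is a detail the paper leaves implicit and is a worthwhile addition.
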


\begin{proof}
In order to apply Proposition \ref{P4.2}, we recall \eqref{D-16}:
\begin{equation*}
|1-\langle \xi_i^0,\xi_j^0\rangle | = |1-\langle \psi_i^0,\psi_j^0\rangle + \langle \psi_i^0,\psi_j^0\rangle (1-e^{\mi(\theta_j^0-\theta_i^0)})| \leq \mathcal D(\mathcal Z^0) + 2\sin\frac{\mathcal D( \Theta^0)}{2} .
\end{equation*}
Since initial data $\mathcal D(\mathcal Z^0)$ satisfy \eqref{D-63}, initial conditions for $\mathcal D(\mathcal H^0)$ and $\mathcal D(\Theta^0)$ in \eqref{D-59} hold. Hence, it follows from Proposition \ref{P4.2} and Proposition \ref{P2.1} that  there exist $\xi^\infty \in \bbh\bbs^{d-1}$ and $e^{\mi\theta_j^\infty}$ such that
\[
\lim_{t\to\infty} z_j(t) = \xi^\infty e^{\mi\theta_j^\infty} =: z_j^\infty,
\]
which directly yields that $\langle z_i^\infty,z_j^\infty \rangle$ has unit modulus:
\[
\langle z_i^\infty,z_j^\infty \rangle = \langle \xi^\infty e^{\mi\theta_i^\infty}, \xi^\infty e^{\mi\theta_j^\infty}\rangle = e^{\mi(\theta_i^\infty- \theta_j^\infty)}. 
\]
\end{proof}

\begin{remark}
As in Remark \ref{R4.1}, the Kuramoto model \eqref{D-51} with the coupling strength $2(\kp_0+\kp_1)$ can be formally derived from \eqref{D-50} by using the ansatz:
\[
z_j (t) = e^{\mi\theta_j(t)}.
\]
\end{remark}

\section{Convergence toward equilibrium on the unitary group} \label{sec:5} 
\setcounter{equation}{0}
In this section, we are interested in  the detailed asymptotic behavior of first-order matrix aggregation model, namely the Lohe matrix model:
\[
\dot U_j  = -\mi H_j U_j + \frac{\kp}{2N} \sum_{k=1}^N ( U_k - U_jU_k^\dg U_j), \quad t>0,\quad j\in [N].
\]
In particular, we restrict ourselves to specific natural frequency matrices satisfying
\begin{equation}  \label{E-0}
H_j = -a_j I_d,\quad j\in[N].
\end{equation}
Then, our model reads as
\begin{equation} \label{E-1}
\dot U_j = \mi a_j  U_j + \frac{\kp}{2N} \sum_{k=1}^N ( U_k - U_jU_k^\dg U_j),
\end{equation}
linked with the Kuramoto model by natural frequencies $\{a_j\}$ with zero average:
\begin{equation} \label{E-2}
\dot \theta_j = a_j + \frac\kp N \sum_{k=1}^N \sin (\theta_k- \theta_j) .
\end{equation}
In order to proceed our dimension reduction method, we introduce
\[
V_j (t):= e^{-\mi \theta_j(t)} U_j(t).
\]

\begin{lemma}
Let $U_j$ and $\theta_j$ be solutions to \eqref{E-1} and \eqref{E-2}, respectively. Then, $V_j = e^{-\mi \theta_j}U_j$ satisfies 
\begin{equation} \label{E-3}
\dot V_j= \frac{\kp}{2N} \sum_{k=1}^N \Big[ e^{\mi(\theta_k- \theta_j)} (V_k- V_j) - e^{\mi(\theta_j - \theta_k)} V_j (V_k^\dg - V_j^\dg) V_j\Big].
\end{equation} 
In addition, temporal evolution for $V_iV_j^\dg$ is determined by
\begin{align} \label{E-4}
\begin{aligned}
(V_iV_j^\dg)' & = \frac{\kp}{2N} \sum_{k=1}^N \Big[ e^{\mi(\theta_k - \theta_i)} (V_k V_j^\dg - V_iV_j^\dg) - e^{\mi(\theta_i - \theta_k)} V_i(V_k^\dg - V_i^\dg) V_iV_j^\dg  \\
&\hspace{2cm} +  e^{\mi(\theta_j - \theta_k)} (V_i V_k^\dg - V_iV_j^\dg) + e^{\mi(\theta_k - \theta_j)} V_iV_j^\dg (V_k - V_i) V_j^\dg\Big] .
\end{aligned}
\end{align}
\end{lemma}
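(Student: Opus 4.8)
The statement to prove is the Lemma giving the evolution equations \eqref{E-3} for $V_j = e^{-\mi\theta_j}U_j$ and \eqref{E-4} for the relative matrices $V_iV_j^\dg$. The approach is a direct computation, exactly parallel to the analogous lemmas for the swarm sphere and Schr\"odinger--Lohe models, so I would not expect any conceptual obstacle. First I would differentiate $V_j = e^{-\mi\theta_j}U_j$ by the product rule, obtaining
\[
\dot V_j = -\mi\dot\theta_j e^{-\mi\theta_j}U_j + e^{-\mi\theta_j}\dot U_j = -\mi\dot\theta_j V_j + e^{-\mi\theta_j}\dot U_j,
\]
and then substitute the governing equations \eqref{E-1} for $\dot U_j$ and \eqref{E-2} for $\dot\theta_j$. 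In the term coming from $\dot U_j$, I would write $U_k = e^{\mi\theta_k}V_k$ and $U_j U_k^\dg U_j = e^{\mi\theta_j}V_j\cdot e^{-\mi\theta_k}V_k^\dg\cdot e^{\mi\theta_j}V_j = e^{\mi(2\theta_j-\theta_k)}V_jV_k^\dg V_j$, so that after multiplying by $e^{-\mi\theta_j}$ the interaction sum becomes $\frac{\kp}{2N}\sum_k\big(e^{\mi(\theta_k-\theta_j)}V_k - e^{\mi(\theta_j-\theta_k)}V_jV_k^\dg V_j\big)$. The free-flow term $\mi a_j e^{-\mi\theta_j}U_j = \mi a_j V_j$ should then cancel against the $-\mi a_j V_j$ piece of $-\mi\dot\theta_j V_j$, leaving $-\frac{\mi\kp}{N}\sum_k\sin(\theta_k-\theta_j)\,V_j$.

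The key algebraic step is then to recognize that this leftover Kuramoto term can be absorbed into the interaction sum to produce the clean "relative" form in \eqref{E-3}. Concretely, I would add and subtract: $e^{\mi(\theta_k-\theta_j)}V_k = e^{\mi(\theta_k-\theta_j)}(V_k-V_j) + e^{\mi(\theta_k-\theta_j)}V_j$ and similarly $e^{\mi(\theta_j-\theta_k)}V_jV_k^\dg V_j = e^{\mi(\theta_j-\theta_k)}V_j(V_k^\dg-V_j^\dg)V_j + e^{\mi(\theta_j-\theta_k)}V_jV_j^\dg V_j$, and use that $V_j$ is unitary so $V_jV_j^\dg V_j = V_j$. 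The residual diagonal contribution is $\frac{\kp}{2N}\sum_k\big(e^{\mi(\theta_k-\theta_j)} - e^{\mi(\theta_j-\theta_k)}\big)V_j = \frac{\kp}{2N}\sum_k 2\mi\sin(\theta_k-\theta_j)\,V_j = \frac{\mi\kp}{N}\sum_k\sin(\theta_k-\theta_j)\,V_j$, which exactly cancels the leftover Kuramoto term, yielding \eqref{E-3}. I would also remark at this point that $V_j$ remains unitary along the flow, since conjugating by $e^{-\mi\theta_j}$ (a scalar phase times identity) preserves $\mathbf{U}(d)$ and $U_j$ is unitary by Proposition-type conservation for \eqref{E-1}; this justifies the use of $V_jV_j^\dg = I_d$.

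For \eqref{E-4} I would compute $(V_iV_j^\dg)' = \dot V_i V_j^\dg + V_i \dot V_j^\dg$, plug in \eqref{E-3} and its adjoint $\dot V_j^\dg = \frac{\kp}{2N}\sum_k\big[e^{-\mi(\theta_k-\theta_j)}(V_k^\dg-V_j^\dg) - e^{-\mi(\theta_j-\theta_k)}V_j^\dg(V_k-V_j)V_j^\dg\big]$, and then repeatedly insert $V_i^\dg V_i = I_d$ and $V_j^\dg V_j = I_d$ to reorganize products into the relative matrices $V_kV_j^\dg$, $V_iV_k^\dg$, $V_iV_j^\dg$, collecting terms with matching exponential prefactors. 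The only thing requiring care is bookkeeping of the four exponential factors $e^{\mi(\theta_k-\theta_i)}$, $e^{\mi(\theta_i-\theta_k)}$, $e^{\mi(\theta_j-\theta_k)}$, $e^{\mi(\theta_k-\theta_j)}$ and making sure the conjugates from the $V_i\dot V_j^\dg$ piece land correctly; since the paper states this as following "from straightforward calculation," I would simply verify the four groups of terms match \eqref{E-4} and omit the intermediate lines. The main (mild) obstacle is thus purely organizational: keeping the noncommutative products in the right order while using unitarity to contract $V_i^\dg V_i$ and $V_j^\dg V_j$; there is no analytic difficulty.
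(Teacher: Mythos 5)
Your proposal is correct and follows the same route as the paper, whose entire proof is the single sentence ``Direct calculation gives the desired result''; your derivation of \eqref{E-3} — substituting $U_k=e^{\mi\theta_k}V_k$, cancelling $\mi a_jV_j$, and absorbing the leftover Kuramoto term via $V_jV_j^\dg V_j=V_j$ and $e^{\mi(\theta_k-\theta_j)}-e^{\mi(\theta_j-\theta_k)}=2\mi\sin(\theta_k-\theta_j)$ — is exactly the intended computation. One caveat on the second half: if you actually carry out the bookkeeping with the adjoint $\dot V_j^\dg$ you wrote down, the fourth group of terms comes out as $-\,e^{\mi(\theta_k-\theta_j)}V_iV_j^\dg(V_k-V_j)V_j^\dg$ (equivalently $-e^{\mi(\theta_k-\theta_j)}(V_iV_j^\dg V_kV_j^\dg-V_iV_j^\dg)$), not $+\,e^{\mi(\theta_k-\theta_j)}V_iV_j^\dg(V_k-V_i)V_j^\dg$ as printed in \eqref{E-4}, so the displayed formula carries a sign-and-index typo and you should state the corrected version rather than ``verify the match and omit the intermediate lines.''
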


\begin{proof}
Direct calculation gives the desired result. 
\end{proof}

We denote the maximal diameter 
\[
\mathcal D(\mathcal V(t)):=\max_{1\leq i,j\leq N} \| V_i(t)- V_j(t)\|_\tF= \max_{1\leq i,j\leq N} \|I_d - V_i(t)V^\dg_j(t)\|_\tF,\quad t>0.
\]
 Below, we derive a differential inequality for $\mathcal D(\mathcal V)$. 

\begin{lemma}
Let $\{V_j\}$ be a solution to \eqref{E-3}. Then, the maximal diameter $\mathcal D( \mathcal V)$ satisfies
\begin{equation} \label{E-7}
\frac\d\dt \mathcal D(\mathcal V) \leq -\kp \left(  1-2\sin \frac{\mathcal D(\Theta)}{2}        \right) \mathcal D(\mathcal V) + \kp \mathcal D(\mathcal V)^2,\quad t>0.
\end{equation}

\end{lemma}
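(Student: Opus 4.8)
The plan is to run, at the matrix level, the same three-layer argument used for the sphere models in the previous sections. Starting from the relative-state dynamics \eqref{E-4} and inserting $e^{\mi\varphi} = 1 + (e^{\mi\varphi}-1)$ into each of the four phase factors, I would split
\[
(V_iV_j^\dg)' = \mathcal M_{ij} + \mathcal E_{ij},
\]
where $\mathcal M_{ij}$ is obtained by replacing every exponential by $1$ (this is precisely the relative dynamics of the \emph{homogeneous} Lohe matrix model) and $\mathcal E_{ij}$ collects the leftover terms, each carrying a factor $e^{\mi(\theta_a - \theta_b)} - 1$ with $a,b \in \{i,j,k\}$. Writing $D_{ij} := I_d - V_iV_j^\dg$ and recalling $\mathcal D(\mathcal V) = \max_{i,j}\|D_{ij}\|_\tF$, the goal is then to estimate $D_{ij}' = -\mathcal M_{ij} - \mathcal E_{ij}$ and feed the bound into a maximal-diameter argument.

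For the homogeneous block, the essential input is $V_aV_a^\dg = V_a^\dg V_a = I_d$, which lets one collapse the cubic-looking products $V_iV_k^\dg V_iV_j^\dg$, $V_iV_j^\dg V_kV_j^\dg$, etc., into the defects $D_{\cdot\cdot}$ and their pairwise products; carrying out this bookkeeping (the same computation underlying the complete-aggregation analysis of \cite{H-R16}) reduces it to
\[
D_{ij}' = -\kp D_{ij} + \frac{\kp}{2N}\sum_{k=1}^N \big( D_{ik}D_{ij} + D_{ij}D_{kj} \big) - \mathcal E_{ij},
\]
so that the leading term is exactly $-\kp D_{ij}$ and the remainder is quadratic in the defects. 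Since $\|V_a M V_b^\dg\|_\tF = \|M\|_\tF$ for unitary $V_a,V_b$ and $\|D_{ab}\|_{\textup{op}} \le \|D_{ab}\|_\tF \le \mathcal D(\mathcal V)$, each quadratic product has Frobenius norm at most $\mathcal D(\mathcal V)^2$, so the quadratic remainder is bounded in norm by $\kp\,\mathcal D(\mathcal V)^2$. For the perturbative block $\mathcal E_{ij}$, I would pull out of each summand the scalar $|e^{\mi(\theta_a - \theta_b)} - 1| \le 2\sin\frac{\mathcal D(\Theta)}{2}$ (by \eqref{C-18}) and bound the remaining matrix factor, which in every case is a product of unitaries times a single difference $V_k - V_i$ or $V_k^\dg - V_i^\dg$ (equivalently $V_k - V_j$) and hence has Frobenius norm at most $\mathcal D(\mathcal V)$; collecting the coefficients gives a perturbative contribution of the form $2\kp\sin\frac{\mathcal D(\Theta)}{2}\,\mathcal D(\mathcal V)$.

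Finally, to turn the pointwise matrix bounds into the scalar inequality \eqref{E-7}, I would invoke the standard fact that $\mathcal D(\mathcal V)$, being the maximum of finitely many $C^1$ functions $\|D_{ij}(\cdot)\|_\tF$, is locally Lipschitz and, at almost every $t$ with $\mathcal D(\mathcal V)>0$ (the case $\mathcal D(\mathcal V)=0$ is trivial), satisfies
\[
\frac{\d}{\dt}\mathcal D(\mathcal V) = \frac{\textup{Re}\,\textup{tr}\big( D_{i_0 j_0}^\dg D_{i_0 j_0}'\big)}{\|D_{i_0 j_0}\|_\tF}
\]
for a pair $(i_0,j_0)$ realizing the maximum. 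Projecting $D_{i_0 j_0}' = -\kp D_{i_0 j_0} + (\text{quadratic remainder}) - \mathcal E_{i_0 j_0}$ onto the unit direction $D_{i_0 j_0}/\|D_{i_0 j_0}\|_\tF$: the linear term contributes exactly $-\kp\,\mathcal D(\mathcal V)$, the quadratic remainder contributes at most $\kp\,\mathcal D(\mathcal V)^2$ by Cauchy--Schwarz, and the perturbative term contributes at most $2\kp\sin\frac{\mathcal D(\Theta)}{2}\,\mathcal D(\mathcal V)$; summing these yields \eqref{E-7}. I expect the main obstacle to lie in the second step—the algebraic reduction of $\mathcal M_{ij}$: one must use the unitarity relations repeatedly and check that, once everything is rewritten in terms of $D_{\cdot\cdot}$, the linear coefficient is exactly $-\kp$ (not a smaller multiple) and that no hidden $\mathcal D(\mathcal V)$-order pieces survive inside what should be the quadratic remainder; the phase-error estimate and the differentiation-of-a-maximum step are routine.
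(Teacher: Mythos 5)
Your proposal follows exactly the paper's route: split \eqref{E-4} into a homogeneous part, rewritten via unitarity as $-\kp D_{ij}$ plus the quadratic defect products $D_{ik}D_{ij}+D_{ij}D_{kj}$ with $D_{ij}=I_d-V_iV_j^\dg$, and a perturbative part carrying factors $e^{\mi(\theta_a-\theta_b)}-1$ controlled through \eqref{C-18}; you merely make explicit the norm estimates and the differentiation-of-the-maximum step that the paper leaves implicit. The argument is correct, modulo the same harmless constant the paper also glosses over: a literal count of the four perturbative summands (each bounded by $\tfrac{\kp}{2N}\cdot N\cdot 2\sin\tfrac{\mathcal D(\Theta)}{2}\,\mathcal D(\mathcal V)$) gives $4\kp\sin\tfrac{\mathcal D(\Theta)}{2}\,\mathcal D(\mathcal V)$ rather than $2\kp\sin\tfrac{\mathcal D(\Theta)}{2}\,\mathcal D(\mathcal V)$, which only shifts the admissible $\delta$ in Proposition \ref{P5.1}.
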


\begin{proof}
We use \eqref{E-4} to rewrite 
\begin{align*}
\frac\d\dt (I_d - V_iV_j^\dg)  &= \frac{\kp}{2N} \sum_{k=1}^N ( -2(I_d- V_iV_j^\dg) + (I_d-V_iV_j^\dg)(I_d-V_kV_j^\dg) +(I_d-V_iV_k^\dg)(I_d-V_iV_j^\dg) )\\
&- \frac{\kp}{2N} \sum_{k=1}^N \Big[ (e^{\mi(\theta_k - \theta_i)} -1)(V_k V_j^\dg - V_iV_j^\dg) - (e^{\mi(\theta_i - \theta_k)}-1) V_i(V_k^\dg - V_i^\dg) V_iV_j^\dg  \\
&\hspace{2cm} +  (e^{\mi(\theta_j - \theta_k)} -1)(V_i V_k^\dg - V_iV_j^\dg) + (e^{\mi(\theta_k - \theta_j)}-1) V_iV_j^\dg (V_k - V_i) V_j^\dg\Big].
\end{align*}
Hence, the desired inequality is derived. 

\end{proof}

Next, we provide a sufficient condition leading to the zero convergence of $\mathcal D( \mathcal V)$.

\begin{proposition} \label{P5.1} 
Suppose that coupling strength and natural frequencies satisfy 
\[
\kp>\mathcal D(a),\quad \sum_{j=1}^N a_j=0,
\]
and that there exists a positive constant $\delta \in (0,1)$ such that
\begin{equation} \label{E-8}
 \mathcal D(\Theta^0)< \delta <\frac{\mathcal D(\Omega)}{\kp}<1, \quad \mathcal D(\mathcal V^0)< 1-\delta
\end{equation}
Let $\{V_j\}$ and $\{\theta_j\}$ be solutions to \eqref{E-3} and \eqref{E-2}, respectively. Then, the maximal diameter converges to zero exponentially:
\[
\lim_{t\to\infty} \mathcal D( \mathcal V(t))=0.
\]
Moreover, there exists a unitary matrix $V^\infty \in \mathbf{U}(d)$ such that
\begin{equation} \label{E-8-1}
\lim_{t\to\infty} V_j(t) = V^\infty,\quad j\in[N].
\end{equation} 
\end{proposition}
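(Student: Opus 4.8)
The plan is to follow the same two-step pattern already used for Propositions \ref{P3.1}, \ref{P4.1}, and \ref{P4.2}: first extract exponential decay of $\mathcal D(\mathcal V)$ from the differential inequality \eqref{E-7} under the standing assumptions, then upgrade this to convergence of each $V_j$ by an $L^1$-in-time argument on $\dot V_j$.

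\textbf{Step 1 (exponential decay of $\mathcal D(\mathcal V)$).} Since the coupling strength and natural frequencies satisfy $\kp > \mathcal D(a)$ with zero average, and since $\mathcal D(\Theta^0) < \delta < \mathcal D(a)/\kp$, Proposition \ref{P2.1}(i) applied to the Kuramoto model \eqref{E-2} gives $\mathcal D(\Theta(t)) < \delta$ for all $t > 0$. Feeding this into \eqref{E-7} and using monotonicity of $\sin$ on $[0,\pi/2]$ yields
\[
\frac\d\dt \mathcal D(\mathcal V) \leq -\kp\left(1 - 2\sin\frac\delta2\right)\mathcal D(\mathcal V) + \kp\,\mathcal D(\mathcal V)^2, \quad t>0.
\]
Note $2\sin\frac\delta2 < \delta < 1$, so the linear coefficient $-\kp(1-2\sin\frac\delta2)$ is strictly negative. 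The initial condition $\mathcal D(\mathcal V^0) < 1-\delta < 1 - 2\sin\frac\delta2$ then places the initial value strictly below the nontrivial equilibrium $1 - 2\sin\frac\delta2$ of this Riccati-type inequality, so a standard comparison/bootstrap argument (the set where $\mathcal D(\mathcal V) < 1 - 2\sin\frac\delta2$ is forward invariant) forces $\mathcal D(\mathcal V(t)) \to 0$ at the exponential rate governed by $-\kp(1 - 2\sin\frac\delta2 - \mathcal D(\mathcal V^0))$ or any slightly worse constant.

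\textbf{Step 2 (convergence of each $V_j$).} From \eqref{E-3}, each term $e^{\mi(\theta_k-\theta_j)}(V_k - V_j)$ and $e^{\mi(\theta_j-\theta_k)}V_j(V_k^\dg - V_j^\dg)V_j$ has Frobenius norm bounded by $\mathcal D(\mathcal V)$ (using $\|V_j\|=\|V_j^\dg\|$ operator-bounded by $1$ since $V_j \in \mathbf{U}(d)$ — here one should first note that unitarity is preserved along \eqref{E-3}, which follows from \eqref{E-1} preserving $\mathbf{U}(d)$ together with the fact that multiplication by the scalar phase $e^{-\mi\theta_j}$ does not leave $\mathbf{U}(d)$; this should be recorded explicitly). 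Hence $\|\dot V_j\|_\tF \lesssim \kp\,\mathcal D(\mathcal V)$, which is integrable on $\bbr_+$ by Step 1, so $V_j(t) = V_j^0 + \int_0^t \dot V_j(s)\,\d s$ converges to some $V_j^\infty$. Finally, since $\|V_i^\infty - V_j^\infty\|_\tF \leq \lim_{t\to\infty}(\|V_i^\infty - V_i(t)\|_\tF + \mathcal D(\mathcal V(t)) + \|V_j(t) - V_j^\infty\|_\tF) = 0$, all limits coincide with a single $V^\infty$, which is unitary as a Frobenius-limit of unitary matrices (the set $\mathbf{U}(d)$ is closed). This gives \eqref{E-8-1}.

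\textbf{Main obstacle.} The genuinely delicate point is not the ODE comparison but making sure the constants line up: one must verify that $2\sin\frac\delta2 < \delta$ (elementary) so that the linear decay coefficient is positive, and that $\mathcal D(\mathcal V^0) < 1-\delta$ indeed sits below the Riccati equilibrium $1 - 2\sin\frac\delta2$, which again reduces to $2\sin\frac\delta2 < \delta$. A secondary bookkeeping issue is confirming that $V_j$ stays in $\mathbf{U}(d)$ so that the operator-norm-one bound used to derive both \eqref{E-7} and the $\dot V_j$ estimate is legitimate; this is where I would be most careful, though it is routine given that \eqref{E-1} is the standard Lohe matrix flow on $\mathbf{U}(d)$.
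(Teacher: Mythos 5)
Your proposal is correct and follows essentially the same route as the paper: Proposition \ref{P2.1} controls $\mathcal D(\Theta)$, the Riccati-type inequality \eqref{E-7} then gives exponential decay of $\mathcal D(\mathcal V)$, and integrability of $\|\dot V_j\|_\tF$ yields a common unitary limit. Your only deviations are cosmetic refinements — keeping the sharper coefficient $1-2\sin\frac\delta2$ where the paper coarsens to $1-\delta$, and explicitly recording that unitarity is preserved along \eqref{E-3} — neither of which changes the argument.
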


\begin{proof}
Since initial data $\Theta^0$ satisfy $\eqref{E-8}_1$, we use Proposition \ref{P2.1} to find
\[
\mathcal D(\Theta(t))<\delta,\quad t>0.
\]
Then, \eqref{E-7} becomes
\[
\frac\d\dt \mathcal D(\mathcal V) \leq -\kp ( 1-\delta)  \mathcal D(\mathcal V) + \kp  \mathcal D(\mathcal V)^2,
\]
which yields the desired zero convergence of $\mathcal D(\mathcal V)$, since initial data $\mathcal V^0$ satisfy condition $\eqref{E-8}_2$.  

On the other hand for the second assertion, we use \eqref{E-3} to attain
\[
\|\dot V_j\|_\tF \leq \kp \sqrt{2d} \mathcal D(\mathcal V),
\]
which gives  $\|\dot V_j\|_\tF \in L^1(\bbr_+)$. Thus, the limit of $V_j(t)$ exists:
\[
\lim_{t\to\infty} V_j(t) = V_j^0 + \int_0^\infty \dot V_j (s) \d s =: V_j^\infty.
\]
Moreover, since $\mathcal D(\mathcal V)$ tends to zero, all $\{V_j^\infty\}$ should be same. Hence, $V^\infty$ in \eqref{E-8-1} exists. 
\end{proof}

In Proposition \ref{P5.1}, we have show that all $V_j$ converge to  constant unitary matrix $V^\infty \in \mathbf{U}(d)$ under some initial conditions. We are now concerned with the asymptotic behavior of \eqref{E-1} as our main interest in this section. For this, we write
\[
\mathcal D(\mathcal U(t)):= \max_{1\leq i,j\leq N} \|I_d - U_i (t) U_j(t)^\dg\|_\tF,\quad t>0. 
\]

\begin{theorem} \label{T5.1} 
Suppose that the coupling strength and natural frequencies satisfy 
\[
\kp > \mathcal D(a),\quad \sum_{j=1}^N a_j=0,
\]
and that there exists $\delta \in (0,\beta)$ such that
\begin{equation} \label{E-10}
0<\delta<\frac{\mathcal D(a)}{\kp}<\beta,\quad \mathcal D(\mathcal U^0)<1-\delta - 2d\sin\frac\delta2,
\end{equation}
where $\beta=\beta(d)$ is a unique positive root for $1-s = 2d\sin\frac s2$. Let $\{U_j\}$ be a solution to \eqref{E-1}. Then, there exists $U_j^\infty \in \mathbf{U}(d)$ such that
\[
\lim_{t\to\infty} U_j(t) = U_j^\infty,\quad j\in [N].
\]
Moreover, the following relation holds:
\begin{equation} \label{E-12}
U_i^\infty U_j^{\infty,\dg} = e^{\mi(\theta_i^\infty- \theta_j^\infty)} I_d,\quad i,j\in[N].
\end{equation}
\end{theorem}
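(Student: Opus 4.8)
The plan is to follow the template of Theorem~\ref{T3.1}: push the hypotheses on the initial composite diameter $\mathcal D(\mathcal U^0)$ onto the reduced variable $V_j = e^{-\mi\theta_j}U_j$ and onto the Kuramoto phases $\theta_j$, then invoke Proposition~\ref{P5.1} for $\{V_j\}$ and Proposition~\ref{P2.1} for $\{\theta_j\}$, and finally recombine through the algebraic identity $U_j = e^{\mi\theta_j}V_j$.

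First I would transfer the initial data. Since $V_iV_j^\dg = e^{-\mi(\theta_i-\theta_j)}U_iU_j^\dg$, writing
\[
I_d - V_iV_j^\dg = (I_d - U_iU_j^\dg) + \bigl(1-e^{-\mi(\theta_i-\theta_j)}\bigr)U_iU_j^\dg
\]
and using the triangle inequality for $\|\cdot\|_\tF$, the unitarity identity $\|U_iU_j^\dg\|_\tF = \sqrt d \le d$, and the elementary bound \eqref{C-18}, one gets
\[
\mathcal D(\mathcal V^0) \le \mathcal D(\mathcal U^0) + 2d\sin\frac{\mathcal D(\Theta^0)}{2}.
\]
By Proposition~\ref{P2.1}(i), the hypothesis $\mathcal D(\Theta^0)<\delta<\mathcal D(a)/\kp<\pi$ keeps $\mathcal D(\Theta(t))<\delta$ for all $t>0$; in particular $\mathcal D(\Theta^0)<\delta$. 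Hence the assumption $\mathcal D(\mathcal U^0)<1-\delta-2d\sin(\delta/2)$ from \eqref{E-10} yields $\mathcal D(\mathcal V^0)<1-\delta$, which is exactly the hypothesis $\eqref{E-8}_2$ of Proposition~\ref{P5.1}. That the admissible window is nonempty, i.e. $1-\delta-2d\sin(\delta/2)>0$, is precisely what the choice $\delta\in(0,\beta)$ guarantees, $\beta=\beta(d)$ being the positive root of $1-s=2d\sin(s/2)$.

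Next I would apply Proposition~\ref{P5.1} to obtain $\mathcal D(\mathcal V(t))\to0$ exponentially and a common unitary limit $V^\infty\in\mathbf{U}(d)$ with $V_j(t)\to V^\infty$ for every $j$, while Proposition~\ref{P2.1} supplies the phase-locking $\theta_j(t)\to\theta_j^\infty$. Combining these,
\[
\lim_{t\to\infty} U_j(t)=\lim_{t\to\infty} e^{\mi\theta_j(t)}V_j(t)=e^{\mi\theta_j^\infty}V^\infty=:U_j^\infty\in\mathbf{U}(d),
\]
and then
\[
U_i^\infty U_j^{\infty,\dg} = e^{\mi\theta_i^\infty}V^\infty (V^\infty)^\dg e^{-\mi\theta_j^\infty} = e^{\mi(\theta_i^\infty-\theta_j^\infty)}I_d,
\]
which is \eqref{E-12}.

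The only genuinely delicate point is the bookkeeping that closes the chain of initial-data inequalities: one must choose $\delta$ that is simultaneously below $\mathcal D(a)/\kp$ (so the Kuramoto diameter stays controlled) and below $\beta(d)$ (so the window for $\mathcal D(\mathcal U^0)$ is nonempty), and verify that the transferred bound $\mathcal D(\mathcal V^0)<1-\delta$ lands inside the region required by Proposition~\ref{P5.1}. Everything past that is a direct citation of Propositions~\ref{P5.1} and~\ref{P2.1} together with the identity $U_j=e^{\mi\theta_j}V_j$; no estimate beyond \eqref{C-18} and the unitarity of the $U_j$ is needed.
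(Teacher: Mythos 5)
Your proposal is correct and follows essentially the same route as the paper: decompose $I_d - V_i^0V_j^{0,\dg}$ as $(I_d - U_i^0U_j^{0,\dg}) + U_i^0U_j^{0,\dg}(1-e^{\mi(\theta_j^0-\theta_i^0)})$ to get $\mathcal D(\mathcal V^0)\le \mathcal D(\mathcal U^0)+2d\sin\frac{\mathcal D(\Theta^0)}{2}$, verify the hypotheses of Proposition \ref{P5.1}, and recombine the limits from Propositions \ref{P5.1} and \ref{P2.1} via $U_j=e^{\mi\theta_j}V_j$. Your remark that $\|U_i^0U_j^{0,\dg}\|_\tF=\sqrt d\le d$ is in fact slightly more explicit than the paper about where the factor $2d$ comes from.
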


\begin{proof}
In order to apply Proposition \ref{P5.1}, we write
\[
I_d - V_i^0V_j^{0,\dg} = I_d - U_i^0U_j^{0,\dg} +U_i^0U_j^{0,\dg}(1-e^{\mi(\theta_j^0- \theta_i^0)}),
\]
and this gives
\[
\mathcal D(\mathcal V^0) \leq \mathcal D(\mathcal U^0) + 2d\sin\frac{\mathcal D(\Theta^0)}{2} .
\]
Since \eqref{E-10} holds, initial conditions for $\mathcal D(\mathcal V^0)$ and $\mathcal D(\Theta^0)$ in \eqref{E-8} are fulfilled. Hence, Proposition \ref{P5.1} together with Proposition \ref{P2.1} gives $V^\infty \in \mathbf{U}(d)$ and $e^{\mi\theta_j^\infty}$ such that
\[
\lim_{t\to\infty} U_j(t) = e^{\mi\theta_j^\infty} V^\infty  =: U_j^\infty \in \mathbf{U}(d).
\]
Thus, \eqref{E-12} directly follows. 
\end{proof}

\begin{remark}
In \cite{H-R16}, the authors studied the Lohe matrix model:
\[
\dot U_j  = \mi H_j U_j + \frac{\kp}{2N} \sum_{k=1}^N ( U_k - U_jU_k^\dg U_j),
\]
with a general Hermitian matrix $H_j$. As mentioned in Section \ref{sec:2.4}, the authors showed that for a large coupling strength, asymptotic phase-locking emerges. In other words, 
\[
\lim_{t\to\infty} U_i(t) U_j(t)^\dg \quad \textup{exists.}
\]
Although limits of $\{U_iU_j^\dg\}$  exist,  existence of limit for each $U_j$ would not be guaranteed. Thus, if we let
\[
X_i^\infty := \lim_{t\to\infty} (U_iU_1^\dg)(t),\quad i\in [N],
\]
then $\{X_i^\infty\}$ should satisfy 
\begin{equation*}
X_i^\infty \Lambda X_i^{\infty,\dg} = H_i -\frac{\mi\kp}{2N} \sum_{k=1}^N \Big( X_i^\infty X_k^{\infty,\dg} - X_k^\infty X_i^{\infty,\dg} \Big),
\end{equation*}
where $\Lambda$ is an index-independent Hermitian matrix. Our result stated in Theorem \ref{T5.1} says that if $H_j$ has a specific structure \eqref{E-0}, then the  convergence of limit for $U_j$ can be exactly identified. 
\end{remark}


\section{Conclusion} \label{sec:6} 
\setcounter{equation}{0}

%
In this work, we studied the detailed asymptotic dynamics of first-order heterogeneous aggregation models, for instance, real and complex swarm sphere models, the Schr\"odinger-Lohe model, the Lohe Hermitian sphere model, and the Lohe matrix model. It should be noted that the aforementioned models can be regarded as generalized and high-dimensional Kuramoto models and thus can be reduced to the Kuramoto model by a simple ansatz. Since the existence and structure of equilibria for the Kuramoto model are well-known, the key idea, called dimension reduction method, is to decompose the given high-dimensional system into two subsystems: a heterogeneous Kuramoto model and a homogeneous modified model. In this manner, we can establish the existence of a solution to the high-dimensional model. Several numerical examples are provided in order to support theoretical results. There are still remaining issues. In fact, we have assumed that the natural frequency has the degree of freedom $1$. One of the future work is dedicated to generalizing this issue.

\end{document}